\documentclass[11pt,reqno]{article}

\topmargin=-0.5truein
\evensidemargin=-0.15truein
\oddsidemargin=-0.15truein
\textheight=9.2truein
\textwidth=6.7truein

\usepackage{appendix, amsmath, amssymb, amsthm, color, fancyhdr, graphicx, latexsym, marvosym, mathtools, multirow, psfrag, yhmath}

\numberwithin{equation}{section}
\newtheorem{thm}{Theorem}[section]
\newtheorem{defn}[thm]{Definition}
\newtheorem{prop}[thm]{Proposition}
\newtheorem{lemma}[thm]{Lemma}
\newtheorem{remark}[thm]{Remark}

\newcommand{\N}{\mathbb{N}}
\newcommand{\Z}{\mathbb{Z}}

\newcommand{\R}{\mathbb{R}}
\newcommand{\T}{\mathbb{T}}
\newcommand{\C}{\mathbb{C}}

\newcommand{\X}{\mathcal{X}}
\newcommand{\Y}{\mathcal{Y}}

\newcommand{\FF}{\mathcal{F}}
\newcommand{\HH}{\mathcal{H}}

\DeclareMathOperator{\supp}{supp}

\setlength{\parindent}{.3in}

\setlength{\baselineskip}{16pt}
\pagestyle{myheadings}
\date{}

\begin{document}

\title{Non-homogeneous Problems for Nonlinear Schr\"odinger Equations in a Strip Domain}

\author{Yu Ran$^1$ and Shu-Ming Sun$^2$ \\
{\small $^1$Department of Mathematics} \\
{\small China Jiliang University} \\
{\small 258 Xueyuan Rd., Hangzhou 310018, China} \\
{\small Email: ryan\_ranyu@live.com} \\
{\small $^2$Department of Mathematics} \\
{\small Virginia Polytechnic Institute and State University} \\
{\small Blacksburg, Virginia 24061, USA} \\
{\small Email: sun@math.vt.edu}
}

\baselineskip=15pt

\maketitle

\begin{abstract}
    This paper studies the initial-boundary-value problem (IBVP) of a nonlinear Schr\"odinger equation posed on a strip domain $\R\times[0,1]$ with non-homogeneous Dirichlet boundary conditions. For any $s\ge0$, if the initial data $\varphi(x,y)$ is in Sobolev space $H^s(\R\times[0,1])$ and the boundary data $h(x,t)$ is in
    $$
    {\cal H}^s (\R ) = \left \{ h (x, t) \in L^2 ( \R^2 ) \  \big | \  ( 1 + |\lambda | + |\xi|)^{\frac12} ( 1+ |\lambda | + |\xi |^2 )^{\frac{s}{2}}\hat h ( \lambda, \xi  ) \in L^2 (\R^2 ) \right \}
    $$
    where $\hat h $ is the Fourier transform of $h$ with respect to $t$ and $ x$, the local well-posedness of the IBVP in $C([0,T]; H^s(\R \times [0,1]))$ is proved. The global well-posedness is also obtained for $s = 1$.
    The basic idea used here relies on the derivation of an integral operator for the non-homogeneous boundary data and the proof of the series version of Strichartz's estimates for this operator. After the problem is transformed to finding a fixed point of an integral operator, the contraction mapping argument then yields a fixed point using the Strichartz's estimates for initial and boundary operators. The global well-posedness is proved using {\it a-priori} estimates of the solutions.

\end{abstract}

\allowdisplaybreaks

\section{Introduction}\label{NLSE_IB_2d_stp_Intro_sec}

\setcounter{equation}{0}

    In this paper, we focus on the initial-boundary-value problem (IBVP) of a nonlinear Schr\"odinger (NLS) equation on a strip domain $\R \times [0,1]$,
    \begin{equation}\label{1.1} 
        \left\{
        \begin{array}{ll}
            i u_t + u_{xx} + u_{yy} + \lambda |u|^{p-2}u = 0 \, \text{, } & (x,y,t) \in \R \times (0,1) \times \R\, ,  \\
            u(x,y,0) = \varphi(x,y) \, \text{, } & (x,y) \in \R \times [0,1] \,   ,\\
            u(x,0,t)=h_1(x,t) \, \text{, } u(x,1,t)=h_2(x,t) \, \text{, } & (x,t) \in \R \times \R \, ,
        \end{array}
        \right.
    \end{equation}
    where $\lambda\in\R$ and $p\ge3$ is a constant (although our attention will focus on the case of $p\ge3$, many of the results in this paper are valid for $p>2$).

    During last 40 years, the NLS equations have been used as model equations to many physical applications and become an essential part in the field of physics, mechanics and mathematics whose solutions describe the wave propagation spreading out in space as they evolve in time.
    Here, we mainly consider the mathematical perspective of the IBVP (\ref{1.1}) and concentrate on the \emph{well-posedness} of (\ref{1.1}) in the Sobolev space $H^s(\R\times(0,1))$.

The mathematical study of NLS equations was first accomplished for a pure initial value problem (IVP), i.e., for $(x,y)\in\R^2$
\begin{equation} \label{1.2}
        iu_t +  u_{xx} + u_{yy} + \lambda |u|^{p-2}u = 0, \ u(x,y,0) = \varphi (x,y), \qquad (x, y)  \in \R^2, \ t \in \R \, ,
\end{equation}
where $(x, y)$ can be replaced by $\vec x\in \R^n$ or $\vec x \in \T^n$ with $\T^n$ an $n$-dimensional torus. Numerous new ideas, methods and mathematical tools for studying the IVPs of the NLS equations have  been developed and the most of literatures on this subject have been concerned with the basic well-posedness question, i.e., the existence, uniqueness, and continuous dependence of solutions with respect to the initial data in the corresponding Sobolev spaces. In particular, significant progress has been made recently for the well-posedness of the problem with low regularity of solutions, which was pioneered by Bourgain \cite{Bourgain_Exp_NLSE, Bourgain_FrRc_NLSE} for his study on the NLS equations in periodic domains. He
developed a method with harmonic analysis analogous to Strichartz's estimates for studying this problem and obtained its global well-posedness \cite{Bourgain_FrRc_NLSE}. The research for the IVPs in other domains with some periodic (in spatial
variables) structures can be found in
\cite{Bourgain_NLSE_Periodic_IM, Ion_NLS_GLB_RT3, Ion_NLS_T3, Takaoka_NLSE_2d_Periodic} and the references therein. In particular, Takaoka and Tzvetkov \cite{Takaoka_NLSE_2d_Periodic} studied a two-dimensional IBVP for the solution with $(x, y) \in \R \times \T$ and proved that the equation
$$
i \partial_t u + u_{xx}+ u_{yy} + |u|^{p-2}u = 0\, ,\qquad u(x, y, 0) = \varphi(x, y)
$$
is globally well-posed for $2<p<4$ on $\R\times\T$ with $\varphi\in L^2(\R\times \T)$ and is globally well-posed for $p=4$ with $\|\varphi\|_{L^2(\R\times\T)}$ sufficiently small.
A very small sample of other excellent papers on the IVPs of NLS equations can be found in \cite{Bourgain_NLSE_Glb, Cazenave_NLSE, Caz_CD_frac, Caz_Wei_NLSE_CP_crt_Hs, Fang_NLSE_CP_Hs, Ginibre_CP_G, Kato_NLSE, Kato_NLSE_lec, Kato_NLSE_UC_Hs, TsutsumiY_NLSE_L2} and the references therein, where the methods of nonlinear functional
analysis and harmonic analysis have been used and Strichartz's estimate (see \cite{Stri_Frr_wave}) plays an important role in the study. The book by Cazenave \cite{Cazenave_NLSE} is a terrific reference into the literatures on this subject.

In contrast to the IVPs of the NLS equations, a less extent of progress to the study of the IBVP (\ref{1.1})
with non-homogeneous boundary conditions was made in a number of literatures (e.g., see \cite{Aud_Disp, Aud_se_nbvp, Gallouet_NLSE, Bu_NLSE_IB_Cubic_PDE, Bu_NLSE_IB_WP, Bu_NLSE_IB_Rbn, Bu_NLSE_Cubic_Gen, Bu_NLSE_IB_WP_DC_BU, Bu_NLSE_IB_IH_Diri, Holmer_NLSE_1d, Kamvissis_IST_HL, Ran_NLSE_2d_uhp, Bu_NLSE_IB_IH, TsutsumiM_NLSE_IB_H0_2d, TsutsumiM_NLSE_IB_GLB_Nd, TsutsumiY_NLSE_H2} and the references therein), using the method of nonlinear functional analysis (harmonic analysis). The well-posedness of one-dimensional IBVP (\ref{1.1}) over a finite interval $[0,L]$ with solutions in $C([0,T]; H^s[0,L])$ for $s\ge0$ has been addressed in \cite{Sun_NLSE_1d} using boundary integral operator method. It showed that
\begin{equation}\label{1.3} 
    i u_t + u_{xx} + \lambda |u|^{p-2}u = 0 \, , u(x,0)=\varphi(x) \, , \, u(0,t)=h_1(t) \, , \, u(L,t)=h_2(t)
\end{equation}
is globally well-posed when $\varphi\in H^s$ and $h_1$ and $h_2$ are both in $H^{\frac{s+1}{2}}$ with $3\le p \le \frac{6-4s}{1-2s}$ if $0\le s < 1/2$ or $3\le p < \infty$ if $s>1/2$.

In this paper, we study the IBVP (\ref{1.1}) for its local and global well-posedness in $H^s (\R \times [0,1]) $ for $s \ge 0$ with appropriate initial and boundary conditions by applying the similar strategy and method in \cite{Sun_NLSE_1d, Ran_NLSE_2d_uhp} and try to level the results up to those for the IBVP (\ref{1.3}). In order to have the solution of (\ref{1.1}) in the space $C([0,T]; H^s (\R\times[0,1]))$ with $s\ge0$, the initial value $\varphi(x,y)$ is chosen in $H^s(\R \times [0,1])$, but the choice of the function spaces for the boundary data $h(x,t)$ needs some discussion. If we let $\widehat{w}$ stand for the Fourier transform of $w(x, t) $ with respect to both $t$ and $x$, it has been shown from the initial value problem (\ref{1.2}) in \cite{Ran_NLSE_2d_uhp} that the optimal space for the boundary data $h(x, t)$ is
$$
    ( 1 + |\lambda - \xi^2|)^{1/4}  (1+ |\lambda| +|\xi|^2)^{\frac{s}{2}} \hat h ( \xi , \lambda ) \in L^2 (\R^2 )\, ,
$$
where $\hat h $ is the Fourier transform of $h$ with respect to $t$ and $ x$.
Here, we may use a slightly more restrictive space
$$( 1 + |\lambda|+ |\xi|^2)^{\frac{2s+1}{4}} \hat h ( \xi , \lambda ) \in L^2 (\R^2 )\, ,$$
for $h_1$ and $h_2$ in (\ref{1.1}) if they are extended to $\R^2$. However, as discussed in \cite{Sun_NLSE_1d}, for the NLS equations posed in a finite domain, one can show that it is necessary to impose more regularity conditions on $h (x , t)$ with respect to $t$ in order for the solutions to be in $C([0,T]; H^s)$. Based upon the function space used in \cite{Sun_NLSE_1d}, we let
    \begin{align*}
        {\cal H}^s (\R ) = & \left \{ w (x, t) \in L^2 ( \R^2 ) \  | \  ( 1 + |\lambda | + |\xi|)^{\frac12} ( 1+ |\lambda | + |\xi |^2 )^{\frac{s}{2}}\hat w ( \lambda, \xi  ) \in L^2 (\R^2 )\right \}\\
        & \mbox{with } \  \| w\|_{{\cal H}^s (\R)} = \left \| ( 1 + |\lambda | + |\xi|)^{\frac12} ( 1+ |\lambda | + |\xi |^2 )^{\frac{s}{2}}\hat w ( \lambda, \xi  ) \right \|_{ L^2 (\R^2 )} \, ,
    \end{align*}
    and assume that $h_1 (x, t)$ and $h_2 (x,t ) $ belong to the space
    \begin{align*}
        {\cal H}^s (0, T) :=  & \left \{ h (x, t ) \in  L^2 ( \R \times [0, T] ) \  |  \  \mbox{$\exists $ $w\in {\cal H }^s (\R )$ as an extenion of $h$ in $\R^2$  } \right \}  \,  \nonumber \\
        & \mbox{with } \  \| h\|_{{\cal H}^s (0, T)} = \inf \left \{ \| w \|_{{\cal H}^s (\R)} \ | \ w \mbox{ is an extnsion of $h$  in } \R^2\right \}   \, ,\label{NLSE_IB_2d_stp_bdyspT}
    \end{align*}
    so that we can study the solutions of (\ref{1.1}) in $ C([0, T];H^s(\R\times[0,1]))$. Note that we need $1/4$ more derivative on $t$. It will be shown that for $s=0$, the half-derivative on $t$ for $h_1$ and $h_2$ is optimal.

    To study the solutions in fractional Sobolev spaces, the following definition of the well-posedness for the IBVP (\ref{1.1}) is natural.
    \begin{defn}[Well-posedness] \label{WP_defn}
        For any given $s \in \R$ and $T > 0$, the IBVP (\ref{1.1}) is locally well-posed in $H^s (\R \times [0,1])$ if for any constant $r>0$ there is a $T^*\in(0,T]$ such that for $\varphi \in H^s (\R \times [0,1])$ and $h_j \in {\cal H}^s (0,T)$, $j=1$, $2$, satisfying
        $$\|\varphi\|_{H^s (\R \times [0,1])} + \sum_{j=1,2} \|h_j\|_{{\cal H}^s (0,T)} \le r $$
        and some compatibility conditions, the IBVP (\ref{1.1}) has a unique solution in $C([0,T^*]; H^s (\R \times [0,1]))$, which continuously depends upon $(\varphi, h)$ in the corresponding spaces. If $T^*$ can be chosen independently of $r$, then the IBVP (\ref{1.1}) is globally well-posed.
    \end{defn}

    For the low regularity (small $s$), we need clarify the meaning of solutions of (\ref{1.1}) with the initial and boundary conditions:
    \begin{defn} \label{gen_soln}
        For $s<2$ and $T>0$, let $\varphi \in H^s (\R \times [0,1])$ and $h_j \in{\cal H}^s (0,T)$, $j=1$, $2$. Then $u(x,y,t)$ is called a mild solution of the IBVP (\ref{1.1}) if there is a sequence
        $$u_n \in C ([0,T]; H^2 (\R \times [0,1])) \cap C^1 ([0, T]; L^2 (\R \times [0,1]))\, ,\quad n = 1,2, \dots , $$
        satisfying the following properties:
        \begin{enumerate}
            \item $u_n$ is a solution of (\ref{1.1}) in $L^ 2 (\R \times [0,1])$ for $0 \le t \le T$;
            \item $u_n \rightarrow u $ in $C ([0,T]; H^s (\R \times [0,1]))$ as $n \rightarrow \infty$;
            \item $u_n(x,y,0) \rightarrow \varphi(x,y)$ in $H^s (\R \times [0,1])$ and $u_n(x,0,t) \rightarrow h_1(x,t)$ in ${\cal H}^s (0,T)$, $u_n(x,1,t) \rightarrow h_2(x,t)$ in ${\cal H}^s (0,T)$,  as $n \rightarrow \infty$.
        \end{enumerate}
    \end{defn}

    To study the well-posedness of the IBVP (\ref{1.1}), we denote $W^{s,r} (\R \times [0,1])$ as the classical Sobolev space in $L^r$-norm. Thus, $W^{s,2} = H^s$. Moreover, the following concept is required for later use:
    \begin{defn}
        $X^{\sigma,s}$ denotes a Bourgain space over $\R \times \T\times \R$ by
        \begin{equation*}\label{Bourgain_RT_defn} 
            X^{\sigma,s} = \left\{u \left| \, \|u\|_{X^{\sigma,s}} = \left\|e^{it\Delta}u \right\|_{H^{\sigma}_t(H^s_{x, y})} < \infty \right. \right\} \, ,
        \end{equation*}
        where $\T$ stands for the torus $\R/\Z$ and $e^{-it\Delta}$ is the Schr\"odinger operator defined by
        \begin{equation*}\label{NLSE_IB_2d_stp_loc_lin_op}
            e^{-it\Delta}\phi = \int_{-\infty}^{\infty} \sum_{n\in\Z} e^{-i\left(\xi^2+n^2\right)t+i(x\xi+yn)} \widehat{\phi}(\xi,n) \, d\xi \, .
        \end{equation*}
        Here, the Bourgain norm can also be written by
        \begin{equation*}\label{Bourgain_RT_norm}
            \|u\|_{X^{\sigma,s}} = \left\{\int_{-\infty}^{\infty} \int_{-\infty}^{\infty} \sum_{n\in\Z} \left(1+|\xi|+|n|\right)^{2s} \left(1+\left|\lambda+\xi^2+n^2\right|\right)^{2\sigma} \left|\widehat{u}(\xi,n,\lambda)\right|^2 \, d\xi \, d\lambda \right\}^{\frac{1}{2}}\, .
        \end{equation*}
    \end{defn}
    The Bourgain space restricted on a finite time interval $[0, T]$ is defined by
    \begin{align*}
        X^{\sigma,s}(0, T) :=  & \left\{ u(x,t) \in  L^2(\R \times \T \times [0,T] ) \  |  \  \mbox{$\exists \, \tilde{u}\in X^{\sigma,s}$ as an extenion of $u$ in $\R \times \T \times \R$} \right\}  \,  \nonumber \\
        & \mbox{with } \  \|u\|_{X^{\sigma,s}(0, T)} = \inf \left\{ \|\tilde{u}\|_{X^{\sigma,s}} \ | \ \tilde{u} \mbox{ is an extnsion of $u$  in } \R \times \T \times \R \right\} \, ,
    \end{align*}
    and sometimes we may use notation $X^{\sigma,s}$ for $X^{\sigma,s}(0, T)$ when no confusion arises.

The main result in this paper can be stated as follows.
\begin{thm}[Main Theorem] \label{thm_main} For given $s\ge0$, $T>0$ and $\mu>0$, assume $\varphi \in H^s (\R \times [0,1])$ and $h_j \in {\cal H}^s(0,T)$, $j=1$, $2$  satisfying
        $$\| \varphi\|_{H^s (\R \times [0,1])} + \sum_{j=1,2} \|h_j\|_{{\cal H}^s (0,T)} \le \mu \, $$
        and some natural compatibility conditions.
        \begin{enumerate}
        \item If $0 \le s < \frac{1}{2}$ with $3 \le p \le 4$ or $\frac{1}{2} \le s <1$ with $3 \le p \le \frac{3-2s}{1-s}$ or $s=1$ with $3 \le p < \infty$, (\ref{1.1}) is conditionally locally well-posed in $C\left(\R; H^s(\R \times (0,1)) \right)$ with
            \begin{equation}\label{1.4} 
                u\in L^r_t \left([0,T]; \, W^{s,r}_{xy}(\R \times [0,1])\right) \, \text{for } r\in[2,4]\, .
            \end{equation}
        \item If $s > 1$ and $3 \le p < \infty$ satisfying that
         \begin{equation}
         \mbox{$p$ is even, or if $p$ is not even, either $s \le p-1$  for $s\in\Z$ or $[s]\le p-2$ for $s\notin\Z$, }\label{1.4.1}
         \end{equation}
         (\ref{1.1}) is unconditionally locally well-posed. Here $[s]$ is the largest integer less than $s$.
        \item If $0\le s\le 1$ is given, the condition (\ref{1.4}) can be removed, and therefore the  well-posedness is unconditional.
        \item (\ref{1.1}) is globally well-posed in $H^1(\R \times [0,1])$ for $\varphi\in H^1(\R \times [0,1])$ and $$h_j \in H^1_{loc} \left(\R; \, L^2(\R)\right) \cap L^2_{loc} \left(\R; \, H^2(\R)\right), \, j=1, \, 2 \, , $$ if either $p\ge3$ for $\lambda<0$ or $p=3$ for $\lambda>0$.
        \end{enumerate}
    \end{thm}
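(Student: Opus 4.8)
The plan is to recast the IBVP (\ref{1.1}) as an equivalent integral equation and solve it by a contraction argument, treating the four assertions as successive refinements of one fixed-point scheme. First I would extend the boundary data: choose $w_1,w_2\in{\cal H}^s(\R)$ extending $h_1,h_2$ with nearly minimal norm, and seek the solution in the form
$$
u \;=\; e^{it\Delta}\varphi \;+\; \mathcal{B}(w_1,w_2) \;+\; \mathcal{D}\!\left(\lambda|u|^{p-2}u\right),
$$
where $e^{it\Delta}$ is the free Schr\"odinger group on the strip (realized, via Fourier series in $y$, as a group on $\R\times\T$ after an odd periodic extension in $y$), $\mathcal{D}g(t)=\int_0^t e^{i(t-\tau)\Delta}g(\tau)\,d\tau$ is the Duhamel operator, and $\mathcal{B}(w_1,w_2)$ is a boundary integral operator producing the solution of $iu_t+u_{xx}+u_{yy}=0$ with zero initial data and Dirichlet traces $w_1$ at $y=0$, $w_2$ at $y=1$. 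The operator $\mathcal{B}$ is built explicitly: the Fourier transform in $x$ and $t$ reduces the equation to a two-point boundary-value ODE in $y$, whose solution, after inversion, is a superposition over the transverse modes $n\in\Z$; this is the series representation whose estimates drive the argument.

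\textbf{Linear estimates.} The technical heart is a family of bounds for these three pieces, measured in the solution space $\mathcal{Z}^s_T:=C([0,T];H^s(\R\times[0,1]))\cap\bigcap_{r\in[2,4]}L^r_t([0,T];W^{s,r}_{xy}(\R\times[0,1]))$ and, when $s$ demands it, in the Bourgain space $X^{\sigma,s}(0,T)$. For $e^{it\Delta}\varphi$ these are the Strichartz/Bourgain inequalities on $\R\times\T$ (in the $L^2$ endpoint, the Takaoka--Tzvetkov estimate quoted in the introduction); for $\mathcal{D}$ they follow by $TT^*$ and the Christ--Kiselev lemma. The genuinely new input is
$$
\left\|\mathcal{B}(w_1,w_2)\right\|_{\mathcal{Z}^s_T}\ \lesssim\ \|w_1\|_{{\cal H}^s(\R)}+\|w_2\|_{{\cal H}^s(\R)},
$$
which is precisely the ``series version of Strichartz's estimates'': one must show each transverse-mode summand of $\mathcal{B}$ obeys a Strichartz bound and that the constants are summable against the weight $(1+|\lambda|+|\xi|)^{1/2}(1+|\lambda|+|\xi|^2)^{s/2}$ that defines ${\cal H}^s$; this is where the half-derivative in $t$, and its optimality for $s=0$, comes from. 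The same estimates fix the admissible $p$: the Strichartz range $r\in[2,4]$ together with the homogeneity of $|u|^{p-2}u$ forces $3\le p\le4$ for $0\le s<\frac12$ and $3\le p\le(3-2s)/(1-s)$ for $\frac12\le s<1$, while for $s\ge1$ Sobolev embedding on the strip closes the nonlinear estimate for all finite $p$, with condition (\ref{1.4.1}) ensuring that $u\mapsto|u|^{p-2}u$ is smooth enough on $H^s$ when $p$ is not even.

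\textbf{Contraction and removal of the auxiliary condition.} Granting these bounds, $\Phi(u):=e^{it\Delta}\varphi+\mathcal{B}(w_1,w_2)+\mathcal{D}(\lambda|u|^{p-2}u)$ maps a ball of $\mathcal{Z}^s_{T^*}$ into itself and is a contraction for $T^*$ small depending on $\mu$, yielding the unique mild solution of Definition \ref{gen_soln} together with its continuous dependence; minimizing over the extensions $w_j$ removes the dependence on the particular extension. For parts (i)--(ii) this is the stated conditional (resp.\ unconditional, since for $s>1$ the space $H^s(\R\times[0,1])$ is already closed under the nonlinearity) local well-posedness. For part (iii), when $0\le s\le1$ I would show a posteriori that every mild solution in $C([0,T];H^s)$ automatically lies in $L^r_tW^{s,r}_{xy}$ for $r\in[2,4]$: applying the same linear estimates to the solution and to its time-truncations shows the Duhamel term is controlled on short intervals by the $C([0,T];H^s)$-norm alone, so uniqueness holds in the larger class and the condition (\ref{1.4}) is redundant.

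\textbf{Globalization and the main obstacle.} For part (iv) with $s=1$ it suffices to bound $\|u(\cdot,t)\|_{H^1(\R\times[0,1])}$ on bounded time intervals and iterate the local theory. I would write the mass and energy identities for (\ref{1.1}); because the Dirichlet data are non-homogeneous these carry boundary flux terms (schematically $\re\int_0^t\!\int_\R \partial_y u\,\overline{\partial_t h_j}\,dx\,d\tau$ and lower-order analogues), which I would estimate using $h_j\in H^1_{loc}(\R;L^2(\R))\cap L^2_{loc}(\R;H^2(\R))$ together with a trace bound for $\partial_y u$ on $\{y=0,1\}$ interpolated against the interior $H^1$ norm. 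A Gronwall argument then gives an a priori bound $\|u(\cdot,t)\|_{H^1}\le C(t)$ in the defocusing case $\lambda<0$ for any $p\ge3$, and in the focusing case $\lambda>0$ for $p=3$ by dominating the sign-indefinite term in the energy via the Gagliardo--Nirenberg inequality and the mass bound. The principal difficulty throughout is the estimate for $\mathcal{B}$: unlike the half-plane or whole-space problem, the strip forces a Fourier series in $y$, so one must prove Strichartz-type bounds mode by mode and sum them against the ${\cal H}^s$ weight without losing derivatives — this is what singles out the space ${\cal H}^s$ and the sharp half-derivative in $t$ at $s=0$; by comparison the globalization step is routine once the boundary flux terms are under control.
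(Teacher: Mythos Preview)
Your overall architecture matches the paper's: the same three-piece decomposition $u=W_0(t)\varphi+W_b[h_1,h_2]+\Phi_{0,f}$, the same Strichartz-type estimates for each piece (with the boundary operator requiring the series estimates that single out ${\cal H}^s$), and the same contraction in $C_tH^s\cap L^r_tW^{s,r}$ for $0\le s\le1$, with $C_tH^s$ alone sufficing for $s>1$. Two points deserve correction.

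\textbf{Part (iv), boundary flux.} You correctly isolate the dangerous term $\mathrm{Re}\int_0^t\!\int_\R \partial_y u\,\overline{\partial_t h_j}\,dx\,d\tau$, but the proposed control --- ``a trace bound for $\partial_y u$ on $\{y=0,1\}$ interpolated against the interior $H^1$ norm'' --- does not work: for $u\in H^1(\R\times[0,1])$ the normal derivative $\partial_y u$ is only in $L^2$ of the strip and has no trace on $\{y=0,1\}$ in general. The paper closes this via a \emph{multiplier identity}: testing the equation against $(y-\tfrac12)\overline{u_y}$ yields (after integration) an estimate of the form
\[
\int_0^t\!\int_\R\big(|u_y(x,0,\tau)|^2+|u_y(x,1,\tau)|^2\big)\,dx\,d\tau\ \lesssim\ \|u_y\|_{L^2_{xyT}}^2+\|u(t)\|_{L^2}\|u_y(t)\|_{L^2}+\|u\|_{L^p_{xyT}}^p+\text{(data)},
\]
i.e.\ the boundary $L^2_{t,x}$ norm of $\partial_y u$ is bounded by interior energy quantities. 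This Pohozaev/virial-type identity is the missing ingredient; once you have it, Cauchy--Schwarz on the flux term, Gagliardo--Nirenberg (for $\lambda>0$, $p=3$), and Gronwall finish exactly as you describe.

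\textbf{Part (iii), unconditional uniqueness.} Your direct bootstrap (``every $C_tH^s$ solution automatically gains the $L^r_tW^{s,r}$ regularity'') is plausible in spirit but the sketch is incomplete: for $0\le s<1$ the nonlinear estimate on $|u|^{p-2}u$ in the dual Strichartz norm uses the $L^r_tW^{s,r}$ norm of $u$, not just $C_tH^s$, so the claimed control ``on short intervals by the $C_tH^s$-norm alone'' is precisely what needs proof. The paper takes a different route: it first establishes \emph{persistence of regularity} (a solution with $H^{s_1}$ data that happens to lie in $H^s$, $s>s_1$, stays in $H^s$ on the same maximal interval), and then invokes an abstract unconditional-uniqueness theorem (Bona--Sun--Zhang, \cite{Sun_CUC_Evo}) that upgrades conditional well-posedness plus persistence of regularity to unconditional well-posedness. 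Your direct approach may be salvageable, but as written it assumes the conclusion.
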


The following remarks give some expansion about the statement of the theorem.

\begin{remark} As mentioned in the first paragraph, the results stated in (i) and (ii) of Theorem 1.4 are also valid for $ 2 < p < 3$.
\end{remark}

\begin{remark}\label{rem1.6}
The compatibility conditions stated in the theorem deserve more discussions. When $ t= 0 $ and $y=0,1$, by the trace theorems, the initial condition and boundary conditions imply that the compatibility conditions
$\varphi (x, 0) = h_1 ( x, 0 )$ and $\varphi(x, 1) = h_2 ( x, 0) $ must be satisfied if $ 1/2 < s < 5/2$ (note that in this case, $h_i(x, 0), i =1,2$ are defined if $h_i (x, t)\in {\cal H}^s(0, T) ,i=1,2 $). For $5/2 < s < 9/2$, more compatibility conditions, which are derived from the equation, have to be satisfied, i.e.,
$$
\left ( \varphi_{xx} + \varphi_{yy} + |\varphi |^{p-2} \varphi \right )\big |_{y =j} = -i h_{j+1, t} (x, 0) \qquad \mbox{for}\quad j= 0,1\, .
$$
Even more conditions have to be imposed if $s > 9/2$. Detailed discussions on general compatibility conditions for KdV equations or parabolic equations can be found in \cite{bsz-1, LM1972-2}.
\end{remark}

    Here, we note that for $0 \le s \le 1$, the local well-posedness result presented in part $(i)$ of the theorem is conditional since (\ref{1.4}) is required to ensure the uniqueness. However, we can remove the condition and the corresponding well-posedness is called unconditional. By \cite{Sun_CUC_Evo}, \emph{Theorem~\ref{thm_main}} shows that the solution obtained is a mild solution defined in Definition \ref{gen_soln}.

    To consider the local well-posedness for (\ref{1.1}), we use the methods introduced in \cite{Sun_NLSE_1d} for studying the IBVPs of one-dimensional NLS equations posed on a finite interval and in \cite{Bourgain_FrRc_NLSE, Takaoka_NLSE_2d_Periodic} for the study of the NLS equations over a {mixed} region with torus. First, the IBVP (\ref{1.1})  is decomposed into three parts (see \cite{Ran_NLSE_2d_uhp}): one in $\R \times [0,1]$ with initial condition and linear Schr\"odinger equation, one in $\R \times [0,1]$ with homogeneous initial condition and non-homogeneous linear equation, and the last one in $\R \times [0,1]$ with non-homogeneous boundary data, homogeneous initial condition and homogenous linear equation. The key step is to study the following linear non-homogeneous boundary value problem:
    \begin{equation} \label{1.5}
        \left\{
        \begin{array}{ll}
            i u_t + u_{xx} + u_{yy} = 0\, , & (x,y,t) \in \R \times (0,1) \times (0,T)\, , \\ [.08in]
            u(x,y,0) = 0 \, , & \\ [.08in]
            u(x,0,t) = h_1(x,t)\, , \quad u(x,1,t) = h_2(x,t)\, . &
        \end{array}
        \right. \,
    \end{equation}
    We apply the Fourier series to the linear equation with respect to $y$ to explicitly formulate the solution in terms of the boundary data $h_j$, $j=1$, $2$, called the boundary integral operator, $$u(x,y,t):= W_{b}[h_1,h_2](x,y,t) \, .$$
    For the Strichartz's estimates of the operator, we apply the work in \cite{Bourgain_FrRc_NLSE} and \cite{Takaoka_NLSE_2d_Periodic} and show that for any given $s\ge0$, $T>0$ and $h_1, h_2 \in {\cal H}^s(0,T)$, the IBVP (\ref{1.5})  admits a unique solution $u\in C([0,T]; H^s(\R \times [0,1])$ and
    \begin{equation*} \label{NLSE_IB_2d_stp_loc_lin_bdy_est_Intro}
        \|u\|_{L^r ((0,T); W^{s,r}(\R \times [0,1]))} \le C\left ( \| h_1 \|_{{\cal H}^s (0,T)} + \| h_2 \|_{{\cal H}^s (0,T)}\right )
    \end{equation*}
    for any $r\in[2,4]$. The basic plot of the proof for the remaining arguments, especially local well-posedness, is to derive an equivalent integral equation for the NLS equation by semi-group theory and perform Banach fixed point argument to obtain the existence and uniqueness of the solution to (\ref{1.1}). The continuous dependence then follows from the property of the contraction. As a part of the argument, the global well-posedness in $H^1(\R\times[0,1])$ is also achieved by using the energy estimates and corresponding {\it a-priori} estimates from some conserved quantities.

    Some notations are adopted in the context. For two real-valued terms $A$ and $B$, write: ($a$) $A \eqsim B$ if there is a positive constant $c$ so that $A = c B$; ($b$) $A \sim B$ if there exist two independent positive numbers $c_1$ and $c_2$ so that $c_1 A \le B \le c_2 A$; ($c$) $A \lesssim B$ (or $A \gtrsim B$) if there is a positive constant $c$ such that $A \le c B$ (or $A \ge c B$).

    The paper is organized as follows. Section \ref{sec_2} gives the formulation of the problem and the representations of solution operators. Section \ref{sec_3} deals with various estimates for the solution operators. The local well-posedness of the IBVP is proved in Section \ref{sec_4}. The global well-posedness of (\ref{1.1}) is provided in Section \ref{sec_5}.

\section{Formulation of the problem and representations of solutions}\label{sec_2}

    In this section, we apply Fourier series and Fourier transforms for the solution of the IBVP and give an integral representation of the solution for this problem.

    Write (\ref{1.1}) as
    \begin{equation}\label{2.1}
        \left\{
          \begin{array}{lrl}
            i u_t + u_{xx} + u_{yy} + f = 0\, , \quad & \qquad (x,y,t) & \in \R \times (0,1) \times (0,T) \, , \\
            u(x,y,0) = \varphi(x,y)\, ,  \quad & \qquad (x,y) & \in \R \times (0,1) \, , \\
            u(x,0,t) = h_1(x,t)\, ,\quad
            u(x,1,t) = h_2(x,t) \, , \quad & \qquad (x,t) & \in \R \times (0,T)\, ,  \\
          \end{array}
        \right.
    \end{equation}
    where $f(x,y,t) = \lambda |u(x,y,t)|^{p-2} u(x,y,t)$ for $p\ge3$ and $(x,y,t) \in \R \times (0,1) \times (0,T)$.
    The solution formula of \eqref{2.1} is derived as follows.
It is known that for any $g (y) \in L^2(0,1)$, the eigenfunctions $\sin (n\pi y) , n = 1, 2 ,\dots $ of the following Sturm-Liouville problem form a basis in $L^2 ( 0,1)$,
$$
- v_{yy} = \tilde \lambda_0 v \quad \mbox{for}\ y \in (0,1)\, ,\quad v ( 0 ) = v(1) = 0\, .
$$
which implies that $ g ( y ) = (1/2) \sum_{n=1}^\infty g_n\sin ( n \pi y ) $ with $g_n = \int^1_0 g (y ) \sin (n \pi y ) $ as the Fourier coefficient of $g$. Now, we multiply the equation in \eqref{2.1} by $\sin (n\pi y) $ and integrate the resulting equation from $0$ to $1$ together with integration by parts twice with respect to $y$, which yield the following equation for $U ( x, t, n ) = \int^1_0 u ( x, y, t) \sin ( n \pi y ) dy $ (note that $u ( x, y, t) = (1/2) \sum_{n=1}^\infty U(x,t,n) \sin ( n \pi y ) $),
\begin{align*}
& i U_t + U_{xx} - (n\pi )^2 U =  \big ( (-1)^n  h_2 - h_1 \big ) n\pi - \int^1_0 f ( x, y, t ) \sin ( n\pi y ) dy \, ,\\
& U (x, 0 , n) =  \int^1_0 \varphi ( x , y ) \sin (n\pi y ) dy \, .
\end{align*}
If $\hat U = \FF_x [U ](\xi , t , n) $ is the Fourier transform of $U$ with respect to $x$, then $\hat U$ satisfies
\begin{align*}
& i \hat U_t - ( \xi^2 + (n\pi )^2 ) \hat U = \FF_x \Big [ \big ( (-1)^n  h_2 - h_1 \big ) n\pi - \int^1_0 f ( x, y, t ) \sin ( n\pi y ) dy\Big ]  \, ,\\
 & \hat U (\xi, 0 , n) =  \int^1_0 \FF_x [ \varphi ( x , y ) ] \sin (n\pi y ) dy \, .
\end{align*}
Note that the above equation is a first-order nonhomogeneous ordinary differential equation of $\hat U$ with respect to $t$. After solving the equation for $\hat U$ and performing inverse Fourier transform with respect to $\xi$, it is
straightforward to derive the following integral equation equivalent to (\ref{2.1}),
    \begin{equation}\label{2.5}
       u(x,y,t) = W_0(t)\varphi (x,y) + W_b[h_1,h_2] (x,y,t) + \Phi_{0, f } ( x, y, t) \, \, ,
    \end{equation}
where $W_0 (t)$, $ W_b (x, y, t) $ and $ \Phi_{0, f } ( x, y, t)$  are given in the following propositions.
\begin{prop}\label{prop_2.2} 
    \begin{align} \nonumber
            W_0(t) \varphi(x,y) & = \int_{-\infty}^{\infty} e^{i\pi x\xi}\left[\sum_{n=1}^{\infty} C_n(\xi) \cdot e^{-i\pi^2\left(\xi^2+n^2\right)t} \cdot \sin(n\pi y)\right] \, d\xi \\
            &= \dfrac{1}{2i} \int_{-\infty}^{\infty} e^{i\pi x\xi}\left[\sum_{n\in\Z} \widetilde{C}_n(\xi) \cdot e^{-i\pi^2\left(\xi^2+n^2\right)t+in\pi y}\right] \, d\xi \label{2.6}
    \end{align}
        where $C_n(\xi) = 2\int_0^1 \widehat{\varphi^x}(\xi,y)\sin(n\pi y) \, dy$ and
        \begin{equation*}
            \widetilde{C}_n =
            \left\{
            \begin{array}{r}
                 C_n \, , \hspace{1in} n>0 \, , \\
                0 \, , \hfill n=0 \, , \\
                 -C_n \, , \hfill n<0\, .
            \end{array}
            \right.
        \end{equation*}
        Here, $\widehat{\varphi^x}$ denotes the Fourier transform of function $\varphi$ with respect to $x$.
    \end{prop}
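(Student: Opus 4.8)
The plan is to carry out the separation-of-variables computation sketched just above in the special case $f\equiv 0$, $h_1=h_2=0$, and record that it produces the two closed forms in \eqref{2.6}. First I would let $u(x,y,t)$ denote the solution of the linear problem $iu_t+u_{xx}+u_{yy}=0$ on $\R\times(0,1)\times(0,T)$ with $u(x,y,0)=\varphi(x,y)$ and $u(x,0,t)=u(x,1,t)=0$, and expand it in the Dirichlet eigenbasis $\{\sin(n\pi y)\}_{n\ge1}$ of $L^2(0,1)$, writing $u(x,y,t)=2\sum_{n\ge1}U(x,t,n)\sin(n\pi y)$ with $U(x,t,n)=\int_0^1 u(x,y,t)\sin(n\pi y)\,dy$ (the constant $2$ being the one that makes the expansion reproduce $\varphi$ at $t=0$, which is also the source of the factor $2$ in the definition of $C_n$). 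Multiplying the equation by $\sin(n\pi y)$ and integrating twice by parts in $y$, the surviving boundary terms involve $u(x,1,t)(-1)^n$ and $u(x,0,t)$, both of which vanish by the homogeneous Dirichlet condition; hence each coefficient solves the scalar linear Schr\"odinger equation $iU_t+U_{xx}-(n\pi)^2U=0$ with $U(x,0,n)=\int_0^1\varphi(x,y)\sin(n\pi y)\,dy$.

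Next I would apply the $x$-Fourier transform in the normalization consistent with the kernel $e^{i\pi x\xi}$, so that $\partial_x$ becomes multiplication by $i\pi\xi$ and the equation for $\widehat U(\xi,t,n)=\FF_x[U](\xi,t,n)$ collapses to the constant-coefficient ODE $i\widehat U_t=\pi^2(\xi^2+n^2)\widehat U$, whose solution is $\widehat U(\xi,t,n)=\widehat U(\xi,0,n)\,e^{-i\pi^2(\xi^2+n^2)t}$. Since $\widehat U(\xi,0,n)=\int_0^1\widehat{\varphi^x}(\xi,y)\sin(n\pi y)\,dy=\tfrac12 C_n(\xi)$, inverting the $x$-transform and resumming the sine series yields exactly the first line of \eqref{2.6}. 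For the symmetrized second form I would substitute $\sin(n\pi y)=(e^{in\pi y}-e^{-in\pi y})/(2i)$ and extend $C_n$ to $n\in\Z$ by the odd sequence $\widetilde C_n$ (with $\widetilde C_0=0$); because the weight $e^{-i\pi^2(\xi^2+n^2)t}$ is even in $n$ while $\widetilde C_n$ is odd, pairing the $n$ and $-n$ terms of the bilateral sum gives $\sum_{n\in\Z}\widetilde C_n e^{-i\pi^2(\xi^2+n^2)t+in\pi y}=2i\sum_{n\ge1}C_n e^{-i\pi^2(\xi^2+n^2)t}\sin(n\pi y)$, which accounts for the prefactor $1/(2i)$.

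The only genuinely delicate point is the functional-analytic justification of these manipulations when $\varphi$ is merely in $H^s(\R\times[0,1])$: the interchange of the $y$-summation with the $x$-integration and the $t$-evolution, and the sense in which the series converges. The clean route is to perform the computation first for $\varphi$ in a dense subclass — say $\varphi(x,\cdot)$ smooth and vanishing in a neighborhood of $y=0,1$ and Schwartz in $x$ — where every integral and series converges absolutely and $u$ is a classical solution of \eqref{1.1} with $f=h_1=h_2=0$, and then to extend the identity to all $\varphi\in H^s$ by density, once the boundedness of $W_0(t)$ on the relevant spaces (the Strichartz-type bounds of Section \ref{sec_3}) is available. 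With the formula in hand the initial and boundary conditions are immediate: each mode $\sin(n\pi y)$ vanishes at $y=0,1$, so $W_0(t)\varphi$ has zero trace on $y=0,1$, and at $t=0$ the series is the Fourier sine expansion of $\varphi$, which converges to $\varphi$ in $L^2_y$ (and in $H^s$). I do not anticipate any obstacle beyond this bookkeeping; the substance of the paper lies in the subsequent estimates, not in this representation.
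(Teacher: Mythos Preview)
Your proposal is correct and follows essentially the same route as the paper: the paper derives \eqref{2.5} (of which the $W_0$ piece is Proposition~\ref{prop_2.2}) by the identical separation-of-variables/Fourier computation in the paragraphs immediately preceding the proposition, expanding in the sine basis, integrating by parts in $y$, and solving the resulting first-order ODE after an $x$-Fourier transform. Your added remarks on the symmetrized bilateral form and the density argument are more detailed than what the paper spells out, but the underlying derivation is the same.
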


It can be shown that $W_0(t)\varphi(x,y)$ is an odd function in $y$ if $\varphi$ is oddly extended to $[-1, 0]$. Also, it is interesting to note that $W_0 (t) \varphi$ solves \eqref{2.1} with $f = h_1 = h_2=0$ and can considered as a $C_0$-semigroup generated by $i\Delta$ (see \cite{Takaoka_NLSE_2d_Periodic}). However, the derivation of \eqref{2.5} is independent of any information or conditions used in semigroup theory.

\begin{prop}\label{prop_2.4} 
        \begin{equation*}\label{NLSE_IB_2d_stp_loc_nonlin_w0}
            \Phi_{0,f}(x,y,t) = i \left(\int_0^t W_0(t-\tau)f(\tau) \, d\tau \right) (x,y)\, .
        \end{equation*}
    \end{prop}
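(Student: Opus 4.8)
\emph{Proof idea.}
The plan is to extract $\Phi_{0,f}$ directly from the Fourier-side computation that already produced \eqref{2.5}, now retaining only the contribution of the forcing $f$ and setting $\varphi=0$, $h_1=h_2=0$. By linearity of the ODE for $\hat U(\xi,t,n)$ displayed just before \eqref{2.5}, this contribution is the solution of
\begin{equation*}
    i\hat U_t - \big(\xi^2+(n\pi)^2\big)\hat U = -\hat F_n(\xi,t),\qquad \hat U(\xi,0,n)=0,
\end{equation*}
where $\hat F_n(\xi,t)=\FF_x\big[\int_0^1 f(x,y,t)\sin(n\pi y)\,dy\big]$. Multiplying by the integrating factor $e^{i(\xi^2+(n\pi)^2)t}$ and integrating from $0$ gives
\begin{equation*}
    \hat U(\xi,t,n) = i\int_0^t e^{-i(\xi^2+(n\pi)^2)(t-\tau)}\,\hat F_n(\xi,\tau)\,d\tau.
\end{equation*}

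The next step is to recognize, by comparison with \eqref{2.6}, that for each fixed $\tau$ the factor $e^{-i(\xi^2+(n\pi)^2)(t-\tau)}$ is precisely the $(\xi,n)$-symbol of the operator $W_0(t-\tau)$ acting on $g:=f(\cdot,\cdot,\tau)$, whose $\FF_x$-sine coefficients are $\hat F_n(\xi,\tau)$; equivalently, the $n$-th sine coefficient of $W_0(t-\tau)g$ has $\FF_x$-transform $e^{-i(\xi^2+(n\pi)^2)(t-\tau)}\hat F_n(\xi,\tau)$. Inverting $\FF_x$ in $x$ and summing the sine series in $y$, and interchanging the $\tau$-integral with the $\xi$-integral and the $n$-summation, one obtains
\begin{equation*}
    \Phi_{0,f}(x,y,t) = i\int_0^t \big(W_0(t-\tau)f(\cdot,\cdot,\tau)\big)(x,y)\,d\tau,
\end{equation*}
which is the asserted identity. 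As a consistency check I would verify directly from this formula that $\Phi_{0,f}(x,y,0)=0$, that $\Phi_{0,f}(x,0,t)=\Phi_{0,f}(x,1,t)=0$ (each $W_0(t-\tau)g$ being a sine series in $y$), and that $\partial_t\Phi_{0,f}=i f + i\Delta\Phi_{0,f}$ by differentiating under the integral sign and using $\partial_t W_0(s)=i\Delta W_0(s)$; hence $\Phi_{0,f}$ indeed solves \eqref{2.1} with $\varphi=h_1=h_2=0$, which confirms that \eqref{2.5} is the correct splitting.

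The only genuine issue is the legitimacy of the interchanges of the $\tau$-integration with the inverse $x$-transform and the $y$-summation, and the convergence of the sine series. For the smooth approximating solutions of Definition \ref{gen_soln} one has $f=\lambda|u_n|^{p-2}u_n\in C([0,T];L^2(\R\times(0,1)))$, so all the integrals and sums converge absolutely in $L^2$, Fubini applies, and differentiation under the integral sign is justified; the identity for general $f$ then follows by density together with the uniform boundedness of $W_0(t)$ on $L^2$. (Alternatively, since $W_0(t)$ is the $C_0$-semigroup generated by $i\Delta$ as noted after Proposition \ref{prop_2.2}, the formula is immediate from Duhamel's principle applied to $u_t=i\Delta u+if$, $u(0)=0$; we keep the Fourier-side derivation so as not to rely on semigroup theory.)
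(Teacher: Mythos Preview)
Your proposal is correct and follows exactly the route the paper itself indicates: the paper does not give a separate proof of this proposition, but presents it as one of the three pieces read off from solving the first-order ODE for $\hat U(\xi,t,n)$ displayed just before \eqref{2.5}, and you have faithfully carried out that computation for the $f$-contribution with $\varphi=h_1=h_2=0$. Your additional consistency checks (vanishing initial and boundary traces, verification of the inhomogeneous equation) and your remarks on justifying the interchanges go beyond what the paper spells out, but are entirely in keeping with its spirit, including the explicit decision to avoid invoking semigroup theory.
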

\begin{prop}\label{prop_2.5} 
        \begin{align}
             W_b[h_1,h_2](x,y,t)
            =& \pi \int_{-\infty}^{\infty} \sum_n \left[n \int_0^t e^{-i\pi^2\left(\xi^2+n^2\right)(t-\tau) + i\pi(\xi x + n y)} \left(\widehat{h_1^x} - (-1)^n \widehat{h_2^x} \right)(\xi, \tau) \, d\tau\right] \, d\xi \nonumber \\
            &\eqsim W_{b_1}[h_1](x,y,t) + W_{b_2}[h_2](x,y,t)\, ,  \label{2.7}
        \end{align}
        where
        \begin{align}
            W_{b_1}[h](x,y,t) &= \int_{-\infty}^{\infty} \sum_n \left[n \int_0^t e^{-i\pi^2\left(\xi^2+n^2\right)(t-\tau) + i\pi(\xi x + n y)} \widehat{h^x}(\xi, \tau) \, d\tau\right] \, d\xi \, , \label{2.8} \\
            W_{b_2}[h](x,y,t) &= \int_{-\infty}^{\infty} \sum_n \left[n \int_0^t e^{-i\pi^2\left(\xi^2+n^2\right)(t-\tau) + i\pi(\xi x + n y)} (-1)^{n+1} \widehat{h^x}(\xi, \tau) \, d\tau\right] \, d\xi \, . \label{2.9}
        \end{align}
    \end{prop}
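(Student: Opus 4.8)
The plan is to push through the computation already set up before the statement. One has reduced \eqref{2.1} to a scalar first-order ODE in $t$ for $\hat U(\xi,t,n) = \FF_x\!\left[\int_0^1 u\sin(n\pi y)\,dy\right]$, with forcing coming partly from the boundary data $\big((-1)^n h_2 - h_1\big)n\pi$ and partly from $\int_0^1 f\sin(n\pi y)\,dy$, and with initial value $\int_0^1\widehat{\varphi^x}(\xi,y)\sin(n\pi y)\,dy$. Since its coefficient $-i(\xi^2+(n\pi)^2)$ is independent of $t$, variation of parameters solves it explicitly; one then undoes the transforms, i.e. applies $U=\FF_x^{-1}[\hat U]$, forms $u(x,y,t) = \tfrac12\sum_{n\ge1}U(x,t,n)\sin(n\pi y)$, and finally reindexes the Fourier sine series into a bilateral series over $n\in\Z$.

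Duhamel's formula writes $\hat U$ as the sum of a homogeneous term $e^{-i(\xi^2+(n\pi)^2)t}\hat U(\xi,0,n)$, a forcing term built from $\int_0^1 f\sin(n\pi y)\,dy$, and a boundary term
$$\hat U^{\mathrm{bdy}}(\xi,t,n) = i\pi n\int_0^t e^{-i(\xi^2+(n\pi)^2)(t-\tau)}\big(\widehat{h_1^x}-(-1)^n\widehat{h_2^x}\big)(\xi,\tau)\,d\tau .$$
Undoing the transforms on each of the three pieces produces $W_0(t)\varphi$, $\Phi_{0,f}$ and $W_b[h_1,h_2]$ respectively, which simultaneously gives \eqref{2.5} and Propositions~\ref{prop_2.2} and \ref{prop_2.4}; here we follow only $\hat U^{\mathrm{bdy}}$. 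Applying $\FF_x^{-1}$ and the sine series, and then rescaling $\xi\mapsto\pi\xi$ (so that $\xi^2+(n\pi)^2 = \pi^2(\xi^2+n^2)$ and $e^{ix\xi}=e^{i\pi x\xi}$, the Jacobian being absorbed into $\eqsim$), yields a series $\sum_{n\ge1}(\,\cdots)\sin(n\pi y)$ whose $n$-th summand carries the prefactor $n$ and the kernel $e^{-i\pi^2(\xi^2+n^2)(t-\tau)}$.

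To obtain the bilateral sum, write $\sin(n\pi y) = (2i)^{-1}\big(e^{in\pi y}-e^{-in\pi y}\big)$ and substitute $n\mapsto -n$ in the part built from $e^{-in\pi y}$. Because the kernel $e^{-i\pi^2(\xi^2+n^2)(t-\tau)}$ and the sign $(-1)^n$ are even in $n$ while the prefactor $n$ is odd in $n$, the $n$-th summand is odd under $n\mapsto -n$; the two halves therefore recombine, the $n=0$ term drops out, and one is left with a sum over all $n\in\Z$ in which $\sin(n\pi y)$ is replaced by a constant multiple of $e^{in\pi y}$. This is precisely \eqref{2.7}. Splitting the integrand $\widehat{h_1^x}-(-1)^n\widehat{h_2^x}$ into its two terms and using $-(-1)^n=(-1)^{n+1}$ then gives $W_b[h_1,h_2]\eqsim W_{b_1}[h_1]+W_{b_2}[h_2]$ with $W_{b_1}$, $W_{b_2}$ as in \eqref{2.8}–\eqref{2.9}.

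The one point requiring care is that every interchange above — pulling $\FF_x^{-1}$ through the $\tau$-integral, summing over $n$, and reindexing term by term — is only formally valid; for rough $\varphi$ and $h_j$ it is not justified directly. The clean way to proceed, which I expect to be the actual work, is to carry out the derivation first for smooth, compactly supported data, where Fubini and dominated convergence apply and the resulting $u$ genuinely solves \eqref{2.1} in the sense of Definition~\ref{gen_soln}, and then to adopt \eqref{2.5} together with \eqref{2.7}–\eqref{2.9} as the \emph{definition} of the solution operators for general data. Convergence of the series in \eqref{2.7} and the continuous dependence of $W_b$ on $h_j\in\mathcal{H}^s(0,T)$ are then not part of this proposition at all; they are exactly the Strichartz-type estimates established in Section~\ref{sec_3}. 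Thus the genuine obstacle is postponed to the next section, and the present statement amounts to the bookkeeping that identifies the correct boundary integral operator.
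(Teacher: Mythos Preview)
Your proposal is correct and follows exactly the approach the paper outlines: the paper does not give a separate proof of this proposition but presents it as the outcome of the computation sketched just before Proposition~\ref{prop_2.2}, namely solving the first-order ODE for $\hat U$ by Duhamel, inverting the Fourier transform in $x$, reassembling the sine series, and reindexing to a bilateral sum. Your observations about the $\xi\mapsto\pi\xi$ rescaling, the odd-in-$n$ symmetry that converts $\sin(n\pi y)$ to $e^{in\pi y}$, and the role of Remark~\ref{rmk_2.7} in handling rough data are all in line with the paper's treatment.
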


        One may notice the equivalency between $|W_{b_1}[h]|$ in (\ref{2.8}) and $|W_{b_2}[h]|$ in (\ref{2.9}), as far as the estimates are concerned. Moreover, $|W_b[h_1,h_2] | \le |W_{b_1}[h_1]| + |W_{b_1}[h_2]|$, which means that we only need derive estimates for $W_{b_1}[h]$. The following form is also adopted for $W_{b_1}[h]$ alternatively,
        \begin{equation*}\label{NLSE_2d_stp_loc_lin_wb_exp}
            W_{b_1}[h](x,y,t) \eqsim \int_{-\infty}^{\infty} e^{i\pi x\xi} \sum_{n=1}^{\infty} \left[n \int_0^t e^{-i\pi^2\left(\xi^2+n^2\right)(t-\tau)} \left(\widehat{h^x} (\xi , \tau) \right) \, d\tau \sin(n\pi y)\right] \, d\xi\, .
        \end{equation*}
        If no confusion arises, we may use $W_b[h]$ instead of $W_{b_1}[h]$.

From the above discussion, we only need to study the solutions of \eqref{2.5}, which is derived from (\ref{2.1}).
The equivalency lemma for solutions of (\ref{2.1}) and those of (\ref{2.5}) is also valid as stated in \emph{Lemma 4.2.8} \cite{Cazenave_NLSE} if $h_1 = h_2 =0$.
    \begin{remark}\label{remark_2.1} 
        If $h_1 $ and $h_2$ are not both equal zero, then {Lemma 4.2.8} in \cite{Cazenave_NLSE} cannot be applied directly and some conditions on $h_1$ and $h_2$ must be added. If the initial and boundary data are smooth enough with compatibility conditions and the solutions of (\ref{2.5}) are in $C([0, T]; H^2 (\mathbb{R}\times [0,1])) \cap
        C^1([0, T]; L^2 (\mathbb{R}\times [0,1]))$, then it is straightforward to check that such solutions of (\ref{2.5}) are strong solutions of
        (\ref{2.1}) by reversing the derivation of (\ref{2.5}) from (\ref{2.1}). For general initial and boundary data, we will only consider the solutions of (\ref{2.5}), which is consistent with the mild solutions of (\ref{2.1}) defined in Definition \ref{gen_soln}.
    \end{remark}

    \begin{remark}\label{rmk_2.7} 

    The derivation of (\ref{2.5}) from (\ref{2.1}) shows that the procedure makes sense if $\varphi \in L^2 ( \mathbb{R}\times (0,1)), h_1, h_2 \in  L^2_{loc, t}  (\mathbb{R} \times  \mathbb{R})$ and $ f \in L^2_{loc, t} ( \mathbb{R}\times (0,1) \times \mathbb{R} )$ so that the Fourier series or Fourier transforms can be applied. There are no compatibility conditions at $t = 0 $ and $ y= 0, 1$ or other conditions on $\varphi, h_1, h_2$ and $f$ involved. Therefore, if a solution of (\ref{2.5}) can be found, then such a solution must be a solution of (\ref{2.1}) in the distributional sense, or the mild solution of (\ref{2.1}) defined in Definition \ref{gen_soln}. If $\varphi $ and $h_1, h_2$ have more regularities, say, in $H^1$, a necessary condition for (\ref{2.1}) to have a solution is that the compatibility conditions
    $\varphi ( x, 0 ) = h_1 ( x, 0)$ and $\varphi ( x, 1 ) = h_2 ( x, 0)$ in $L^2 (\mathbb{R})$ must be satisfied so that the convergent sequences for the initial and boundary data used in Definition \ref{gen_soln} can also satisfy those compatibility conditions. Here, notice that (\ref{2.5}) has only three independent integral operators and there are no compatibility conditions on $\varphi$ and $h_1, h_2$ needed for (\ref{2.5}) to have a solution. Therefore, (\ref{2.5}) is more general than (\ref{2.1}) in terms of the choice of $\varphi $ and $h_1, h_2$. The solution of (\ref{2.5}) can still be a solution of (\ref{2.1}) in the sense of distribution,
    if $\varphi$ and $h_1, h_2 $ have more regularity, but do not satisfy the compatibility conditions. If the compatibility conditions are satisfied, then by the definition of mild solution in Definition \ref{gen_soln}, the regularity of the solution of (\ref{2.5}) implies the regularity of the solution of (\ref{2.1}). It is noted that more compatibility conditions are required if more regularity of $\varphi$ and $h_1, h_2$ is imposed (see Remark \ref{rem1.6}).  More detailed discussions of such compatibility conditions for the KdV equations or parabolic equations in domains with boundaries can be found in \cite{bsz-1,LM1972-2}.

    \end{remark}

\section{Estimates of solution operators}\label{sec_3}

    In this section, the estimates for solution operators in \emph{Section~\ref{sec_2}} are derived.

    First, show that $W_0$ maps from $H^s (\R\times [0,1])$ to $$L^r_t\left([0,T]; \, W^{s,r}_{xy}(\R\times [0,1])\right) \bigcap L^{\infty}_t \left([0,T]; \, H^s_{xy}(\R\times [0,1])\right)$$ for $r\in[2,4]$.

    \begin{prop}\label{prop_3.1} 
        Let $r\in [2,4]$ and $s\ge 0$. Then for some $\sigma>\frac{1}{2}$,
        \begin{align}
            \left\|W_0(t) \varphi\right\|_{L^r_t \left([0,T]; \, W^{s,r}_{xy}(\R \times [0,1])\right)} & \lesssim \left(T^{\frac{1}{2}}+T^{\frac{1}{2}-\sigma}\right) \left\|\varphi\right\|_{H^s_{xy}(\R \times [0,1])} \, , \label{3.1} \\ 
            \left\|W_0(t)\varphi\right\|_{L^{\infty}_t \left([0,T]; \, H^s_{xy}(\R \times [0,1])\right)} & \lesssim \left\|\varphi\right\|_{H^s_{xy}(\R \times [0,1])} \, . \label{3.2} 
        \end{align}
    \end{prop}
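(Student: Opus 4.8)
I would dispose of the easy bound $(\ref{3.2})$ first. By the second representation in $(\ref{2.6})$, up to a harmless rescaling of the torus $W_0(t)\varphi$ is the restriction to $\R\times[0,1]$ of $e^{it\Delta}\tilde\varphi$, the free Schr\"odinger group on $\R_x\times\T_y$ applied to the odd, $2$-periodic extension $\tilde\varphi$ of $\varphi$ in $y$. Since $e^{it\Delta}$ is a Fourier multiplier of modulus one (symbol $e^{-i\pi^2(\xi^2+n^2)t}$), it is an isometry --- up to equivalence of norms --- on $H^s(\R\times\T)$ for every $t$. Combining this with the boundedness of the restriction map $H^s(\R\times\T)\to H^s(\R\times[0,1])$ and the norm equivalence $\|\tilde\varphi\|_{H^s(\R\times\T)}\sim\|\varphi\|_{H^s(\R\times[0,1])}$ yields $\|W_0(t)\varphi\|_{H^s_{xy}(\R\times[0,1])}\lesssim\|\varphi\|_{H^s_{xy}(\R\times[0,1])}$ uniformly in $t$, which is $(\ref{3.2})$.

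For $(\ref{3.1})$ I would establish the two endpoints $r=2$ and $r=4$ and interpolate. The case $r=2$ is immediate from the isometry property and Fubini:
\[
\|W_0(t)\varphi\|_{L^2_t([0,T];\,W^{s,2}_{xy})}=\Bigl(\int_0^T\|W_0(t)\varphi\|_{H^s_{xy}}^2\,dt\Bigr)^{1/2}\lesssim T^{\frac12}\,\|\varphi\|_{H^s_{xy}},
\]
which produces the $T^{1/2}$ term. For $r=4$ I would commute $\langle D_{x,y}\rangle^{s}$ through the group and invoke the $L^4$ Strichartz estimate on the mixed domain $\R_x\times\T_y$ established by Bourgain and Takaoka--Tzvetkov, which in Bourgain-space form reads $\|u\|_{L^4_{t,x,y}(\R\times\R\times\T)}\lesssim\|u\|_{X^{\sigma,0}}$ for any fixed $\sigma>\tfrac12$. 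Applied with $u=e^{it\Delta}\langle D_{x,y}\rangle^{s}\tilde\varphi$, together with the elementary restriction-norm bound
\[
\|e^{it\Delta}\psi\|_{X^{\sigma,0}(0,T)}\;\lesssim\;T^{\frac12-\sigma}\,\|\psi\|_{L^2_{xy}}
\]
--- obtained by taking the extension $\eta(t/T)\,e^{it\Delta}\psi$ with $\eta$ a fixed bump, computing its $X^{\sigma,0}$-norm by Plancherel and the substitution $\mu=T(\tau+\xi^2+n^2)$, and estimating $\langle\mu/T\rangle^{2\sigma}\le T^{-2\sigma}\langle\mu\rangle^{2\sigma}$ for $T\le1$ --- this gives $\|W_0(t)\varphi\|_{L^4_t([0,T];\,W^{s,4}_{xy})}\lesssim T^{\frac12-\sigma}\|\varphi\|_{H^s_{xy}}$. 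Interpolating this against the $r=2$ estimate in the scale $L^r_tW^{s,r}_{xy}$ (with $1/r=(1-\theta)/2+\theta/4$) produces $\|W_0(t)\varphi\|_{L^r_tW^{s,r}_{xy}}\lesssim (T^{\frac12})^{1-\theta}(T^{\frac12-\sigma})^{\theta}\|\varphi\|_{H^s}\le(T^{\frac12}+T^{\frac12-\sigma})\|\varphi\|_{H^s}$ for every $r\in[2,4]$, which is $(\ref{3.1})$.

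The chief obstacle is the $r=4$ endpoint, i.e.\ the $L^4$ Strichartz inequality on $\R_x\times\T_y$: unlike on $\R^2$ there is no $L^2$-scaling-critical $L^4$ bound on the mixed domain, because of lattice-point ($\ell^4$) obstructions coming from the periodic direction, so one is forced into $X^{\sigma,s}$ with $\sigma>\tfrac12$; it is precisely this loss that produces the factor $T^{1/2-\sigma}$ in $(\ref{3.1})$ rather than the clean power $T^{2/r-1/2}$ a non-lossy estimate would give. A secondary technical point to handle with care is the $y$-regularity in the reduction to $\R\times\T$: the odd extension of $\varphi$ lies in $H^s(\R\times\T)$ with comparable norm only for $0\le s<\tfrac12$, or for larger $s$ when the Dirichlet traces $\varphi(x,0)$ and $\varphi(x,1)$ vanish --- which is where the compatibility conditions of Theorem~\ref{thm_main} enter --- so for $s\ge\tfrac12$ the $H^s$-norm on $\R\times[0,1]$ appearing in the proposition is to be read in the Dirichlet-adapted (sine-series) sense.
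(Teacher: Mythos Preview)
Your proposal is correct and follows essentially the same route as the paper: both reduce via the odd $2$-periodic extension to the free Schr\"odinger evolution on $\R_x\times\T_y$, invoke the Takaoka--Tzvetkov $L^4$ Strichartz estimate through the embedding $X^{\sigma,0}\hookrightarrow L^4$ with $\sigma>\tfrac12$ (which is exactly the paper's $(\ref{3.4})$--$(\ref{3.5})$), handle higher $s$ by commuting spatial derivatives through the Fourier multiplier, and obtain $(\ref{3.2})$ by Plancherel. Your closing remark that for $s\ge\tfrac12$ the $H^s$-norm must be read in the Dirichlet/sine-series sense is precisely the convention the paper adopts when it declares the $H^s(0,1)$ norm equivalent to $\bigl(\sum_n(1+n^2)^s|c_n|^2\bigr)^{1/2}$.
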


    \begin{proof}
        First, we let $s = 0$. By using an odd extension of  initial condition to $y \in [-1, 0]$ and then a periodic extension to $y \in \T$, the estimate of $\left\|W_0(\cdot)\varphi\right\|_{L^r_{xyt} (\R \times [0,1] \times [0,T])}$ is provided for $r\in [2,4]$ in \emph{Section 2} of \cite{Takaoka_NLSE_2d_Periodic} (here, since $s=0$, the extension is always possible). By restricting the estimate on the strip domain $\R \times [0,1]$, we have
        \begin{equation}\label{3.3} 
            \left\|W_0(t)\varphi\right\|_{L^r_{xyt} (\R \times [0,1] \times [0,T])} \lesssim \left(T^{\frac{1}{2}}+T^{\frac{1}{2}-\sigma}\right) \left\|\varphi\right\|_{L^2_{xy}(\R \times [0,1])} \, .
        \end{equation}
        However, the representation of $W_0(t)\varphi$ in (\ref{2.6}) shows that if $\alpha = ( \alpha_1, \alpha_2 ) $ is a nonnegative two-dimensional multi-index with $|\alpha|=\alpha_1 + \alpha_2 = s$ with $\alpha_1, \alpha_2$ integers, then
        \begin{align*}
        D^{\alpha} W_0(t)\varphi = & \dfrac{1}{2i} D^{\alpha} \int_{-\infty}^{\infty} e^{i\pi x\xi}\left[\sum_{n\in\Z} \widetilde{C}_n(\xi) \cdot e^{-i\pi^2\left(\xi^2+n^2\right)t+in\pi y}\right] \, d\xi \\
         \simeq &\int^\infty_{-\infty} \sum_{n\in\Z}  e^{-i\pi^2\left(\xi^2+n^2\right)t+i\pi (  x\xi + n y)} n^{\alpha_2}\widehat { \partial ^{\alpha_1}_x \phi} (\xi , n ) \, d\xi
        \end{align*}
        where $\phi $ is the odd extension of $\varphi $ to $[-1, 1]$ for the variable $y$ (note that we only use the Fourier coefficients of $\phi$ for $y\in [0,1]$). Thus, from the proof of (\ref{3.3}), we have the estimate for the norm on $L^r_t \left([0,T], W^{s,r}_{xy}(\R \times [0,1])\right)$, which yields (\ref{3.1}). Here, we note that, as usual, the Sobolev norm $H^s (0,1)$ is equivalent to the norm  $\left ( \sum_{n\in \Z} (1+n^2 )^s |c_n|^2\right )^{1/2} $ with $c_n$ as the Fourier coefficients of the function in $(0,1)$ and the interpolation theorems are applied if $s$ is not a positive integer.
        Moreover, if  $t\in [0,T]$ is fixed and $\phi $ is an odd extension of $\varphi$, (\ref{2.6}) implies
        \begin{align*}
            \|W_0(t)\varphi\|^2&_{L^2_{xy}(\R \times [0,1])} = \dfrac{1}{2} \|W_0(t)\phi\|^2_{L^2_{xy}(\R \times [-1,1])} \\
            &= \dfrac{1}{2} \int_{-\infty}^{\infty} \int_{-1}^1 \left|\int_{-\infty}^{\infty} \sum_{n\in\Z} e^{-i\pi^2\left(\xi^2+n^2\right)t+i\pi(x\xi+ny)} \cdot \widehat{\phi}(\xi,n) \, d\xi \right|^2 \, dy \, dx \\
            &\eqsim \int_{-\infty}^{\infty} \sum_{n\in\Z} \left|e^{-i\pi^2\left(\xi^2+n^2\right)t} \cdot \widehat{\phi}(\xi,n) \right|^2 \, d\xi \\
            &= \int_{-\infty}^{\infty} \sum_{n\in\Z} \left|\widehat{\phi}(\xi,n) \right|^2 \, d\xi = \left\|\phi\right\|^2_{L^2_{xy}(\R \times [-1,1])} \eqsim \left\|\varphi\right\|^2_{L^2_{xy}(\R \times [0,1])}\, .
        \end{align*}
        By a similar argument for its derivatives, the estimate (\ref{3.2}) is valid for any $s\ge0$.
    \end{proof}

    Note that in the derivation of (\ref{3.3}) given in \cite{Takaoka_NLSE_2d_Periodic}, the following result is proved.
    \begin{lemma}
        For $r\in [2,4]$ and any $\sigma>1/2$, let $f=f(x,y,t)$. Then
        \begin{equation}\label{3.4} 
            \|f\|_{L^r_{xyt} (\R \times [-1,1] \times [0,T]) \bigcap L^{\infty}_t\left([0,T]; \, L^2_{xy}(\R \times [-1,1])\right)} \lesssim \|f\|_{X^{\sigma,0} \left(\R \times [-1,1] \times [0,T]\right)}
        \end{equation}
        and from Definition~\ref{Bourgain_RT_defn} of the Bourgain space,
        \begin{equation}\label{3.5} 
            \left\|W_0 f\right\|_{L^r_{xyt} (\R \times [-1,1] \times [0,T])} \lesssim \left\|W_0 f\right\|_{X^{\sigma,0} \left(\R \times [-1,1] \times [0,T]\right)} = \|f\|_{H^{\sigma}_t \left([0,T]; \, L^2_{xy}(\R \times [-1,1]) \right)} \, .
        \end{equation}
    \end{lemma}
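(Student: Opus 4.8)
The lemma is precisely the Strichartz input that \cite{Takaoka_NLSE_2d_Periodic} isolates while proving \eqref{3.3}, so my plan is to recall the two-step mechanism behind it: a \emph{fixed-time} Strichartz inequality for the free Schr\"odinger group, followed by the standard transference (Bourgain-space embedding) argument. The only genuinely hard ingredient is the fixed-time estimate, which I would import verbatim from \cite{Takaoka_NLSE_2d_Periodic}: after the harmless rescaling turning the periodic-in-$y$ setting on $[-1,1]$ into the standard torus $\T=\R/\Z$, one has
\[
 \left\| e^{-it\Delta}\psi \right\|_{L^4_{t,x,y}([0,1]\times\R\times\T)} \lesssim \|\psi\|_{L^2_{x,y}(\R\times\T)} ,
\]
a bound that ultimately rests on Bourgain's circle-method count for the periodic propagator (cf.\ \cite{Bourgain_FrRc_NLSE}) rescued by dispersion in the $\R$-direction. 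Interpolating this with the trivial identity $\left\| e^{-it\Delta}\psi \right\|_{L^2_{t,x,y}([0,1]\times\R\times\T)}=\|\psi\|_{L^2_{x,y}}$ (unitarity of $e^{-it\Delta}$ on $L^2_{x,y}$ plus Fubini) yields $\left\| e^{-it\Delta}\psi \right\|_{L^r_{t,x,y}([0,1]\times\R\times\T)}\lesssim\|\psi\|_{L^2_{x,y}}$ for every $r\in[2,4]$. I expect this fixed-time estimate to be the sole real obstacle; everything below is soft.

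Granting it, the $L^r_{xyt}$ half of \eqref{3.4} follows by transference. Given $f$ with $\|f\|_{X^{\sigma,0}}<\infty$, set $v(t)=e^{it\Delta}f(t)$, so that $f(t)=e^{-it\Delta}v(t)$ and, directly from the definition of the Bourgain norm, $\|v\|_{H^\sigma_t L^2_{x,y}}=\|f\|_{X^{\sigma,0}}$. The temporal Fourier representation $v(t)=\int_{\R}e^{it\tau}\widehat v(\tau)\,d\tau$ exhibits $f$ as a superposition of free solutions,
\[
 f(t)=\int_{\R}e^{it\tau}\,e^{-it\Delta}\widehat v(\tau)\,d\tau .
\]
Minkowski's inequality in $L^r_{t\in[0,1]}L^r_{x,y}$, then the fixed-time Strichartz bound applied to each profile $\widehat v(\tau)\in L^2_{x,y}$ (the phase $e^{it\tau}$ has modulus one, hence is harmless), gives
\[
 \|f\|_{L^r_{t\in[0,1]}L^r_{x,y}} \le \int_{\R}\left\| e^{-it\Delta}\widehat v(\tau) \right\|_{L^r_{t\in[0,1]}L^r_{x,y}}d\tau \lesssim \int_{\R}\|\widehat v(\tau)\|_{L^2_{x,y}}\,d\tau ,
\]
and Cauchy--Schwarz against the weight $(1+|\tau|)^{\sigma}$, legitimate precisely because $\int_{\R}(1+|\tau|)^{-2\sigma}\,d\tau<\infty$ when $\sigma>\tfrac12$, bounds the last integral by $\|v\|_{H^\sigma_t L^2_{x,y}}=\|f\|_{X^{\sigma,0}}$. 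Covering $[0,T]$ by $O(1+T)$ unit time intervals and using time-translation invariance of the group (so each piece is controlled by a single global extension of $f$) promotes $[0,1]$ to $[0,T]$ at the cost of a $T$-dependent constant. For the $L^\infty_t L^2_{xy}$ half, the same representation gives $\|f(t)\|_{L^2_{x,y}}\le\int_{\R}\|\widehat v(\tau)\|_{L^2_{x,y}}\,d\tau$ for each fixed $t$ (since $e^{-it\Delta}$ is $L^2_{x,y}$-unitary), and the right-hand side is again $\lesssim\|v\|_{H^\sigma_t L^2_{x,y}}$ by the same Cauchy--Schwarz; this is just the embedding $H^\sigma_t\hookrightarrow L^\infty_t$ for $\sigma>\tfrac12$.

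Finally, \eqref{3.5} is an immediate corollary: applying the already proved inequality \eqref{3.4} with $u=W_0 f$ in place of $f$ gives $\|W_0 f\|_{L^r_{xyt}}\lesssim\|W_0 f\|_{X^{\sigma,0}}$, while $\|W_0 f\|_{X^{\sigma,0}}=\|f\|_{H^\sigma_t([0,T];L^2_{xy})}$ is built into the definition of the Bourgain norm ($W_0 f$ being the element of $X^{\sigma,0}$ whose free Schr\"odinger evolution is $f$). I would close by recording where the hypotheses are used: $r\le4$ is exactly the range of validity of the $\R\times\T$ fixed-time Strichartz inequality, while $\sigma>\tfrac12$ is forced both by the integrability of $(1+|\tau|)^{-2\sigma}$ in the two Cauchy--Schwarz steps and, equivalently, by the endpoint of $H^\sigma_t\hookrightarrow L^\infty_t$.
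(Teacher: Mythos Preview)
Your proposal is correct and is precisely the approach the paper has in mind: the paper does not give its own proof of this lemma but simply records that it is established in the course of proving \eqref{3.3} in \cite{Takaoka_NLSE_2d_Periodic}, and your sketch---fixed-time $L^4$ Strichartz on $\R\times\T$ from \cite{Takaoka_NLSE_2d_Periodic}, interpolation to $L^r$, then the standard transference to the Bourgain space via Fourier superposition of free solutions and Cauchy--Schwarz in $\tau$---is exactly the mechanism behind that reference. Your identification of \eqref{3.5} as an immediate consequence of \eqref{3.4} together with the definitional identity $\|W_0 f\|_{X^{\sigma,0}}=\|f\|_{H^\sigma_t L^2_{xy}}$ is also what the paper means by ``from Definition~\ref{Bourgain_RT_defn}.''
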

We note that \eqref{3.4} can be proved similarly as that in \cite{Takaoka_NLSE_2d_Periodic} if $f$ is extended to $\R$ periodically for the $y$-variable, where no boundary conditions are involved. Moreover, the formula of $W_0 (t) \varphi$ in \eqref{2.6} is same as the one derived for the semi-group $U(t) \varphi$ in \cite{Takaoka_NLSE_2d_Periodic} for periodic case, which implies that the estimates of $U(t) \varphi$ in \cite{Takaoka_NLSE_2d_Periodic} can be applied to obtain the estimates of $W_0 (t) \varphi$ here. One may also easily verify that, if $f$ is an odd function in $y$, then (\ref{3.4}) and (\ref{3.5}) can be equivalently rewritten over domain $(\R \times [0,1] \times [0,T])$. In the following, we obtain the estimates for $\Phi_{0,f} $.
    \begin{prop}\label{prop_3.3} 
        For $r \in [2,4]$ and $\sigma \in (1/2,1]$, there is a $q \in [4\sigma/(1+\sigma),2]$ such that
        \begin{align}
            & \left\|\Phi_{0,f}\right\|_{L^r \left(\R \times [0,1] \times [0,T]\right) \bigcap L^{\infty}_t \left([0,T]; \, L^2_{xy}(\R \times [0,1])\right)} \lesssim T^{1+\sigma -\frac{4\sigma}{q}} \|f\|_{L^q \left(\R \times [0,1] \times [0,T]\right)} \, , \label{3.6} \\ 
            & \left\|\Phi_{0,f}\right\|_{L^r \left([0,T]; \, W^{s,r}(\R \times [0,1])\right) \bigcap L^{\infty}_t \left([0,T]; \, H^s_{xy}(\R \times [0,1])\right)} \lesssim T^{1+\sigma -\frac{4\sigma}{q}} \|f\|_{L^q \left([0,T]; \, W^{s,q}(\R \times [0,1])\right)} \, , \label{3.7} \\[.08in] 
            & \left\|\Phi_{0,f}\right\|_{L^4 \left([0,T]; \, W^{s,4}(\R \times [0,1])\right) \bigcap L^{\infty}_t \left([0,T]; \, H^s_{xy}(\R \times [0,1])\right)} \lesssim C_T \|f\|_{L^{4/3} \left([0,T]; \, W^{s,4/3}(\R \times [0,1])\right)} \, , \label{3.8}
        \end{align}
        where $C_T$ only depends upon $T$.
    \end{prop}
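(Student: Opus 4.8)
The plan is to derive all three estimates from the homogeneous bounds of Proposition~\ref{prop_3.1} via a $TT^{*}$ argument combined with the Christ--Kiselev lemma, after first reducing to the case $s=0$. Since $\Phi_{0,f}(t)=i\int_{0}^{t}W_{0}(t-\tau)f(\tau)\,d\tau$ and $W_{0}$ is a Fourier multiplier in $x$ acting diagonally on the $y$-modes, $D^{\alpha}$ commutes with the integral through the representation \eqref{2.6}, reducing \eqref{3.7} for integer $s$ to \eqref{3.6} applied to the corresponding $x$- and $y$-derivatives of (the odd-in-$y$ extension of) $f$, exactly as for $D^{\alpha}W_{0}(t)\varphi$ in the proof of Proposition~\ref{prop_3.1}; non-integer $s$ then follows by interpolating the scales $L^{q}_{t}W^{s,q}_{xy}$ and $L^{r}_{t}W^{s,r}_{xy}$. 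So it suffices to prove \eqref{3.6}. For this, write $\Phi_{0,f}(t)=iW_{0}(t)\psi_{t}$ with $\psi_{t}:=\int_{0}^{t}W_{0}(-\tau)f(\tau)\,d\tau$, and note that, since $W_{0}(-t)$ is the adjoint of the (essentially isometric) operator $W_{0}(t)$ on the odd-in-$y$ subspace, the adjoint of $\varphi\mapsto W_{0}(t)\varphi$ as a map $L^{2}_{xy}\to L^{\tilde r}_{xyt}([0,T])$ is $g\mapsto\int_{0}^{T}W_{0}(-\tau)g(\tau)\,d\tau$, with the same operator norm.

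For \eqref{3.6} with $s=0$, Proposition~\ref{prop_3.1} gives $\|W_{0}(t)\psi\|_{L^{r}_{xyt}([0,T])}\lesssim(T^{1/2}+T^{1/2-\sigma})\|\psi\|_{L^{2}_{xy}}$ for $r\in[2,4]$, and dually $\|\psi_{t^{*}}\|_{L^{2}_{xy}}\lesssim(T^{1/2}+T^{1/2-\theta\sigma})\|f\|_{L^{q}_{xyt}([0,T])}$ for $q=\tilde r'\in[4/3,2]$, where the factor $T^{1/2-\theta\sigma}$ is obtained by interpolating the homogeneous bound between $\tilde r=2$ (bound $T^{1/2}$, the energy identity, $\theta=0$) and $\tilde r=4$ (bound $T^{1/2}+T^{1/2-\sigma}$, $\theta=1$), so that $\theta=2-4/\tilde r=4/q-2$. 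Multiplying these two bounds and, for $0<T\le1$ (the relevant regime; for larger $T$ one obtains a $T$-dependent constant by tiling), retaining the dominant factor $T^{1/2-\sigma}\cdot T^{1/2-\theta\sigma}=T^{1+\sigma-4\sigma/q}$ yields the non-retarded estimate for $f\mapsto\int_{0}^{T}W_{0}(t-\tau)f(\tau)\,d\tau$ in $L^{r}_{xyt}([0,T])$. Since $r\ge2\ge q$, the Christ--Kiselev lemma (applicable when $r>q$; the case $r=q=2$ follows directly from $\|\Phi_{0,f}\|_{L^{2}_{xyt}}\le T^{1/2}\|\Phi_{0,f}\|_{L^{\infty}_{t}L^{2}_{xy}}\lesssim T\|f\|_{L^{2}_{xyt}}$) upgrades this to the same bound for $\Phi_{0,f}$; the $L^{\infty}_{t}L^{2}_{xy}$ part follows from $\|\Phi_{0,f}(t^{*})\|_{L^{2}_{xy}}=\|\psi_{t^{*}}\|_{L^{2}_{xy}}$ and the dual estimate restricted to $[0,t^{*}]$, using that $1/2+2\sigma-4\sigma/q\ge1+\sigma-4\sigma/q$ for $\sigma\ge1/2$. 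The restriction $q\ge4\sigma/(1+\sigma)$ is precisely the condition that the exponent $1+\sigma-4\sigma/q$ be nonnegative, and \eqref{3.7} then follows by the reduction above.

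For the endpoint \eqref{3.8}, the pair $r=4$, $q=4/3$ is the limit of $q\downarrow4\sigma/(1+\sigma)$ as $\sigma\downarrow1/2$, at which the power in \eqref{3.6} degenerates to zero, so \eqref{3.6} cannot be used directly. Instead I would use the $T$-uniform local-in-time bound $\|W_{0}(t)\varphi\|_{L^{4}_{xyt}([0,T])}\le C_{T}\|\varphi\|_{L^{2}_{xy}}$ (obtained by restricting the $L^{4}$ estimate of \cite{Takaoka_NLSE_2d_Periodic} to $[0,T]$, with $C_{T}$ bounded for bounded $T$) together with its dual $\|\int_{0}^{T}W_{0}(-\tau)g(\tau)\,d\tau\|_{L^{2}_{xy}}\le C_{T}\|g\|_{L^{4/3}_{xyt}([0,T])}$; composing these, applying Christ--Kiselev (since $4>4/3$), and inserting the $s$-derivatives as before gives $\|\Phi_{0,f}\|_{L^{4}_{t}W^{s,4}_{xy}\cap L^{\infty}_{t}H^{s}_{xy}}\lesssim C_{T}\|f\|_{L^{4/3}_{t}W^{s,4/3}_{xy}}$.

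The main obstacle is the precise bookkeeping of the $T$-powers: to land exactly on $T^{1+\sigma-4\sigma/q}$ one must split the $TT^{*}$ composition so that one factor carries the $L^{r}$-loss $T^{1/2-\sigma}$ and the other the interpolated loss $T^{1/2-\theta\sigma}$ with $\theta=4/q-2$; a crude composition produces a strictly worse power, and keeping track of all four cross terms (and verifying $1+\sigma-4\sigma/q$ is their minimum exponent) is where the care is needed. Two secondary points also require attention: $W_{0}$ is not literally the free Schr\"odinger group but its realization on the odd-in-$y$ subspace, so the adjointness and near-isometry facts underlying the $TT^{*}$ and Christ--Kiselev steps must be read off from \eqref{2.6}; and passing $y$-derivatives through $W_{0}$ in \eqref{3.7}--\eqref{3.8} generates lower-order boundary contributions of the type estimated by the operators $W_{b_{j}}$, which must be absorbed there.
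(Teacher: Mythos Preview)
Your approach is correct but genuinely different from the paper's. The paper does not use a $TT^{*}$/Christ--Kiselev argument at all: instead it passes through the Bourgain space $X^{\sigma,0}$ on $\R\times\T\times\R$. Concretely, from the embedding \eqref{3.4} one gets by duality $\|f\|_{X^{-\sigma,0}}\lesssim\|f\|_{L^{4/3}}$, interpolates with the trivial $\|f\|_{X^{0,0}}=\|f\|_{L^{2}}$ to obtain $\|f\|_{X^{\sigma',0}}\lesssim\|f\|_{L^{q}}$ with $\sigma'=2\sigma(q-2)/q$, and then invokes Lemma~3.2 of Ginibre \cite{Ginibre_Periodic_semilin}, which directly gives the retarded bound $\big\|\int_{0}^{t}W_{0}(t-\tau)f(\tau)\,d\tau\big\|_{X^{\sigma,0}}\lesssim T^{1-(\sigma-\sigma')}\|f\|_{X^{\sigma',0}}$; chaining with \eqref{3.4} yields \eqref{3.6} with the exact power $1+\sigma-4\sigma/q$ in one stroke. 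For \eqref{3.8} the paper simply quotes Lemma~4.1 of \cite{Takaoka_NLSE_2d_Periodic} (which is the inhomogeneous $L^{4}\!\to\! L^{4/3}$ Strichartz estimate on $\R\times\T$) rather than redoing a $TT^{*}$ step.

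What each approach buys: the Bourgain-space route handles the retarded integral and the precise $T$-exponent cleanly via Ginibre's lemma, with no cross-term bookkeeping and no separate treatment of the endpoint $r=q=2$; it also gives the correct power for all $T$, not only $T\le 1$. Your route is more elementary in that it avoids the $X^{\sigma,0}$ machinery entirely, relying only on Proposition~\ref{prop_3.1}, its refinement by interpolation between $r=2$ and $r=4$, duality, and Christ--Kiselev; the price is exactly the bookkeeping you flag, and the fact that for large $T$ your composed constant $(T^{1/2}+T^{1/2-\sigma})(T^{1/2}+T^{1/2-\theta\sigma})$ is dominated by $T$ rather than $T^{1+\sigma-4\sigma/q}$. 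Two small remarks: (i) your concern about boundary contributions when passing $y$-derivatives through $W_{0}$ is unnecessary --- the paper uses the Fourier-multiplier derivative $\tilde D^{\alpha_{2}}_{y}$ acting on the sine coefficients (equivalently, ordinary $\partial_{y}$ on the odd periodic extension), which commutes with $W_{0}$ without any $W_{b_{j}}$-type remainder; (ii) the unitarity/adjointness facts you need for $TT^{*}$ are indeed immediate from \eqref{2.6}, since $W_{0}(t)$ is exactly the free Schr\"odinger group on $\R\times\T$ restricted to the odd-in-$y$ subspace.
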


    \begin{proof}
        Choose $\sigma \in (\frac{1}{2},1]$ as that in (\ref{3.4}) and $q \in [\frac{4\sigma}{(1+\sigma)},2]$. Using duality on (\ref{3.4}) with $r=4$, we are able to see
        \begin{equation*}
            \|f\|_{X^{-\sigma,0} \left(\R \times [0,1] \times [0,T]\right)} \lesssim \|f\|_{L^{\frac{4}{3}} \left(\R \times [0,1] \times [0,T]\right)}
        \end{equation*}
        as well as
        \begin{equation*}
            \|f\|_{X^{0,0} \left(\R \times [0,1] \times [0,T]\right)} \lesssim \|f\|_{L^2 \left(\R \times [0,1] \times [0,T]\right)}\, .
        \end{equation*}
        By interpolation, we can obtain
        \begin{equation}\label{3.9} 
            \|f\|_{X^{\sigma',0} \left(\R \times [0,1] \times [0,T]\right)} \lesssim \|f\|_{L^q \left(\R \times [0,1] \times [0,T]\right)}
        \end{equation}
        with $\frac{1}{q} = \frac{a}{4/3} + \frac{1-a}{2}$ and $\sigma' = a(-\sigma)$ for $0\le a \le1$. Therefore
        \begin{equation*}\label{NLSE_IB_2d_stp_loc_nonlin_w0_Bourgain_sigma_interp_pair} 
            \sigma' = \frac{2\sigma(q-2)}{q}
        \end{equation*}
        where $\frac{4}{3}<\frac{4\sigma}{1+\sigma} \le q \le 2$ for $\sigma>\frac{1}{2}$. It is easy to check that $\sigma'\le0$, $$-\sigma' = 4\sigma\left(\dfrac{1}{q}-\dfrac{1}{2}\right) \le 4\sigma\left(\dfrac{1+\sigma}{4\sigma}-\dfrac{1}{2}\right) = 1-\sigma < \dfrac{1}{2} \, , \quad \mbox{and}\quad 1-(\sigma-\sigma') = 1+\sigma-\dfrac{4\sigma}{q} \ge 0 \, .$$
        Moreover, we may choose $-\frac{1}{2} < \sigma' \le 0 \le \sigma \le \sigma'+1$ in order to use Lemma 3.2 in \cite{Ginibre_Periodic_semilin}. Thus, from this lemma in \cite{Ginibre_Periodic_semilin},  (\ref{3.4}) with $s=0$, and (\ref{3.9}), we have
        \begin{align*}
            \left\|\Phi_{0,f}\right\|&_{L^r \left(\R \times [0,1] \times [0,T]\right) \bigcap L^{\infty}_t \left([0,T]; \, L^2_{xy}(\R \times [0,1])\right)} \\
            & = \left\|\int_0^t W_0(t-\tau)f(\tau) \, d\tau\right\|_{L^r \left(\R \times [0,1] \times [0,T]\right) \bigcap L^{\infty}_t \left([0,T]; \, L^2_{xy}(\R \times [0,1])\right)} \\
            & \lesssim \left\|\int_0^t W_0(t-\tau)f(\tau) \, d\tau\right\|_{X^{\sigma,0} \left(\R \times [0,1] \times [0,T]\right)} \\
            & \lesssim T^{1-(\sigma-\sigma')} \|f\|_{X^{\sigma',0} \left(\R \times [0,1] \times [0,T]\right)} \lesssim T^{1+\sigma-\frac{4\sigma}{q}} \|f\|_{L^q \left(\R \times [0,1] \times [0,T]\right)}\, .
        \end{align*}
        For $s>0$ and $|\alpha|=\alpha_1 + \alpha_2 = s$, by using a similar idea for $D^\alpha W_0 (t) \varphi$, we can derive the following estimates using (\ref{3.4}) again, with the same $\sigma$ and $\sigma'$ defined above:
        \begin{align*}
            \left\|D^{\alpha}_{xy} \Phi_{0,f}\right\|&_{L^r \left(\R \times [0,1] \times [0,T]\right) \bigcap L^{\infty}_t \left([0,T]; \, L^2_{xy}(\R \times [0,1])\right)} \\
            &\simeq \left\|\int_0^t W_0(t-\tau)  \tilde D^{\alpha_2}_y {\left(D^{\alpha_1}_{x}f(\tau)\right)} \, d\tau\right\|_{L^r \left(\R \times [0,1] \times [0,T]\right) \bigcap L^{\infty}_t \left([0,T]; \, L^2_{xy}(\R \times [0,1])\right)} \\
            & \lesssim \left\|\int_0^t W_0(t-\tau) \tilde D^{\alpha_2}_y { \left(D^{\alpha_1}_{x}f(\tau)\right) }\, d\tau\right\|_{X^{\sigma,0} \left(\R \times [0,1] \times [0,T]\right)} \\
            & \lesssim T^{1-(\sigma-\sigma')} \| \tilde D^{\alpha_2}_y { \left(D^{\alpha_1}_{x}f \right ) } \|_{X^{\sigma',0} \left(\R \times [0,1] \times [0,T]\right)} \le T^{1+\sigma-\frac{4\sigma}{q}} \| f\|_{L^q \left([0,T], W^{s, q} (\R \times [0,1]) \right)}\, ,
        \end{align*}
        where $\tilde D_y^{\alpha_2}  f(y) $ denotes the function obtained from $f (y)$ with its Fourier coefficients as $n^{\alpha_2} c_n, n \in \Z$ and $c_n, n\in \Z$ as the Fourier coefficients of $f$ with odd extension to $[-1, 1]$.
Hence, both (\ref{3.6}) and (\ref{3.7}) are proved.

To prove (\ref{3.8}), for $s = 0$, Lemma 4.1 of \cite{Takaoka_NLSE_2d_Periodic} and the above estimates for $ q= 3/4$ yield
\begin{align*}
& \left\|\Phi_{0,f}\right\|_{L^4 \left ([0,T]\times \R \times [0,1]\right ) \bigcap L^{\infty}_t \left([0,T]; \, L^2_{xy}(\R \times [0,1])\right)} \lesssim C_T \|f\|_{L^{4/3} \left([0,T]\times \R \times [0,1]\right)} \, .
\end{align*}
For an integer $s>0$ and $|\alpha|=\alpha_1 + \alpha_2 = s$ with nonnegative integers $\alpha_1$ and $\alpha_2$,
\begin{align*}
            \left\|D^{\alpha}_{xy} \Phi_{0,f}\right\|&_{L^4 \left([0,T]\times \R \times [0,1]\right) \bigcap L^{\infty}_t \left([0,T]; \, L^2_{xy}(\R \times [0,1])\right)} \\
            &\simeq \left\|\int_0^t W_0(t-\tau)  \tilde D^{\alpha_2}_y {\big (D^{\alpha_1}_{x}f(\tau)\big )} \, d\tau\right\|_{L^4 \left([0,T]\times \R \times [0,1]\right) \bigcap L^{\infty}_t \left([0,T]; \, L^2_{xy}(\R \times [0,1])\right)} \\
            & \lesssim C_T \|\tilde D^{\alpha_2}_y {\left(D^{\alpha_1}_{x}f(\tau)\right)} \|_{L^{4/3} \left([0,T]\times \R \times [0,1]\right)} \\
            & \lesssim C_T  \| f\|_{L^{4/3} \left([0,T], W^{s, {4/3}} (\R \times [0,1]) \right)}\, .
        \end{align*}
Then, a classical interpolation theorem gives the inequality for a non-integer $ s > 0$ and (\ref{3.8}) is proved. Here, we note that $W_0 (t) $ defined in Proposition \ref{prop_2.2} can be considered as an integral operator and does not require the condition that $\varphi$ or $f$ is zero at the boundary. In the proof of (\ref{3.8}) for a positive integer $ s$, the derivative is directly taken to the operator $W_0 (t- \tau ) f $, which, by the definition of $W_0 (t) $ in Proposition \ref{prop_2.2}, can be transferred to $f$. Therefore, no boundary conditions are required for $f$ in the proof of (\ref{3.8}).
\end{proof}

    Now, we turn our attention to the operator $W_b[h_1,h_2](x,y,t)$ (or $W_b[h]$).
    \begin{prop}\label{prop_3.4} 
        For $r\in [2,4]$ and $ s \geq 0$, if $h \in \HH^s(0,T)$, then
        \begin{align}
            \left\|W_b[h_1,h_2]\right\|&_{L^4_{xyt} \left(\R \times [0,1] \times [0,T]\right) \bigcap L^{\infty}_t \left([0,T]; \, L^2_{xy}(\R \times [0,1])\right)} \lesssim \sum_{j=1,2} \|h_j\|_{\HH^0(0,T)}\, , \label{3.10} \\
            \left\|W_b[h_1,h_2]\right\|&_{L^r_t \left([0,T]; \, W^{s,r}_{xy}(\R \times [0,1]) \right) \bigcap L^{\infty}_t \left([0,T]; \, H^s_{xy}(\R \times [0,1])\right)} \lesssim \sum_{j=1,2} \|h_j\|_{\HH^s(0,T)} \, . \label{3.11} 
        \end{align}
    \end{prop}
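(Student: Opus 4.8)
The plan is to reduce everything to the single operator $W_{b_1}$ and then to prove its Strichartz-type bounds by a frequency-space analysis modelled on the one-dimensional treatment in \cite{Sun_NLSE_1d}, the half-plane treatment in \cite{Ran_NLSE_2d_uhp}, and the $\R\times\T$ periodic Strichartz estimates of \cite{Bourgain_FrRc_NLSE, Takaoka_NLSE_2d_Periodic}. First I would use the pointwise bound $|W_b[h_1,h_2]|\le|W_{b_1}[h_1]|+|W_{b_1}[h_2]|$ noted after Proposition~\ref{prop_2.5}, so that it suffices to prove \eqref{3.10} and \eqref{3.11} for $W_{b_1}[h]$ alone, with the right-hand side replaced by $\|h\|_{\HH^s(0,T)}$. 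I would then fix an extension $w\in\HH^s(\R)$ of $h$ with $\|w\|_{\HH^s(\R)}\le2\|h\|_{\HH^s(0,T)}$; since $W_{b_1}[h]$ uses $h$ only for times in $[0,T]$ and $w=h$ there, one has $W_{b_1}[h]=W_{b_1}[w]$ on $[0,T]$, so after inserting a smooth temporal cut-off $\psi$ with $\psi\equiv1$ on $[0,T]$ we may work with the globally defined function $\psi(t)\,W_{b_1}[w]$, which is odd and $2$-periodic in $y$. Finally, as in the proofs of Propositions~\ref{prop_3.1} and \ref{prop_3.3}, applying $D^{\alpha}_{xy}$ to \eqref{2.8} merely inserts the Fourier multipliers $\xi^{\alpha_1}n^{\alpha_2}$ into the representation (in $y$ through the odd $2$-periodic extension), and since $|\xi|^{\alpha_1}|n|^{\alpha_2}\le(1+|\lambda|+|\xi|^2+n^2)^{s/2}$ whenever $|\alpha|=\alpha_1+\alpha_2\le s$, these extra powers are absorbed into the weights; non-integer $s$ follows by interpolation. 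It therefore suffices to establish the frequency-space estimate behind the case $s=0$.

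For that estimate I would carry out the $\tau$-integration in \eqref{2.8}: writing $\widehat{w^x}(\xi,\tau)=\int_\R e^{i\lambda\tau}\widehat w(\xi,\lambda)\,d\lambda$ gives, for each $(\xi,n)$,
\begin{equation*}
\int_0^t e^{-i\pi^2(\xi^2+n^2)(t-\tau)}\widehat{w^x}(\xi,\tau)\,d\tau=\int_\R\frac{e^{i\lambda t}-e^{-i\pi^2(\xi^2+n^2)t}}{i\bigl(\lambda+\pi^2(\xi^2+n^2)\bigr)}\,\widehat w(\xi,\lambda)\,d\lambda .
\end{equation*}
I would then split $W_{b_1}[w]=W^{\flat}+W^{\sharp}$ according to whether the $y$-frequency obeys $n^2\lesssim1+|\lambda|+\xi^2$ or $n^2\gg1+|\lambda|+\xi^2$. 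The main term is $W^{\flat}$: on the genuinely resonant band $|\lambda+\pi^2(\xi^2+n^2)|\lesssim1$ one has $\xi^2+n^2\approx|\lambda|$, so simultaneously $n^2\lesssim1+|\lambda|$ and $\xi^2\lesssim1+|\lambda|$, whence $1+|\lambda|+\xi^2+n^2\lesssim1+|\lambda|+|\xi|$; consequently the loss $|n|$ created by the $y$-derivative jump at the boundary is \emph{exactly} dominated by the factor $(1+|\lambda|+|\xi|)^{1/2}$, i.e. by the additional quarter-derivative in $t$ built into the $\HH^s$-norm (this is why, for a problem on a finite domain, $\HH^s$ must carry a quarter-derivative more in $t$ than the space optimal for the analogous problem in the half-plane). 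Interpolating this observation, in the resonant band, against the trivial bound of the absolute value of the displayed kernel by $|\psi\,t|$, and using that off the resonant band (but still on $W^{\flat}$) the denominator supplies a gain $\lesssim(1+|\lambda|+\xi^2+n^2)^{-1}$, I would verify by a direct computation of the $X^{\sigma,s}$-norm that $\psi\,W^{\flat}\in X^{\sigma,s}$ with $\|\psi\,W^{\flat}\|_{X^{\sigma,s}(0,T)}\lesssim\|w\|_{\HH^s(\R)}$ for some $\sigma\in(\tfrac12,1)$. Since $W^{\flat}$ is odd in $y$, the Strichartz estimates \eqref{3.4}--\eqref{3.5}, rewritten over $\R\times[0,1]\times[0,T]$, then give the desired $L^r_tW^{s,r}_{xy}\cap L^\infty_tH^s_{xy}$ bound for $W^{\flat}$.

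For $W^{\sharp}$ the denominator is large, $|\lambda+\pi^2(\xi^2+n^2)|\gtrsim1+n^2+\xi^2$, so the Duhamel kernel is already small there, of size $\lesssim|n|^{-1}$ times the data after one power of the denominator is spent; this part is \emph{not} captured by $X^{\sigma,s}$ with $\sigma>\tfrac12$, so I would estimate it by hand. The $L^\infty_tH^s_{xy}$-bound follows from Cauchy--Schwarz in $\lambda$ (the arising logarithm being absorbed by the $|n|^{-1}$ decay when summing in $n$, the $\HH^s$-weight handling the $\xi,\lambda$-integration), while the $L^r_tW^{s,r}_{xy}$-bound for $r\in(2,4]$ is obtained from the $\R\times\T$ $L^4$-Strichartz machinery of \cite{Takaoka_NLSE_2d_Periodic} applied dyadically in the relative sizes of $n^2$ and $1+|\lambda|+\xi^2$; the assumed compatibility conditions enter here to control the $\tau=0$ contribution produced when integrating by parts in $\tau$. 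Adding the contributions of $W^{\flat}$ and $W^{\sharp}$ proves \eqref{3.10} and \eqref{3.11}.

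The hardest part, I expect, is exactly this $n$-versus-$t$ bookkeeping. The operator $W_{b_1}[h]$ is a priori only a distribution, because the Fourier series in $y$ loses a full derivative through the factor $n$ in \eqref{2.8}; the whole content of the proposition is that the output is nevertheless a function in the sharp space. Near the Schr\"odinger characteristic surface this requires the precise matching above, and away from the surface it cannot be done through Strichartz estimates alone and needs the periodic $L^4$ harmonic analysis of \cite{Bourgain_FrRc_NLSE, Takaoka_NLSE_2d_Periodic}, together with care at $\tau=0$. A subsidiary point is that the $L^\infty_tH^s_{xy}$ endpoint (a trace-type estimate) and the $L^r_tW^{s,r}_{xy}$ bounds for $r\in(2,4]$ (the genuine Strichartz input) are proved by somewhat different arguments.
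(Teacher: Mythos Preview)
Your outline for $s=0$ is broadly in the spirit of the paper's argument (pass to the Fourier side, isolate the resonant band, use the $\R\times\T$ Strichartz input of \cite{Bourgain_FrRc_NLSE,Takaoka_NLSE_2d_Periodic} for the $L^4$ bound), though the paper's decomposition is finer and rather different in detail: it splits first by the sign of $\lambda$, then into a near-resonance piece $I^+_1$ (handled by Taylor-expanding the numerator, not by an interpolation against $|\psi t|$), an ``inhomogeneous'' off-band piece $I^+_2$ (itself cut into $I^+_{2,1},I^+_{2,2},I^+_{2,3}$ and treated via \eqref{3.5}, Hausdorff--Young, the pointwise series bound of Proposition~3.6 in \cite{Sun_NLSE_1d}, and the closed form $\sum_{n\ge1}\frac{n\sin nx}{n^2+a^2}=\frac{\pi}{2}\frac{\sinh a(\pi-x)}{\sinh a\pi}$), a ``homogeneous'' off-band piece $I^+_3$, and the $\lambda<0$ pieces $I^-_1,I^-_2$. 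No dyadic $L^4$ machinery or compatibility conditions are used anywhere; the latter play no role in this purely linear estimate, so your remark that they ``enter here to control the $\tau=0$ contribution'' is mistaken.

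There is, however, a genuine gap in your reduction of $s>0$ to $s=0$. You bound $|\xi|^{\alpha_1}|n|^{\alpha_2}\le(1+|\lambda|+|\xi|^2+n^2)^{s/2}$ and say ``these extra powers are absorbed into the weights,'' but the $\HH^s$-weight on the boundary datum is $(1+|\lambda|+|\xi|)^{1/2}(1+|\lambda|+|\xi|^2)^{s/2}$: it contains no $n$, because $h(x,t)$ has no $y$-variable. On your region $W^{\flat}$ this is harmless since $n^2\lesssim1+|\lambda|+\xi^2$ there, but on $W^{\sharp}$, where $n^2\gg1+|\lambda|+\xi^2$, the factor $|n|^{\alpha_2}$ cannot be traded for powers of $(1+|\lambda|+|\xi|^2)$. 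Your ``by hand'' estimate for $W^{\sharp}$ gains only one power of $|n|^{-1}$ from the denominator (as you say), which after Cauchy--Schwarz in $\lambda$ leaves a sum $\sum_n n^{2\alpha_2-2}\log n$; this diverges already for $\alpha_2=1$. The paper circumvents the issue entirely by a different device: since $u=W_b[h_1,h_2]$ solves $iu_t+u_{xx}+u_{yy}=0$, one has $\partial_y^2u=-i\partial_tu-\partial_x^2u$ and hence $\|\partial_y^{2m}u\|_X\le\sum_k\binom{m}{k}\|\partial_t^k\partial_x^{2(m-k)}u\|_X$, and each $\partial_t^k\partial_x^{2(m-k)}u$ solves the \emph{same} linear IBVP with boundary data $\partial_t^k\partial_x^{2(m-k)}h_j$. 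This converts all $y$-derivatives into $(t,x)$-derivatives on the data, which are exactly what the $\HH^s$-weight controls, and reduces the $s>0$ case cleanly to the $s=0$ estimate already proved.
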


    \begin{proof}
        In this proof, we use $W_b$ for $W_{b_1}$ unless it is indicated otherwise.
        We first let
        \begin{equation}\label{3.12}
            \widetilde{h}(\xi,\lambda) = \int_0^T e^{i\pi^2\lambda t} \widehat{h^x}(\xi,t) \, dt \eqsim \widehat{h}(\xi,-\lambda) \, .
        \end{equation}
        Here, without loss of generality, we assume that the support of $ h$ with respect to $t$ is inside of $[0, T]$ (otherwise, we could just multiply $h$ by a smooth cut-off function). Thus, $\widehat{h^x}(\xi,t) \eqsim \int_{-\infty}^{\infty} e^{-i\pi^2\lambda t} \widetilde{h}(\xi,\lambda) \, d\lambda$.
        Substitute (\ref{3.12}) into (\ref{2.8}) and obtain
        \begin{align*}
            W_b [h] &(x,y,t) \\
            &\eqsim \int_{-\infty}^{\infty} \sum_n \left[e^{-i\pi^2\left(\xi^2+n^2\right)t+i\pi(x\xi+ny)} \cdot n \int_0^t e^{i\pi^2\left(\xi^2+n^2\right)\tau} \left(\int_{-\infty}^{\infty} e^{-i\pi^2\lambda\tau} \widetilde{h}(\xi,\lambda) \, d\lambda\right) \, d\tau\right] \, d\xi \\
            &= \int_{-\infty}^{\infty} \sum_n \left[e^{-i\pi^2\left(\xi^2+n^2\right)t+i\pi(x\xi+ny)} \cdot n \int_{-\infty}^{\infty} \left(\int_0^t e^{i\pi^2\left(\xi^2+n^2-\lambda\right)\tau} \, d\tau \right) \widetilde{h}(\xi,\lambda) \, d\lambda\right] \, d\xi \\
            &= \int_{-\infty}^{\infty} \sum_n \left[e^{-i\pi^2\left(\xi^2+n^2\right)t+i\pi(x\xi+ny)} \cdot n \int_{-\infty}^{\infty} \dfrac{e^{i\pi^2\left(\xi^2+n^2-\lambda\right)t}-1}{i\pi^2\left(\xi^2+n^2-\lambda\right)} \widetilde{h}(\xi,\lambda) \, d\lambda\right] \, d\xi \\
            &= \dfrac{1}{i\pi^2} \left(I^+ + I^-\right)\, ,
        \end{align*}
        where
        \begin{align*}
            I^+ &= \int_{-\infty}^{\infty} \sum_n \left(e^{-i\pi^2\left(\xi^2+n^2\right)t+i\pi(x\xi+ny)} \cdot n \int_0^{\infty} \dfrac{e^{i\pi^2\left(\xi^2+n^2-\lambda\right)t}-1}{\xi^2+n^2-\lambda} \cdot \widetilde{h}(\xi,\lambda) \, d\lambda\right) \, d\xi\, , \\
            I^- &= \int_{-\infty}^{\infty} \sum_n \left(e^{-i\pi^2\left(\xi^2+n^2\right)t+i\pi(x\xi+ny)} \cdot n \int_0^{\infty} \dfrac{e^{i\pi^2(\xi^2+n^2+\lambda)t}-1}{\xi^2+n^2+\lambda} \cdot \widetilde{h}(\xi,-\lambda) \, d\lambda\right) \, d\xi\, .
        \end{align*}
        We split $I^+$ and $I^-$ as follows:
        \begin{align*}
            & I^+ = I^+_1 + I^+_2 +I^+_3
        \end{align*}
        with
        \begin{align} 
            & I^+_1 = \int_{-\infty}^{\infty} \sum_n \left(e^{-i\pi^2\left(\xi^2+n^2\right)t+i\pi(x\xi+ny)}  n \int_0^{\infty} \dfrac{e^{i\pi^2\left(\xi^2+n^2-\lambda\right)t}-1}{\xi^2+n^2-\lambda} \cdot \widetilde{h}(\xi,\lambda) \psi\left(\xi^2+n^2-\lambda\right) \, d\lambda\right) \, d\xi  \, ,\label{3.13} \\
            & I^+_2 = \int_{-\infty}^{\infty} \sum_n \left(e^{i\pi(x\xi+ny)}  n \int_0^{\infty} \dfrac{e^{-i\pi^2\lambda t}}{\xi^2+n^2-\lambda} \cdot \widetilde{h}(\xi,\lambda) \left(1-\psi\left(\xi^2+n^2-\lambda\right)\right) \, d\lambda\right) \, d\xi\, , \label{3.14} \\
            & I^+_3 = \int_{-\infty}^{\infty} \sum_n \left(e^{-i\pi^2\left(\xi^2+n^2\right)t+i\pi(x\xi+ny)}  n \int_0^{\infty} \dfrac{-1}{\xi^2+n^2-\lambda} \cdot \widetilde{h}(\xi,\lambda) \left(1-\psi\left(\xi^2+n^2-\lambda\right)\right) \, d\lambda\right) \, d\xi\, , \label{3.15}
        \end{align}
        where a cut-off function $\psi(x)$ is defined by $\psi\in \mathcal{D}(\R )$ and
        \begin{equation*}\label{cutoff}
            \psi(x) =
            \left\{
            \begin{array}{ll}
                1 \text{, } & |x|\le1 \, ,\\
                0 \text{, } & |x|\ge2\, .
            \end{array}
            \right.
        \end{equation*}

        Similarly, rewrite
        \begin{align*}
            I^- &\eqsim \int_{-\infty}^{\infty} \sum_n \left(e^{-i\pi^2\left(\xi^2+n^2\right)t+i\pi(x\xi+ny)} \cdot n \int_0^{\infty} \dfrac{e^{i\pi^2(\xi^2+n^2+\lambda)t}-1}{\xi^2+n^2+\lambda} \widetilde{h}(\xi,-\lambda) \, d\lambda\right) \, d\xi \\
            & = \int_{-\infty}^{\infty} \sum_n \left(e^{i\pi(x\xi+ny)} \cdot n \int_0^{\infty} \dfrac{e^{i\pi^2\lambda t}-e^{-i\pi^2\left(\xi^2+n^2\right)t}}{\xi^2+n^2+\lambda} \widetilde{h}(\xi,-\lambda) \, d\lambda\right) \, d\xi \\
            & = I^-_1 - I^-_2
        \end{align*}
        with
        \begin{align}
            I^-_1 &= \int_{-\infty}^{\infty} \sum_n \left(e^{i\pi(x\xi+ny)} \cdot n \int_0^{\infty} \dfrac{e^{i\pi^2\lambda t}}{\xi^2+n^2+\lambda} \widetilde{h}(\xi,-\lambda) \, d\lambda\right) \, d\xi \, , \label{3.16} \\
            I^-_2 &= \int_{-\infty}^{\infty} \sum_n \left(e^{i\pi(x\xi+ny)} \cdot n \int_0^{\infty} \dfrac{e^{-i\pi^2\left(\xi^2+n^2\right)t}}{\xi^2+n^2+\lambda} \widetilde{h}(\xi,-\lambda) \, d\lambda\right) \, d\xi \, . \label{3.17}
        \end{align}

        We first study $I^+_1$. Rewrite $e^{i\pi^2\left(\xi^2+n^2- \lambda \right)t}$ in (\ref{3.13}) as its Taylor expansion and obtain that
        $
            \displaystyle I^+_1 = \sum_{k=0}^{\infty} I^+_{1,k}\, ,
        $
        where
        \begin{align*}
            I^+_{1,k} &= \dfrac {\left (i\pi^2 \cdot t\right)^{k+1}}{(k+1)!} \cdot \bigg [\int_{-\infty}^{\infty} \sum_n e^{-i\pi^2\left(\xi^2+n^2\right)t+i\pi(x\xi+ny)} \\
            & \qquad \times \left(n \int_0^{\infty} \left(\xi^2+n^2-\lambda\right)^k \cdot \widetilde{h}(\xi,\lambda) \psi\left(\xi^2+n^2-\lambda\right)\right) \, d\lambda \, d\xi \bigg ] \, .
        \end{align*}
        Note that $\psi\left(\xi^2+n^2-\lambda\right)\not\equiv 0$ for $-2\le\xi^2+n^2-\lambda\le2$, i.e. $-2+\lambda \le n^2+\xi^2 \le 2+\lambda$. For each $I^+_{1,k}$, apply (\ref{3.2}) and (\ref{3.3}) to obtain
        \begin{align*}
            & \|I^+_{1,k}\|_{L^4_{xyt} \left(\R \times [0,1] \times [0,T]\right) \bigcap L^{\infty}_t \left([0,T]; \, L^2_{xy}(\R \times [0,1])\right)} \\
            &\le \dfrac{\left(\pi^2 \cdot T\right)^{k+1}}{(k+1)!} \bigg \|\int_{-\infty}^{\infty} \sum_n e^{-i\pi^2\left(\xi^2+n^2\right)t+i\pi(x\xi+ny)}  \\
            & \qquad \times \left(n \int_0^{\infty} \left(\xi^2+n^2-\lambda\right)^k \cdot \widetilde{h}(\xi,\lambda) \psi\left(\xi^2+n^2-\lambda\right)\right) \, d\lambda \, d\xi \bigg \|_{L^4 \bigcap L^{\infty}_t \left(L^2_{xy}\right)} \\
            &\lesssim \dfrac{\left(\pi^2 \cdot T\right)^{k+1}}{(k+1)!} \left\|n \cdot \int_0^{\infty}\left(\xi^2+n^2-\lambda\right)^k \cdot \widetilde{h}(\xi,\lambda) \cdot \psi\left(\xi^2+n^2-\lambda\right) \, d\lambda \right\|_{L^2_{\xi n}} \\
            &= \dfrac{\left(\pi^2 \cdot T\right)^{k+1}}{(k+1)!} \left\{\int_{-\infty}^{\infty} \sum_n \left|n \cdot \int_0^{\infty}\left(\xi^2+n^2-\lambda\right)^k \cdot \widetilde{h}(\xi,\lambda) \cdot \psi\left(\xi^2+n^2-\lambda\right) \, d\lambda \right|^2 \, d\xi \right\}^{\frac{1}{2}} \\
            &\le \dfrac{\left(\pi^2 \cdot T\right)^{k+1}}{(k+1)!} \left\{\int_{-\infty}^{\infty} \sum_n \left(\int_0^{\infty} |n| \cdot \left|\xi^2+n^2-\lambda\right|^k \cdot \left|\widetilde{h}(\xi,\lambda)\right| \cdot \left|\psi\left(\xi^2+n^2-\lambda\right)\right| \, d\lambda \right)^2 \, d\xi \right\}^{\frac{1}{2}} \\
            &\lesssim \dfrac{2^k \left(\pi^2 \cdot T\right)^{k+1}}{(k+1)!} \bigg \{\int_{-\infty}^{\infty} \sum_n \left(\int_0^{\infty} \left(1+\xi^2+|\lambda|\right) \cdot \left|\widetilde{h}(\xi,\lambda) \right|^2 \left|\psi\left(\xi^2+n^2-\lambda\right)\right| \, d\lambda \right)  \\
            & \qquad \times \left(\int_0^{\infty} \left|\psi\left(\xi^2+n^2-\lambda\right)\right| \, d\lambda \right) \, d\xi \bigg \}^{\frac{1}{2}} \\
            &\le \dfrac{2^k \left(\pi^2 \cdot T\right)^{k+1}}{(k+1)!} \left[\int_{-\infty}^{\infty} \sum_n \left(\int_{\xi^2+n^2-2}^{\xi^2+n^2+2} \left(1+\xi^2+|\lambda|\right) \cdot \left|\widetilde{h}(\xi,\lambda) \right|^2 \, d\lambda\right) \cdot \left(\int_{\xi^2+n^2-2}^{\xi^2+n^2+2} 1 \, d\lambda\right) \, d\xi \right]^{\frac{1}{2}} \\
            &\lesssim \dfrac{\left(2 \pi^2 \cdot T\right)^{k+1}}{(k+1)!} \left[\int_{-\infty}^{\infty} \left(\sum_n \int_{\xi^2+n^2-2}^{\xi^2+n^2+2} \left(1+|\lambda|\right) \cdot \left|\widetilde{h}(\xi,\lambda) \right|^2 \, d\lambda\right) \, d\xi \right]^{\frac{1}{2}}\, .
        \end{align*}
        Since $(n+1)^2-2 > n^2+2$ for $n\ge1$, the sequence of intervals $$\left\{\left[\xi^2+n^2-2,\xi^2+n^2+2\right]\right\}_{n\in\Z}$$ is disjoint. Thus, we find that
        \begin{align*}
             \|I^+_{1,k}&\|_{L^4_{xyt} \left(\R \times [0,1] \times [0,T]\right) \bigcap L^{\infty}_t \left([0,T]; \, L^2_{xy}(\R \times [0,1])\right)} \\
            &\lesssim \dfrac{\left(2 \pi^2 \cdot T\right)^{k+1}}{(k+1)!} \left[\int_{-\infty}^{\infty} \int_0^{\infty} \left(1+|\lambda|\right) \left|\widetilde{h}(\xi,\lambda) \right|^2 \, d\lambda \, d\xi \right]^{\frac{1}{2}} = \dfrac{\left(2 \pi^2 \cdot T\right)^{k+1}}{(k+1)!} \|h\|_{H^{\frac{1}{2}}_t \left([0,T]; \, L^2_x(\R) \right)}\, .
        \end{align*}
        Hence,
        \begin{align}
            \|I^+_1\|&_{L^4_{xyt} \left(\R \times [0,1] \times [0,T]\right) \bigcap L^{\infty}_t \left([0,T]; \, L^2_{xy}(\R \times [0,1])\right)} \le \sum_{k=0}^{\infty} \|I^+_{1,k}\|_{L^4_{xyt} \left(\R \times [0,1] \times [0,T]\right) \bigcap L^{\infty}_t \left([0,T]; \, L^2_{xy}(\R \times [0,1])\right)} \nonumber \\
            &\lesssim \sum_{k=0}^{\infty} \dfrac{\left(2 \pi^2 \cdot T\right)^{k+1}}{(k+1)!} \|h\|_{H^{\frac{1}{2}}_t \left([0,T]; \, L^2_x(\R) \right)} \le \|h\|_{H^{\frac{1}{2}}_t \left([0,T]; \, L^2_x(\R) \right)} \, . \label{3.18}
        \end{align}
        Next, we consider the term $I^+_2$ in (\ref{3.14}). First, for a fixed $t\in(0,T]$,
        \begin{align*}
            \|I^+_2(t)\|_{L^2_{xy}}^2 &\eqsim \int_{-\infty}^{\infty} \sum_{n=1}^{\infty} \left|\left(\int_0^{\xi^2}+\int_{\xi^2}^{\infty}\right) n \cdot \dfrac{e^{-i\pi^2\lambda t}}{\xi^2+n^2-\lambda} \widetilde{h}(\xi,\lambda) \left(1-\psi\left(\xi^2+n^2-\lambda\right)\right) \, d\lambda\right|^2 \, d\xi \\
            &\le  S_1 + S_2
        \end{align*}
        where
        \begin{align}
            S_1 &= \int_{-\infty}^{\infty} \sum_{n=1}^{\infty} \left|\int_0^{\xi^2} n \cdot \dfrac{e^{-i\pi^2\lambda t}}{\xi^2+n^2-\lambda} \widetilde{h}(\xi,\lambda) \left(1-\psi\left(\xi^2+n^2-\lambda\right)\right) \, d\lambda\right|^2 \, d\xi \, , \label{3.19} \\ 
            S_2 &= \int_{-\infty}^{\infty} \sum_{n=1}^{\infty} \left|\int_{\xi^2}^{\infty} n \cdot \dfrac{e^{-i\pi^2\lambda t}}{\xi^2+n^2-\lambda} \widetilde{h}(\xi,\lambda) \left(1-\psi\left(\xi^2+n^2-\lambda\right)\right) \, d\lambda\right|^2 \, d\xi \, . \label{3.20} 
        \end{align}
        In (\ref{3.19}), substitute $\xi^2-\mu$ for $\lambda$, and use Holder's inequality to obtain
        \begin{align*}
            S_1 &= \int_{-\infty}^{\infty} \sum_{n=1}^{\infty} \left|\int_0^{\xi^2} n \cdot \dfrac{e^{-i\pi^2\left(\xi^2-\mu\right) t}}{n^2+\mu} \widetilde{h}\left(\xi,\xi^2-\mu\right) \left(1-\psi(n^2+\mu)\right) \, d\mu\right|^2 \, d\xi \\
            &\le \int_{-\infty}^{\infty} \sum_{n=1}^{\infty} \left(\int_0^{\xi^2} \left(1+\xi^2-\mu\right) \left|\widetilde{h}\left(\xi,\xi^2-\mu\right)\right|^2 \, d\mu\right) \cdot \left(\int_0^{\xi^2} \dfrac{n^2 \cdot \left(1-\psi(n^2+\mu)\right)}{(n^2+\mu)^2\left(1+\xi^2-\mu\right)} \, d\mu\right) \, d\xi \\
            &= \int_{-\infty}^{\infty} \left(\int_0^{\xi^2} \left(1+\xi^2-\mu\right) \left|\widetilde{h}(\xi,\xi^2-\mu)\right|^2 \, d\mu\right) \cdot \left(\sum_{n=1}^{\infty} \int_0^{\xi^2} \dfrac{n^2 \cdot \left(1-\psi(n^2+\mu)\right)}{(n^2+\mu)^2\left(1+\xi^2-\mu\right)} \, d\mu\right) \, d\xi \\
            &\le \int_{-\infty}^{\infty} \left(\int_0^{\xi^2} \left(1+\xi^2-\mu\right) \left|\widetilde{h}(\xi,\xi^2-\mu)\right|^2 \, d\mu\right) \cdot \left(\int_0^{\xi^2} \sum_{n=1}^{\infty} \dfrac{n^2}{(n^2+\mu)^2\left(1+\xi^2-\mu\right)} \, d\mu\right) \, d\xi\, .
        \end{align*}
        Let us first study the second integral factor. Since $({n^2+\mu})^{-1}$ is strictly decreasing in $n$, then $\sum_{n=1}^{\infty} ({n^2+\mu})^{-1} \le \int_0^{\infty} ({\eta^2+\mu})^{-1} {d\eta}$. Thus,
        \begin{align*}
            \int_0^{\xi^2} \sum_{n=1}^{\infty} & \dfrac{n^2}{(n^2+\mu)^2\left(1+\xi^2-\mu\right)} \, d\mu \le \int_0^{\xi^2} \sum_{n=1}^{\infty} \dfrac{1}{(n^2+\mu)\left(1+\xi^2-\mu\right)} \, d\mu \\
            &\le \int_0^{\xi^2} \dfrac{1}{1+\xi^2-\mu} \left(\int_0^{\infty} \dfrac{d\eta}{\eta^2+\mu} \right) \, d\mu \overset{\eta=t\sqrt{\mu}}{\le} \int_0^{\xi^2} \dfrac{1}{1+\xi^2-\mu} \left(\dfrac{1}{\sqrt{\mu}} \int_0^{\infty} \dfrac{1}{t^2+1} \, dt \right) \, d\mu \\
            &\lesssim \int_0^{\xi^2} \dfrac{1}{\sqrt{\mu} \left(1+\xi^2-\mu\right)} \, d\mu \overset{\sigma=\sqrt{\mu}}{\lesssim} \int_0^{|\xi|} \dfrac{1}{1+\xi^2-\sigma^2} \, d\sigma \\
            &\eqsim \dfrac{1}{\sqrt{1+\xi^2}} \int_0^{|\xi|} \left(\dfrac{1}{\sqrt{1+\xi^2}-\sigma}+\dfrac{1}{\sqrt{1+\xi^2}+\sigma}\right) \, d\sigma \\
            &\eqsim \dfrac{1}{\sqrt{1+\xi^2}} \left.\ln\left(\dfrac{\sqrt{1+\xi^2}+\sigma}{\sqrt{1+\xi^2}-\sigma} \right) \right|_0^{|\xi|} = \dfrac{1}{\sqrt{1+\xi^2}} \ln\left(\dfrac{\sqrt{1+\xi^2}+|\xi|}{\sqrt{1+\xi^2}-|\xi|} \right) \\
            &= \dfrac{2}{\sqrt{1+\xi^2}} \ln\left(\sqrt{1+\xi^2}+|\xi| \right) \lesssim \dfrac{1}{\sqrt{1+\xi^2}} \left(\sqrt{1+\xi^2}+|\xi| \right) \le 2\, ,
        \end{align*}
        which yields
        \begin{align*}
            S_1 &\lesssim \int_{-\infty}^{\infty} \int_0^{\xi^2} \left(1+\xi^2-\mu\right) \left|\widetilde{h}(\xi,\xi^2-\mu)\right|^2 \, d\mu \, d\xi \\
            &= \int_{-\infty}^{\infty} \int_0^{\xi^2} \left(1+\lambda\right) \left|\widetilde{h}(\xi,\lambda)\right|^2 \, d\lambda \, d\xi \lesssim \|h\|_{H_t^{\frac{1}{2}}\left([0,T]; \, L^2_x\right)}^2\, .
        \end{align*}
        Similarly, for (\ref{3.20}), we use $\nu^2+\xi^2$ to substitute for $\lambda$. As a result,
        \begin{align*}
            S_2 &= \int_{-\infty}^{\infty} \sum_{n=1}^{\infty} \left|\int_0^{\infty} \dfrac{2n \cdot e^{-i\pi^2(\nu^2+\xi^2) t}}{n^2-\nu^2} \cdot \nu \widetilde{h}\left(\xi,\nu^2+\xi^2\right) \left(1-\psi(n^2-\nu^2)\right) \, d\nu\right|^2 \, d\xi \\
            &\lesssim S_{2,1} + S_{2,2} + S_{2,3}
        \end{align*}
        where
        \begin{align}
            S_{2,1} &= \int_{-\infty}^{\infty} \sum_{n=1}^{\infty} \left|\int_0^{n-1} \dfrac{n \cdot e^{-i\pi^2(\nu^2+\xi^2) t}}{n^2-\nu^2} \cdot \nu \widetilde{h}\left(\xi,\nu^2+\xi^2\right) \left(1-\psi(n^2-\nu^2)\right) \, d\nu\right|^2 \, d\xi \, , \label{3.21} \\
            S_{2,2} &= \int_{-\infty}^{\infty} \sum_{n=1}^{\infty} \left|\int_{n-1}^{n+1} \dfrac{n \cdot e^{-i\pi^2(\nu^2+\xi^2) t}}{n^2-\nu^2} \cdot \nu \widetilde{h}\left(\xi,\nu^2+\xi^2\right) \left(1-\psi(n^2-\nu^2)\right) \, d\nu\right|^2 \, d\xi \, , \label{3.22} \\
            S_{2,3} &= \int_{-\infty}^{\infty} \sum_{n=1}^{\infty} \left|\int_{n+1}^{\infty} \dfrac{n \cdot e^{-i\pi^2(\nu^2+\xi^2) t}}{n^2-\nu^2} \cdot \nu \widetilde{h}\left(\xi,\nu^2+\xi^2\right) \left(1-\psi(n^2-\nu^2)\right) \, d\nu\right|^2 \, d\xi \, . \label{3.23}
        \end{align}
        For (\ref{3.21}), it is found that
        \begin{align*}
            S_{2,1} &= \int_{-\infty}^{\infty} \sum_{n=1}^{\infty} \left|\int_0^{n-1} \dfrac{2n \cdot e^{-i\pi^2(\nu^2+\xi^2) t}}{n^2-\nu^2} \cdot \nu \widetilde{h}\left(\xi,\nu^2+\xi^2\right) \left(1-\psi(n^2-\nu^2)\right) \, d\nu\right|^2 \, d\xi \\
            &\eqsim \int_{-\infty}^{\infty} \sum_{n=1}^{\infty} \left|\int_0^{n-1} \left[\dfrac{1}{n-\nu}+\dfrac{1}{n+\nu}\right] e^{-i\pi^2(\nu^2+\xi^2) t} \cdot \nu \widetilde{h}\left(\xi,\nu^2+\xi^2\right) \left(1-\psi(n^2-\nu^2)\right) \, d\nu\right|^2 \, d\xi \\
            &\lesssim \int_{-\infty}^{\infty} \sum_{n=1}^{\infty} \left|\int_0^{n-1} \dfrac{e^{-i\pi^2(\nu^2+\xi^2) t}}{n-\nu} \cdot \nu \widetilde{h}\left(\xi,\nu^2+\xi^2\right) \left(1-\psi(n^2-\nu^2)\right) \, d\nu\right|^2 \, d\xi \\
            &\lesssim \int_{-\infty}^{\infty} \sum_{n=1}^{\infty} \left(\int_0^{n-1} \left|\widetilde{h}\left(\xi,\nu^2+\xi^2\right)\right|^2 \dfrac{\nu^{2\alpha+2}}{(n-\nu)^{2-2\beta}} \, d\nu \right) \left(\int_0^{n-1} \dfrac{d\nu}{\nu^{2\alpha}(n-\nu)^{2\beta}} \right) \, d\xi\, .
        \end{align*}
        Consider the second integral factor in the summation and choose $\alpha$ and $\beta$ such that $0<\alpha$, $\beta<\frac{1}{2}$ and $1-2\alpha-2\beta<0$. Then,
        \begin{align*}
            \int_0^{n-1} & \dfrac{d\nu}{\nu^{2\alpha}(n-\nu)^{2\beta}} = \int_0^{n/2} \dfrac{d\nu}{\nu^{2\alpha}(n-\nu)^{2\beta}} + \int_{n/2}^{n-1} \dfrac{d\nu}{\nu^{2\alpha}(n-\nu)^{2\beta}} \\
            &\le \left(\dfrac{2}{n}\right)^{2\beta} \int_0^{n/2} \dfrac{d\nu}{\nu^{2\alpha}} + \left(\dfrac{2}{n}\right)^{2\alpha} \int_0^{n/2} \dfrac{d\nu}{(n-\nu)^{2\beta}} \\
            &= \left(\dfrac{2}{n}\right)^{2\beta} \left.\dfrac{\nu^{1-2\alpha}}{1-2\alpha}\right|_0^{n/2} + \left(\dfrac{2}{n}\right)^{2\alpha} \left.\dfrac{(n-\nu)^{1-2\beta}}{1-2\beta}\right|_{n/2}^{n-1} \\
            &= \left(\dfrac{n}{2}\right)^{1-2\alpha-2\beta} \left(\dfrac{1}{1-2\alpha}+\dfrac{1}{1-2\beta}\right) + \left(\dfrac{2}{n}\right)^{2\alpha} \dfrac{1}{2\beta-1} \le C\, .
        \end{align*}
        Hence, with $0<\alpha$, $\beta<\frac{1}{2}$ and $1-2\alpha-2\beta<0$,
        \begin{align*}
            S_{2,1} &\lesssim \int_{-\infty}^{\infty} \sum_{n=1}^{\infty} \int_0^{n-1} \left|\widetilde{h}\left(\xi,\nu^2+\xi^2\right)\right|^2 \dfrac{\nu^{2\alpha+2}}{(n-\nu)^{2-2\beta}} \, d\nu \, d\xi \\
            &= \int_{-\infty}^{\infty} \int_0^{\infty} \left|\widetilde{h}\left(\xi,\nu^2+\xi^2\right)\right|^2 \nu^{2\alpha+2} \left(\sum_{n=[\nu+2]}^{\infty} \dfrac{1}{(n-\nu)^{2-2\beta}}\right) \, d\nu \, d\xi \\
            &\lesssim \int_{-\infty}^{\infty} \int_0^{\infty} \left|\widetilde{h}\left(\xi,\nu^2+\xi^2\right)\right|^2 \nu^{2\alpha+2} \left(1 + \sum_{n=[\nu+3]}^{\infty} \dfrac{1}{(n-\nu)^{2-2\beta}}\right) \, d\nu \, d\xi \\
            &\lesssim \int_{-\infty}^{\infty} \int_0^{\infty} \left|\widetilde{h}\left(\xi,\nu^2+\xi^2\right)\right|^2 \nu^{2\alpha+2} \left(1 + \int_{\nu+1}^{\infty} \dfrac{d\eta}{(\eta-\nu)^{2-2\beta}}\right) \, d\nu \, d\xi \\
            &= \int_{-\infty}^{\infty} \int_0^{\infty} \left|\widetilde{h}\left(\xi,\nu^2+\xi^2\right)\right|^2 \nu^{2\alpha+2} \left(1 + \left.\dfrac{(\eta-\nu)^{2\beta-1}}{2\beta-1}\right|_{\nu+1}^{\infty}\right) \, d\nu \, d\xi \\
            &= \int_{-\infty}^{\infty} \int_0^{\infty} \left|\widetilde{h}\left(\xi,\nu^2+\xi^2\right)\right|^2 \nu^{2\alpha+2} \, d\nu \, d\xi \lesssim \int_{-\infty}^{\infty} \int_{\xi^2}^{\infty} \left(\lambda-\xi^2\right)^{\frac{1}{2}+\alpha} \left|\widetilde{h}(\xi,\lambda)\right|^2 \, d\lambda \, d\xi \\
            &\le \|h\|_{H_t^{\frac{1}{2}}\left([0,T]; \, L^2_x\right)}^2 \, .
        \end{align*}
        The symbol $[\cdot]$ represents the largest integer which is smaller or equal to the number inside. It is clear that the term in $S_{2,1}$ with $n=1$ is zero and the estimate of the term in $S_{2,2}$ with $n=1$ is given by the steps above. Thus, for (\ref{3.22}) we only need to consider the terms in $S_{2,2}$ with $n\ge2$. Also, note that $\nu \ge n-1$ implies ${\nu}^{-1} \le ({n-1})^{-1}$, and $1-\psi\neq0$ implies that $|n^2 - \nu^ 2 | \leq 1$ leading to  $\nu\le\sqrt{n^2-1}$ or $\nu\ge\sqrt{n^2+1}$. Then,
        \begin{align*}
            S_{2,2} &= \int_{-\infty}^{\infty} \sum_{n=2}^{\infty} \left|\int_{n-1}^{n+1} \dfrac{2n \cdot e^{-i\pi^2(\nu^2+\xi^2) t}}{n^2-\nu^2} \cdot \nu \widetilde{h}\left(\xi,\nu^2+\xi^2\right) \left(1-\psi(n^2-\nu^2)\right) \, d\nu\right|^2 \, d\xi \\
            &= \int_{-\infty}^{\infty} \sum_{n=2}^{\infty} \left|\int_{n-1}^{n+1} \left[\dfrac{1}{n-\nu}+\dfrac{1}{n+\nu}\right] e^{-i\pi^2(\nu^2+\xi^2) t} \cdot \nu \widetilde{h}\left(\xi,\nu^2+\xi^2\right) \left(1-\psi(n^2-\nu^2)\right) \, d\nu\right|^2 \, d\xi \\
            &\lesssim \int_{-\infty}^{\infty} \sum_{n=2}^{\infty} \left|\left(\int_{n-1}^{\sqrt{n^2-1}}+\int_{\sqrt{n^2+1}}^{n+1}\right) \dfrac{e^{-i\pi^2(\nu^2+\xi^2) t}}{n-\nu} \cdot \nu \widetilde{h}\left(\xi,\nu^2+\xi^2\right) \left(1-\psi(n^2-\nu^2)\right) \, d\nu\right|^2 \, d\xi \\
            &\lesssim \int_{-\infty}^{\infty} \sum_{n=2}^{\infty} \left(\int_{n-1}^{\sqrt{n^2-1}} \left|\widetilde{h}\left(\xi,\nu^2+\xi^2\right)\right|^2 \nu^3 \, d\nu \right) \left(\int_{n-1}^{\sqrt{n^2-1}} \dfrac{d\nu}{\nu(n-\nu)^2} \right) \, d\xi \\
            & \qquad + \int_{-\infty}^{\infty} \sum_{n=2}^{\infty} \left(\int_{\sqrt{n^2+1}}^{n+1} \left|\widetilde{h}\left(\xi,\nu^2+\xi^2\right)\right|^2 \nu^3 \, d\nu \right) \left(\int_{\sqrt{n^2+1}}^{n+1} \dfrac{d\nu}{\nu(n-\nu)^2} \right) \, d\xi \\
            &\le \int_{-\infty}^{\infty} \sum_{n=2}^{\infty} \left(\int_{n-1}^{\sqrt{n^2-1}} \left|\widetilde{h}\left(\xi,\nu^2+\xi^2\right)\right|^2 \nu^3 \, d\nu \right) \left.\dfrac{1}{(n-1)(n-\nu)} \right|_{n-1}^{\sqrt{n^2-1}} \, d\xi \\
            & \qquad + \int_{-\infty}^{\infty} \sum_{n=2}^{\infty} \left(\int_{\sqrt{n^2+1}}^{n+1} \left|\widetilde{h}\left(\xi,\nu^2+\xi^2\right)\right|^2 \nu^3 \, d\nu \right) \left.\dfrac{1}{(n-1)(n-\nu)} \right|_{\sqrt{n^2+1}}^{n+1} \, d\xi \\
            &= \int_{-\infty}^{\infty} \sum_{n=2}^{\infty} \left(\int_{n-1}^{\sqrt{n^2-1}} \left|\widetilde{h}\left(\xi,\nu^2+\xi^2\right)\right|^2 \nu^3 \, d\nu \right) \left(\dfrac{n+\sqrt{n^2-1}}{n-1}-1 \right) \, d\xi \\
            & \qquad + \int_{-\infty}^{\infty} \sum_{n=2}^{\infty} \left(\int_{\sqrt{n^2+1}}^{n+1} \left|\widetilde{h}\left(\xi,\nu^2+\xi^2\right)\right|^2 \nu^3 \, d\nu \right) \left(\dfrac{\sqrt{n^2+1}+n}{n-1}-1 \right) \, d\xi \\
            &\lesssim \int_{-\infty}^{\infty} \sum_{n=2}^{\infty} \left(\int_{n-1}^{\sqrt{n^2-1}}+\int_{\sqrt{n^2+1}}^{n+1}\right) \left|\widetilde{h}\left(\xi,\nu^2+\xi^2\right)\right|^2 \nu^3 \, d\nu \, d\xi \\
            &\lesssim \int_{-\infty}^{\infty} \int_0^{\infty} \left|\widetilde{h}\left(\xi,\nu^2+\xi^2\right)\right|^2 \nu^2 \, d\nu^2 \, d\xi \le \int_{-\infty}^{\infty} \int_{\xi^2}^{\infty} \left|\widetilde{h}(\xi,\lambda)\right|^2 (\lambda-\xi^2) \, d\lambda \, d\xi \le \|h\|_{H_t^{\frac{1}{2}}\left([0,T]; \, L^2_x\right)}^2\, .
        \end{align*}
        Finally, the only part left to show is the estimate for $S_{2,3}$  in (\ref{3.23}).
        \begin{align*}
            S_{2,3} &= \int_{-\infty}^{\infty} \sum_{n=1}^{\infty} \left|\int_{n+1}^{\infty} \dfrac{2n \cdot e^{-i\pi^2(\nu^2+\xi^2) t}}{n^2-\nu^2} \cdot \nu \widetilde{h}\left(\xi,\nu^2+\xi^2\right) \left(1-\psi(n^2-\nu^2)\right) \, d\nu\right|^2 \, d\xi \\
            &= \int_{-\infty}^{\infty} \sum_{n=1}^{\infty} \left|\int_{n+1}^{\infty} \left[\dfrac{1}{\nu-n}-\dfrac{1}{\nu+n}\right] e^{-i\pi^2(\nu^2+\xi^2) t} \cdot \nu \widetilde{h}\left(\xi,\nu^2+\xi^2\right) \left(1-\psi(n^2-\nu^2)\right) \, d\nu\right|^2 \, d\xi \\
            &\lesssim \int_{-\infty}^{\infty} \sum_{n=1}^{\infty} \left(\int_{n+1}^{\infty} \left|\widetilde{h}\left(\xi,\nu^2+\xi^2\right)\right|^2 \dfrac{\nu^{2\alpha+2}}{(\nu-n)^{2-2\beta}} \, d\nu \right) \left(\int_{n+1}^{\infty} \dfrac{d\nu}{\nu^{2\alpha}(\nu-n)^{2\beta}} \right) \, d\xi\, .
        \end{align*}
        Similarly, we need to work on the second factor inside the summation. Let $0<\alpha$, $\beta<\frac{1}{2}$ and $1-2\alpha-2\beta<0$. Also, $\nu>2n$ guarantees that $2(\nu-n)>\nu$. Thus,
        \begin{align*}
            \int_{n+1}^{\infty} &\dfrac{d\nu}{\nu^{2\alpha}(\nu-n)^{2\beta}} = \int_{n+1}^{2n} \dfrac{d\nu}{\nu^{2\alpha}(\nu-n)^{2\beta}} + \int_{2n}^{\infty} \dfrac{d\nu}{\nu^{2\alpha}(\nu-n)^{2\beta}} \\
            &\le \dfrac{1}{(2n)^{2\alpha}} \int_{n+1}^{2n} \dfrac{d\nu}{(\nu-n)^{2\beta}} + 2^{2\beta} \int_{2n}^{\infty} \dfrac{d\nu}{\nu^{2\alpha+2\beta}} \\
            &\le \dfrac{1}{n^{2\alpha}} \left.\dfrac{(\nu-n)^{1-2\beta}}{1-2\beta}\right|_{n+1}^{2n} + 2^{2\beta} \left.\dfrac{\nu^{1-2\alpha-2\beta}}{2\alpha+2\beta-1}\right|_{\infty}^{2n} \lesssim n^{1-2\alpha-2\beta} \le C\, .
        \end{align*}
        Therefore, (note that $\nu\ge2$),
        \begin{align*}
            S_{2,3} &\lesssim \int_{-\infty}^{\infty} \sum_{n=1}^{\infty} \int_{n+1}^{\infty} \left|\widetilde{h}\left(\xi,\nu^2+\xi^2\right)\right|^2 \dfrac{\nu^{2\alpha+2}}{(\nu-n)^{2-2\beta}} \, d\nu \, d\xi \\
            &= \int_{-\infty}^{\infty} \int_2^{\infty} \left|\widetilde{h}\left(\xi,\nu^2+\xi^2\right)\right|^2 \nu^{2\alpha+2} \left(\sum_{n=1}^{[\nu-1]} \dfrac{1}{(\nu-n)^{2-2\beta}}\right) \, d\nu \, d\xi \\
            &\le \int_{-\infty}^{\infty} \int_2^{\infty} \left|\widetilde{h}\left(\xi,\nu^2+\xi^2\right)\right|^2 \nu^{2\alpha+2} \left(1 + \sum_{n=1}^{[\nu-2]} \dfrac{1}{(\nu-n)^{2-2\beta}}\right) \, d\nu \, d\xi \\
            &\le \int_{-\infty}^{\infty} \int_2^{\infty} \left|\widetilde{h}\left(\xi,\nu^2+\xi^2\right)\right|^2 \nu^{2\alpha+2} \left(1 + \int_1^{\nu-1} \dfrac{d\eta}{(\nu-\eta)^{2-2\beta}}\right) \, d\nu \, d\xi \\
            &= \int_{-\infty}^{\infty} \int_2^{\infty} \left|\widetilde{h}\left(\xi,\nu^2+\xi^2\right)\right|^2 \nu^{2\alpha+2} \left(1 + \left.\dfrac{1}{(1-2\beta)(\nu-\eta)^{1-2\beta}}\right|_{\nu-1}^1 \right) \, d\nu \, d\xi \\
            &\eqsim \int_{-\infty}^{\infty} \int_2^{\infty} \left|\widetilde{h}\left(\xi,\nu^2+\xi^2\right)\right|^2 \nu^{2\alpha+1} \, d\nu^2 \, d\xi = \int_{-\infty}^{\infty} \int_0^{\infty} \left|\widetilde{h}(\xi,\lambda)\right|^2 \left(\lambda-\xi^2\right)^{\alpha+\frac{1}{2}} \, d\lambda \, d\xi \\
            &\le \int_{-\infty}^{\infty} \int_0^{\infty} \lambda \left|\widetilde{h}(\xi,\lambda)\right|^2 \, d\lambda \, d\xi \le \|h\|^2_{H_t^{\frac{1}{2}}\left([0,T]; \, L^2_x\right)} \, .
        \end{align*}
        By adding the estimates for $S_{2,1}$ to $S_{2,3}$, we obtain that $S_2 \lesssim \|h\|^2_{H_t^{\frac{1}{2}}\left([0,T]; \, L^2_x\right)}$.
        Hence,
        \begin{equation}\label{3.24}
            \sup_{0\le t \le T} \|I^+_2(t)\|_{L^2_{xy}} \lesssim \|h\|_{H^{\frac{1}{2}}_t \left([0,T]; \, L^2_x\right)}\, \, .
        \end{equation}

        To deal with the estimates of $I_2^+$ in terms of $L^4$-norm, we split $I^+_2$ as follows,
        \begin{align*}
            I^+_2 &=  I^+_{2,1} + I^+_{2,2} + I^+_{2,3}
        \end{align*}
        where
        \begin{align}
            I^+_{2,1} &= \int_{-\infty}^{\infty} \sum_n \left(e^{i\pi(x\xi+ny)} \cdot n \int_{\xi^2+n^2}^{\infty} \dfrac{e^{-i\pi^2\lambda t}}{\xi^2+n^2-\lambda} \widetilde{h}(\xi,\lambda) \left(1-\psi\left(\xi^2+n^2-\lambda\right)\right) \, d\lambda\right) \, d\xi\, , \label{3.25} \\
            I^+_{2,2} &= \int_{-\infty}^{\infty} \sum_n \left(e^{i\pi(x\xi+ny)} \cdot n \int_{\xi^2+\frac{n^2}{2}}^{\xi^2+n^2} \dfrac{e^{-i\pi^2\lambda t}}{\xi^2+n^2-\lambda} \widetilde{h}(\xi,\lambda) \left(1-\psi\left(\xi^2+n^2-\lambda\right)\right) \, d\lambda\right) \, d\xi \, , \label{3.26} \\
            I^+_{2,3} &= \int_{-\infty}^{\infty} \sum_n \left(e^{i\pi(x\xi+ny)} \cdot n \int_0^{\xi^2+\frac{n^2}{2}} \dfrac{e^{-i\pi^2\lambda t}}{\xi^2+n^2-\lambda} \widetilde{h}(\xi,\lambda) \left(1-\psi\left(\xi^2+n^2-\lambda\right)\right) \, d\lambda\right) \, d\xi\, . \label{3.27}
        \end{align}
        From (\ref{3.25}), we have
        \begin{align*}
            I^+_{2,1} = & \int_{-\infty}^{\infty} \sum_n \left(e^{i\pi(x\xi+ny)} \cdot n \int_{\xi^2+n^2}^{\infty} \dfrac{e^{-i\pi^2\lambda t}}{\xi^2+n^2-\lambda} \widetilde{h}(\xi,\lambda) \left(1-\psi\left(\xi^2+n^2-\lambda\right)\right) \, d\lambda\right) \, d\xi \\
            = & \int_{-\infty}^{\infty} \sum_n \bigg (e^{-i\pi^2\left(\xi^2+n^2\right)t+i\pi(x\xi+ny)}  \\
            & \qquad \qquad \times \int_{\xi^2+n^2}^{\infty} \dfrac{n e^{i\pi^2\left(\xi^2+n^2-\lambda\right) t}}{\left(\xi^2+n^2-\lambda\right)} \widetilde{h}(\xi,\lambda) \left(1-\psi\left(\xi^2+n^2-\lambda\right)\right) \, d\lambda\bigg ) \, d\xi\, .
        \end{align*}
        Then, choose $\mu=\xi^2+n^2-\lambda$ and $s=\sqrt{\lambda-\xi^2}$. By (\ref{3.5}), it is obtained that for $\frac{1}{2} < \sigma < 1$
        \begin{align*}
            \|I^+_{2,1}\|&_{L^4(\R \times [0,1] \times [0,T])}^2 \\
            &\lesssim \left\|\int_{\xi^2+n^2}^{\infty} \dfrac{n e^{i\pi^2\left(\xi^2+n^2-\lambda\right) t}}{\xi^2+n^2-\lambda} \widetilde{h}(\xi,\lambda) \left(1-\psi\left(\xi^2+n^2-\lambda\right)\right) \, d\lambda\right\|^2_{H_t^{\sigma}\left([0,T]; \, L^2_{\xi n}\right)} \\
            &= \left\|\int_{-\infty}^{\infty} \dfrac{n e^{i\pi^2\left(\xi^2+n^2-\lambda\right) t}}{\xi^2+n^2-\lambda} \chi_{\left[\xi^2+n^2,\infty\right)}(\lambda) \cdot \widetilde{h}(\xi,\lambda) \left(1-\psi\left(\xi^2+n^2-\lambda\right)\right) \, d\lambda\right\|^2_{H_t^{\sigma}\left([0,T]; \, L^2_{\xi n}\right)} \\
            &= \left\|\int_{-\infty}^{\infty} \dfrac{n e^{i\pi^2\mu t}}{\mu} \chi_{(-\infty, 0]}(\mu) \cdot \widetilde{h}\left(\xi,\xi^2+n^2-\mu\right) \left(1-\psi(\mu)\right) \, d\mu\right\|^2_{H_t^{\sigma}\left([0,T]; \, L^2_{\xi n}\right)}\, .
        \end{align*}
        Since it can be easily shown that the term only involving with the cut-off function $\psi$ is bounded by
        $\|h \|_{H_t^{1/2}([0,T]; \, L^2_x)}$ up to a constant based upon the study of (\ref{3.18}) for $I^+_1$, we may focus on the part without the cut-off function.
        \begin{align*}
            & \left\|\int_{-\infty}^{\infty} \dfrac{n e^{i\pi^2\mu t}}{\mu} \chi_{(-\infty, -1]}(\mu) \cdot \widetilde{h}\left(\xi,\xi^2+n^2-\mu\right) \, d\mu\right\|^2_{H_t^{\sigma}\left([0,T]; \, L^2_{\xi n}\right)} \\
            &\eqsim \int_{-\infty}^{\infty} \sum_n \int_{-\infty}^{\infty} \left|\dfrac{n (1+|\mu|)^{\sigma}}{|\mu|} \chi_{(-\infty, -1]}(\mu) \cdot \widetilde{h}\left(\xi,\xi^2+n^2-\mu\right)\right|^2 \, d\mu \, d\xi \\
            &\le \int_{-\infty}^{\infty} \sum_n \int_{-\infty}^{\infty} \left|\dfrac{n}{(1+|\mu|)^{1-\sigma}} \chi_{(-\infty, -1]}(\mu) \cdot \widetilde{h}\left(\xi,\xi^2+n^2-\mu\right)\right|^2 \, d\mu \, d\xi \\
            &= \int_{-\infty}^{\infty} \sum_n \int_{-\infty}^{\infty} \left|\dfrac{n}{(1+\left|\lambda-\xi^2-n^2\right|)^{1-\sigma}} \chi_{[\xi^2+n^2+1, \infty)}(\lambda) \cdot \widetilde{h}(\xi,\lambda)\right|^2 \, d\lambda \, d\xi \\
            &\eqsim \int_{-\infty}^{\infty} \sum_n \int_{-\infty}^{\infty} \left|\dfrac{n}{(1+s^2-n^2)^{1-\sigma}} \chi_{[\sqrt{n^2+1}, \infty)}(s) \cdot \widetilde{h}\left(\xi,s^2+\xi^2\right)\right|^2 s \, ds \, d\xi \\
            &= \int_{-\infty}^{\infty} \sum_n \left(\int_{\sqrt{n^2+1}}^{n+1} + \int_{n+1}^{\infty} \right) \dfrac{n^2}{(1+s^2-n^2)^{2-2\sigma}} \cdot \left|\widetilde{h}\left(\xi,s^2+\xi^2\right)\right|^2 s \, ds \, d\xi\, .
        \end{align*}
        The first integral can be estimated by
        \begin{align*}
            \int_{-\infty}^{\infty}& \sum_n \int_{\sqrt{n^2+1}}^{n+1} \dfrac{n^2}{(1+s^2-n^2)^{2-2\sigma}} \cdot \left|\widetilde{h}\left(\xi,s^2+\xi^2\right)\right|^2 s \, ds \, d\xi \\
            &\le \int_{-\infty}^{\infty} \sum_n \int_{\sqrt{n^2+1}}^{n+1} n^2 \cdot \left|\widetilde{h}\left(\xi,s^2+\xi^2\right)\right|^2 s \, ds \, d\xi \\
            &\le \int_{-\infty}^{\infty} \sum_n \int_{\sqrt{n^2+1}}^{n+1} s^3 \cdot \left|\widetilde{h}\left(\xi,s^2+\xi^2\right)\right|^2 \, ds \, d\xi \le \int_{-\infty}^{\infty} \int_1^{\infty} s^3 \cdot \left|\widetilde{h}\left(\xi,s^2+\xi^2\right)\right|^2 \, ds \, d\xi \\
            &\lesssim \int_{-\infty}^{\infty} \int_{1+\xi^2}^{\infty} \left(\lambda-\xi^2\right) \cdot \left|\widetilde{h}(\xi,\lambda)\right|^2 \, d\lambda \, d\xi \lesssim \|h\|_{H_t^{\frac{1}{2}}([0,T]; \, L^2_x)}^2\, ,
        \end{align*}
        and the second integral satisfies
        \begin{align*}
            \int_{-\infty}^{\infty} &\sum_n \int_{n+1}^{\infty} \dfrac{n^2}{(1+s^2-n^2)^{2-2\sigma}} \cdot \left|\widetilde{h}\left(\xi,s^2+\xi^2\right)\right|^2 s \, ds \, d\xi \\
            &\lesssim \int_{-\infty}^{\infty} \sum_n \int_{n+1}^{\infty} \left(\dfrac{n}{s^2-n^2} \right)^{2-2\sigma} n^{2\sigma} \cdot \left|\widetilde{h}\left(\xi,s^2+\xi^2\right)\right|^2 s \, ds \, d\xi \\
            &\lesssim \int_{-\infty}^{\infty} \sum_{n\neq0} \int_{n+1}^{\infty} \dfrac{1}{(s-n)^{2-2\sigma}} \cdot s^{2\sigma+1} \left|\widetilde{h}\left(\xi,s^2+\xi^2\right)\right|^2 \, ds \, d\xi \\
            &= \int_{-\infty}^{\infty} \int_2^{\infty} s^{2\sigma+1} \left|\widetilde{h}\left(\xi,s^2+\xi^2\right)\right|^2 \left(\sum_{n=1}^{\left\lfloor s-1 \right\rfloor} \dfrac{1}{(s-n)^{2-2\sigma}} \right) \, ds \, d\xi \\
            &\le \int_{-\infty}^{\infty} \int_2^{\infty} s^{2\sigma+1} \left|\widetilde{h}\left(\xi,s^2+\xi^2\right)\right|^2 \left(\int_0^{s-1} \dfrac{d\eta}{(s-\eta)^{2-2\sigma}} \right) \, ds \, d\xi \\
            &= \int_{-\infty}^{\infty} \int_2^{\infty} s^{2\sigma+1} \left|\widetilde{h}\left(\xi,s^2+\xi^2\right)\right|^2 \left(\left.\dfrac{(s-\eta)^{2\sigma-1}}{1-2\sigma} \right|_0^{s-1} \right) \, ds \, d\xi \\
            &\lesssim \int_{-\infty}^{\infty} \int_2^{\infty} s^{2\sigma+1} \left|\widetilde{h}\left(\xi,s^2+\xi^2\right)\right|^2 s^{2\sigma-1} \, ds \, d\xi \\
            &= \int_{-\infty}^{\infty} \int_2^{\infty} s^{4\sigma} \left|\widetilde{h}\left(\xi,s^2+\xi^2\right)\right|^2 \, ds \, d\xi \eqsim \int_{-\infty}^{\infty} \int_2^{\infty} s^{4\sigma-1} \left|\widetilde{h}\left(\xi,s^2+\xi^2\right)\right|^2 \, ds^2 \, d\xi \\
            &= \int_{-\infty}^{\infty} \int_{4+\xi^2}^{\infty} \left(\lambda-\xi^2\right)^{2\sigma-\frac{1}{2}} \left|\widetilde{h}(\xi,\lambda)\right|^2 \, d\lambda \, d\xi \, .
        \end{align*}
        With $\sigma=\frac{3}{4} \in \left(\frac{1}{2},1\right)$, it is derived that
        \begin{equation}\label{3.28}
            \|I^+_{2,1}\|_{L^4(\R \times [0,1] \times [0,T])} \lesssim \|h\|_{H_t^{\frac{1}{2}}([0,T]; \, L^2_x)}\, .
        \end{equation}
        Similarly, we may also control the $L^4$-norm of $I^+_{2,2}$ in (\ref{3.26}) by the boundary data in a suitable norm. Note that the terms in $I^+_{2,2}$ equal to nonzero numbers only when $n\ge2$. Let $s=\sqrt{\lambda-\xi^2}$. Then, as we did for $I^+_{2,1}$, (\ref{3.5}) yields
        \begin{align*}
            \|I^+_{2,2}\|&_{L^4(\R \times [0,1] \times [0,T])}^2 \\
            &\lesssim \left\|\int_{\xi^2+\frac{n^2}{2}}^{\xi^2+n^2} \dfrac{n e^{i\pi^2\left(\xi^2+n^2-\lambda\right) t}}{\xi^2+n^2-\lambda} \widetilde{h}(\xi,\lambda) \left(1-\psi\left(\xi^2+n^2-\lambda\right)\right) \, d\lambda\right\|^2_{H_t^{\sigma}\left([0,T]; \, L^2_{\xi n}\right)} \\
            &\lesssim \int_{-\infty}^{\infty} \sum_n \int_{-\infty}^{\infty} \left|\dfrac{n}{(1+\left|\xi^2+n^2-\lambda\right|)^{1-\sigma}} \chi_{\left[\xi^2+\frac{n^2}{2}, \xi^2+n^2-1\right)}(\lambda) \cdot \widetilde{h}(\xi,\lambda)\right|^2 \, d\lambda \, d\xi \\
            &\eqsim \int_{-\infty}^{\infty} \sum_n \int_{-\infty}^{\infty} \left|\dfrac{n}{(1+n^2-s^2)^{1-\sigma}} \chi_{\left[\frac{n}{\sqrt{2}},\sqrt{n^2-1}\right)}(s) \cdot \widetilde{h}\left(\xi,s^2+\xi^2\right)\right|^2 s \, ds \, d\xi \\
            &= \int_{-\infty}^{\infty} \sum_n \left(\int_{n-1}^{\sqrt{n^2-1}} + \int_{\frac{n}{\sqrt{2}}}^{n-1} \right) \dfrac{n^2}{(1+n^2-s^2)^{2-2\sigma}} \cdot \left|\widetilde{h}\left(\xi,s^2+\xi^2\right)\right|^2 s \, ds \, d\xi \, .
        \end{align*}
        Note that the integral with respect to $s$ over the interval $\left[\frac{n}{\sqrt{2}},n-1\right]$ exists only for $n\ge4$. First, it is seen that any two elements in the sequence $\left\{\left[n-1,\sqrt{n^2-1}\right]\right\}_{n\in\Z}$ are disjoint. Thus,
        \begin{align*}
            \int_{-\infty}^{\infty} &\sum_{n\ge2} \int_{n-1}^{\sqrt{n^2-1}} \dfrac{n^2}{(1+n^2-s^2)^{2-2\sigma}} \cdot \left|\widetilde{h}\left(\xi,s^2+\xi^2\right)\right|^2 s \, ds \, d\xi \\
            &\le \int_{-\infty}^{\infty} \sum_{n\ge2} \int_{n-1}^{\sqrt{n^2-1}} n^2 \cdot \left|\widetilde{h}\left(\xi,s^2+\xi^2\right)\right|^2 s \, ds \, d\xi \\
            &\le \int_{-\infty}^{\infty} \sum_{n\ge2} \int_{n-1}^{\sqrt{n^2-1}} s(1+s)^2 \cdot \left|\widetilde{h}\left(\xi,s^2+\xi^2\right)\right|^2 \, ds \, d\xi \\
            &\le \int_{-\infty}^{\infty} \int_1^{\infty} s(1+s)^2 \cdot \left|\widetilde{h}\left(\xi,s^2+\xi^2\right)\right|^2 \, ds \, d\xi \\
            &\lesssim \int_{-\infty}^{\infty} \int_{1+\xi^2}^{\infty} \left(1+\lambda-\xi^2\right) \cdot \left|\widetilde{h}(\xi,\lambda)\right|^2 \, d\lambda \, d\xi \le \|h\|_{H_t^{\frac{1}{2}}([0,T]; \, L^2_x)}^2 \, .
        \end{align*}
        For $n\ge4$,
        \begin{align*}
            \int_{-\infty}^{\infty} &\sum_{n\ge4} \int_{\frac{n}{\sqrt{2}}}^{n-1} \dfrac{n^2}{(1+n^2-s^2)^{2-2\sigma}} \cdot \left|\widetilde{h}\left(\xi,s^2+\xi^2\right)\right|^2 s \, ds \, d\xi \\
            &\le \int_{-\infty}^{\infty} \sum_{n\ge4} \int_{\frac{n}{\sqrt{2}}}^{n-1} \left(\dfrac{n}{n^2-s^2} \right)^{2-2\sigma} n^{2\sigma} \cdot \left|\widetilde{h}\left(\xi,s^2+\xi^2\right)\right|^2 s \, ds \, d\xi \\
            &\lesssim \int_{-\infty}^{\infty} \sum_{n\ge4} \int_{\frac{n}{\sqrt{2}}}^{n-1} \dfrac{s^{2\sigma+1}}{(n-s)^{2-2\sigma}} \cdot \left|\widetilde{h}\left(\xi,s^2+\xi^2\right)\right|^2 \, ds \, d\xi \\
            &= \int_{-\infty}^{\infty} \int_{2\sqrt{2}}^{\infty} s^{2\sigma+1} \left|\widetilde{h}\left(\xi,s^2+\xi^2\right)\right|^2 \left(\sum_{n=\left\lceil s+1 \right\rceil}^{\left\lfloor \sqrt{2}s \right\rfloor} \dfrac{1}{(n-s)^{2-2\sigma}} \right) \, ds \, d\xi \\
            &\le \int_{-\infty}^{\infty} \int_{2\sqrt{2}}^{\infty} s^{2\sigma+1} \left|\widetilde{h}\left(\xi,s^2+\xi^2\right)\right|^2 \left(\int_{s+1}^{\sqrt{2}s} \dfrac{d\eta}{(\eta-s)^{2-2\sigma}} \right) \, ds \, d\xi \\
            &= \int_{-\infty}^{\infty} \int_{2\sqrt{2}}^{\infty} s^{2\sigma+1} \left|\widetilde{h}\left(\xi,s^2+\xi^2\right)\right|^2 \left(\left.\dfrac{(\eta-s)^{2\sigma-1}}{2\sigma-1} \right|_{s+1}^{\sqrt{2}s} \right) \, ds \, d\xi \\
            &\lesssim \int_{-\infty}^{\infty} \int_{2\sqrt{2}}^{\infty} s^{4\sigma} \left|\widetilde{h}\left(\xi,s^2+\xi^2\right)\right|^2 \, ds \, d\xi \\
            &\eqsim \int_{-\infty}^{\infty} \int_{2\sqrt{2}}^{\infty} (s^2)^{2\sigma-\frac{1}{2}} \left|\widetilde{h}\left(\xi,s^2+\xi^2\right)\right|^2 \, ds^2 \, d\xi \\
            &= \int_{-\infty}^{\infty} \int_{2\sqrt{2}+\xi^2}^{\infty} \left(\lambda-\xi^2\right)^{2\sigma-\frac{1}{2}} \left|\widetilde{h}(\xi,\lambda)\right|^2 \, d\lambda \, d\xi \, .
        \end{align*}
        Again, by letting $\sigma=\frac{3}{4}$, we obtain
        \begin{equation}\label{3.29}
            \|I^+_{2,2}\|_{L^4(\R \times [0,1] \times [0,T])} \lesssim \|h\|_{H_t^{\frac{1}{2}}([0,T]; \, L^2_x)} \, .
        \end{equation}
        Lastly, we study $I^+_{2,3}$ in (\ref{3.27}).
        Write
        \begin{align*}
            I^+_{2,3} &\eqsim \int_{-\infty}^{\infty} \int_0^{\infty} \widetilde{h}(\xi,\lambda) e^{-i\pi^2\lambda t + i\pi x \xi} \sum_{n=1}^{\infty} \left[ n \sin(n\pi y) \cdot \chi_{\left[0, \xi^2+\frac{n^2}{2}\right]}(\lambda) \dfrac{1-\psi\left(\xi^2+n^2-\lambda\right)}{\xi^2+n^2-\lambda} \right] \, d\lambda \, d\xi \\
            &= K_1 + K_2
        \end{align*}
        where
        \begin{align}
            K_1 &= \int_{-\infty}^{\infty} \int_0^{\infty} \widetilde{h}(\xi,\lambda) e^{-i\pi^2\lambda t + i\pi x \xi} \left[\sum_{n=1}^{\infty} n \sin(n\pi y) \cdot \chi_{\left[\xi^2, \xi^2+\frac{n^2}{2}\right]}(\lambda) \dfrac{1-\psi\left(\xi^2+n^2-\lambda\right)}{\xi^2+n^2-\lambda} \right] \, d\lambda \, d\xi \, , \label{3.30} \\
            K_2 &= \int_{-\infty}^{\infty} \int_0^{\infty} \widetilde{h}(\xi,\lambda) e^{-i\pi^2\lambda t + i\pi x \xi} \chi_{\left[0,\xi^2\right]}(\lambda) \left[\sum_{n=1}^{\infty} n \sin(n\pi y) \cdot \dfrac{1-\psi\left(\xi^2+n^2-\lambda\right)}{\xi^2+n^2-\lambda} \right] \, d\lambda \, d\xi \, . \label{3.31}
        \end{align}
        Choose $\sigma \in \left(\frac{3}{4},1\right)$. By \emph{Proposition 3.6} in \cite{Sun_NLSE_1d}, we have the inequality
        \begin{equation*}
            \left|\sum_{n=1}^{\infty} n \sin(n\pi y) \cdot \chi_{\left[0, \frac{n^2}{2}\right]}(\mu) \dfrac{1-\psi(n^2-\mu)}{n^2-\mu}\right| \lesssim \dfrac{|y|^{\sigma-1}}{(1+\sqrt{\mu})^{1-\sigma}} \, , \qquad 0 \le \sigma \le 1 \, .
        \end{equation*}
        Replace $\mu$ by $\lambda-\xi^2$ to obtain
        \begin{equation}\label{3.32}
            \left|\sum_{n=1}^{\infty} n \sin(n\pi y) \cdot \chi_{\left[\xi^2,\xi^2+\frac{n^2}{2}\right]}(\lambda) \dfrac{1-\psi\left(\xi^2+n^2-\lambda\right)}{\xi^2+n^2-\lambda}\right| \lesssim \dfrac{|y|^{\sigma-1}}{\left(1+\sqrt{\lambda-\xi^2}\right)^{1-\sigma}} \, , \qquad 0 \le \sigma \le 1 \, .
        \end{equation}
        Recall the Hausdorff-Young inequality $\left\|\mathcal{F}[f]\right\|_{L^r} \lesssim \|f\|_{L^{r'}}$ for each $r\in[2, \infty]$ and $f\in L^{r'}$ where $r'=\frac{r}{r-1}$. If $r=4$, then for (\ref{3.30}), by (\ref{3.32}), we have
        \begin{align*}
            & \|K_1\|_{L^4_{xyt}}^4 \lesssim \int_0^1\left\|\widetilde{h}(\xi,\lambda) \cdot \left[\sum_{n=1}^{\infty} n \sin(n\pi y) \cdot \chi_{\left[\xi^2, \xi^2+\frac{n^2}{2}\right]}(\lambda) \dfrac{1-\psi\left(\xi^2+n^2-\lambda\right)}{\xi^2+n^2-\lambda} \right] \right\|_{L^{\frac{4}{3}}_{\xi \lambda}}^4 \, dy \\
            &\lesssim \int_0^1 \left\{\int_{-\infty}^{\infty} \int_{\xi^2}^{\infty} \left|\widetilde{h}(\xi,\lambda)\right|^{\frac{4}{3}} \cdot \left|\sum_{n=1}^{\infty} n \sin(n\pi y) \cdot \chi_{\left[\xi^2,\xi^2+\frac{n^2}{2}\right]}(\lambda) \dfrac{1-\psi\left(\xi^2+n^2-\lambda\right)}{\xi^2+n^2-\lambda}\right|^{\frac{4}{3}} \, d\lambda \, d\xi\right\}^3 \, dy \\
            &{\lesssim} \int_0^1 \left\{\int_{-\infty}^{\infty} \int_{\xi^2}^{\infty} \left|\widetilde{h}(\xi,\lambda)\right|^{\frac{4}{3}} \cdot \left(\dfrac{|y|^{\sigma-1}}{\left(1+\sqrt{\lambda-\xi^2}\right)^{1-\sigma}}\right)^{\frac{4}{3}} \, d\lambda \, d\xi\right\}^3 \, dy \\
            &\le \int_0^1 \left[\int_{-\infty}^{\infty} \int_{\xi^2}^{\infty} \left|\widetilde{h}(\xi,\lambda)\right|^{2} (1+\lambda) \, d\lambda \, d\xi \right]^2 \cdot \left[\int_{-\infty}^{\infty} \int_{\xi^2}^{\infty} (1+\lambda)^{-2} \dfrac{|y|^{4\sigma-4}}{\left(1+\sqrt{\lambda-\xi^2}\right)^{4-4\sigma}} \, d\lambda \, d\xi \right] \, dy \\
            &\lesssim \left[\int_{-\infty}^{\infty} \int_{\xi^2}^{\infty} \left|\widetilde{h}(\xi,\lambda)\right|^{2} (1+\lambda) \, d\lambda \, d\xi \right]^2 \cdot \left[\int_{-\infty}^{\infty} \int_{\xi^2}^{\infty} \dfrac{(1+\lambda)^{-2}}{\left(1+\sqrt{\lambda-\xi^2}\right)^{4-4\sigma}} \, d\lambda \, d\xi \right] \\
            &\lesssim \|h\|_{H^{\frac{1}{2}}_t([0,T]; \, L^2_x)}^4 \cdot \left[\int_{-\infty}^{\infty} \int_{\xi^2}^{\infty} \dfrac{1}{(1+\lambda-\xi^2)^{2-2\sigma}(1+\lambda)^2} \, d\lambda \, d\xi \right] \\
            &\le \|h\|_{H^{\frac{1}{2}}_t([0,T]; \, L^2_x)}^4 \cdot \bigg [\int_{-\infty}^{\infty} \int_{\xi^2}^{2\xi^2} \dfrac{1}{(1+\lambda-\xi^2)^{2-2\sigma}(1+\lambda)^2} \, d\lambda \, d\xi \\
            & \qquad \qquad + \int_{-\infty}^{\infty} \int_{2\xi^2}^{\infty} \dfrac{1}{(1+\lambda-\xi^2)^{2-2\sigma}(1+\lambda)^2} \, d\lambda \, d\xi\bigg ] \\
            &\le \|h\|_{H^{\frac{1}{2}}_t([0,T]; \, L^2_x)}^4 \cdot \left[\int_{-\infty}^{\infty} \int_{\xi^2}^{2\xi^2} \dfrac{1}{(1+\xi^2)^2} \, d\lambda \, d\xi + \int_{-\infty}^{\infty} \int_{2\xi^2}^{\infty} \dfrac{1}{(1+\xi^2)^{2-2\sigma}(1+\lambda)^2} \, d\lambda \, d\xi\right] \\
            &\le \|h\|_{H^{\frac{1}{2}}_t([0,T]; \, L^2_x)}^4 \cdot \left[\left.\tan^{-1}(\xi)\right|_{-\infty}^{\infty} + \int_{-\infty}^{\infty} \dfrac{1}{(1+\xi^2)^{2-2\sigma}} \int_{2\xi^2}^{\infty} \frac{1}{(1+\lambda)^2} \, d\lambda \, d\xi\right] \\
            &\lesssim \|h\|_{H^{\frac{1}{2}}_t([0,T]; \, L^2_x)}^4 \cdot \left[1 + \int_{-\infty}^{\infty} \dfrac{1}{(1+\xi^2)^{3-2\sigma}} \, d\xi\right] \lesssim \|h\|_{H^{\frac{1}{2}}_t([0,T]; \, L^2_x)}^4 \, .
        \end{align*}
        To study $K_2$, we use the following identity,
        $$
        \sum^\infty_{n =1} \frac{n \sin nx }{n^2 + a^2} = \frac\pi 2 \frac{\sinh a (\pi - x ) }{ \sinh a \pi },\qquad 0 < x < 2\pi \, .
        $$
        Consider $0<\lambda\le\xi^2$ and $\mu = \xi^2-\lambda > 0$.
        \begin{align*}
            & \sum_{n=1}^{\infty} \dfrac{n \sin(n\pi y) \left(1-\psi\left(\xi^2+n^2-\lambda\right)\right)}{\xi^2+n^2-\lambda} = \sum_{n=1}^{\infty} \dfrac{n \sin(n\pi y) \left(1-\psi(n^2+\mu)\right)}{n^2 + \mu} \\
            & = \sum_{n=1}^\infty \sin(n\pi y) \cdot \dfrac{n}{n^2 + \mu} - \sin(\pi y) \dfrac{\psi(1+\mu)}{1 + \mu} =\frac\pi 2 \frac{\sinh \sqrt{\mu} (1 - y )\pi }{ \sinh \sqrt{\mu} \pi }- \sin(\pi y) \dfrac{\psi(1+\mu)}{1 + \mu}\, .
        \end{align*}
        Therefore, the absolute value satisfies
        \begin{align}
            & \left|\sum_{n=1}^{\infty} \dfrac{n \sin(n\pi y) \left(1-\psi\left(\xi^2+n^2-\lambda\right)\right)}{\xi^2+n^2-\lambda}\right| \lesssim e^{ - \pi y \sqrt{\xi ^2 -\lambda } }  + \dfrac{\psi(1+\xi ^2 -\lambda)}{1 + \xi ^2 -\lambda} \, . \label{3.33}
        \end{align}
        With (\ref{3.31}) and (\ref{3.33}), it is deduced that for each fixed $\sigma > 0$,
        \begin{align*}
            &\|K_2\|_{L^4_{xyt}}^4 = \int_0^1 \|K_2\|_{L^4_{xt}}^4 \, dy \\
            &\lesssim \int_0^1 \left(\int_{-\infty}^{\infty} \int_0^{\xi^2} \left|\sum_{n=1}^{\infty} \dfrac{n \sin(n\pi y) \cdot \left(1-\psi\left(\xi^2+n^2-\lambda\right)\right)}{\xi^2+n^2-\lambda} \widetilde{h}(\xi,\lambda) \right|^{\frac{4}{3}} \, d\lambda \, d\xi \right)^{\frac{3}{4} \cdot 4} \, dy \\
            &\lesssim \int_0^1 \left(\int_{-\infty}^{\infty} \int_0^{\xi^2} \left|
            \left ( e^{ - \pi y \sqrt{\xi ^2 -\lambda } }  + \dfrac{\psi(1+\xi ^2 -\lambda)}{1 + \xi ^2 -\lambda}\right )
            \widetilde{h}(\xi,\lambda) \right|^{\frac{4}{3}} \, d\lambda \, d\xi \right)^3 \, dy \\
            &\lesssim \int_0^1 \left(\int_{-\infty}^{\infty} \left ( \int_0^{|\xi|^{3/2}}
            + \int_{|\xi|^{3/2}}^{\xi^2} \right )  \left|
            \left ( e^{ - \pi y \sqrt{\xi ^2 -\lambda } }  + \dfrac{\psi(1+\xi ^2 -\lambda)}{1 + \xi ^2 -\lambda}\right )
            \widetilde{h}(\xi,\lambda) \right|^{\frac{4}{3}} \, d\lambda \, d\xi \right)^3 \, dy \\
            &\lesssim \left(\int_{-\infty}^{\infty} \int_0^{|\xi|^{3/2}} \left|\widetilde{h}(\xi,\lambda)\right|^2 \left(1+\xi^2\right)^{\sigma} \, d\lambda \, d\xi \right)^2 \cdot \int_{-\infty}^{\infty} \int_0^{|\xi|^{3/2}} \dfrac{1}{\left(1+\xi^2\right)^{2\sigma} \left(1+\xi^2-\lambda\right)^{1/2}} \, d\lambda \, d\xi \\
            &\qquad + \left(\int_{-\infty}^{\infty} \int^{\xi^2}_{|\xi|^{3/2}} \left|\widetilde{h}(\xi,\lambda)\right|^2 \left(1+\lambda^2\right)^{\sigma} \, d\lambda \, d\xi \right)^2 \cdot \int_{-\infty}^{\infty} \int^{\xi^2}_{|\xi|^{3/2}}  \dfrac{1}{\left(1+\lambda^2\right)^{2\sigma} \left(1+\xi^2-\lambda\right)^{1/2}} \, d\lambda \, d\xi \\
            &\lesssim \|h\|_{L^2_t([0,T]; \, H^{\sigma}_x)}^4 \cdot \int_{-\infty}^{\infty} \dfrac{1}{\left(1+\xi^2\right)^{2\sigma}} \int_0^{|\xi|^{3/2}} \dfrac{1}{\left(1+\xi^2-\lambda\right)^{1/2}} \, d\lambda \, d\xi \\
            & \qquad +  \|h\|_{H^\sigma_t([0,T]; \, L^2_x)}^4 \cdot \int_{-\infty}^{\infty} \dfrac{1}{\left(1+|\xi |^3\right)^{2\sigma}} \int_{|\xi|^{3/2}}^{\xi^2} \dfrac{1}{\left(1+\xi^2-\lambda\right)^{1/2}} \, d\lambda \, d\xi \\
            &\lesssim \|h\|_{L^2_t([0,T]; \, H^{\sigma}_x)}^4 \cdot \int_{-\infty}^{\infty} \dfrac{1+|\xi |^{1/2}}{\left(1+\xi^2\right)^{2\sigma}} \, d\xi + \|h\|_{H^\sigma_t([0,T]; \, L^{2}_x)}^4 \cdot \int_{-\infty}^{\infty} \dfrac{1+|\xi |}{\left(1+|\xi|^3\right)^{2\sigma}} \, d\xi \\
            &\lesssim \|h\|_{L^2_t([0,T]; \, H^{\sigma}_x)}^4 + \|h\|_{H^\sigma_t([0,T]; \, L^{2}_x)}^4
        \end{align*}
        if $\sigma > 3/8$. Hence,
        \begin{equation}\label{3.34}
            \|I^+_{2,3}\|_{L^4(\R \times [0,1] \times [0,T])} \lesssim \|h\|_{H^{\frac{1}{2}}_t([0,T]; \, L^2_x) \bigcap L^2_t([0,T]; \, H^{\frac12}_x)} \, \, .
        \end{equation}
        Adding (\ref{3.24}), (\ref{3.28}), (\ref{3.29}) and (\ref{3.34}), the estimate of $I^+_{2}$ in the $L^4 \bigcap L^{\infty}(L^2)$-norm  is obtained:
        \begin{equation}\label{3.35}
            \|I^+_2\|_{L^4_{xyt} \left(\R \times [0,1] \times [0,T]\right) \bigcap L^{\infty}_t \left([0,T]; \, L^2_{xy}(\R \times [0,1])\right)} \lesssim \|h\|_{H^{\frac{1}{2}}_t([0,T]; \, L^2_x) \bigcap L^2_t([0,T]; \, H^{\frac{1}{2}}_x)}\, \, .
        \end{equation}

        Now, we can study $I^+_3$ in (\ref{3.15}). By \emph{Proposition~\ref{prop_3.1}}
        \begin{align*}
            \|I^+_3\|&_{L^4(\R \times [0,1] \times [0,T]) \, \bigcap \, L^{\infty}_t([0,T]; \, L^2_{xy}(\R \times [0,1]))}^2 \\
            &\lesssim \int_{-\infty}^{\infty} \sum_n \left|n \int_0^{\infty} \dfrac{\widetilde{h}(\xi,\lambda) \cdot \left(1-\psi\left(\xi^2+n^2-\lambda\right)\right)}{\xi^2+n^2-\lambda} \, d\lambda \right|^2 \, d\xi \le L_1+L_2
        \end{align*}
        where
        \begin{align}
            L_1 &= \int_{-\infty}^{\infty} \sum_n \left|n \int_0^{\xi^2} \dfrac{\widetilde{h}(\xi,\lambda) \cdot \left(1-\psi\left(\xi^2+n^2-\lambda\right)\right)}{\xi^2+n^2-\lambda} \, d\lambda \right|^2 \, d\xi \, , \label{3.36} \\
            L_2 &= \int_{-\infty}^{\infty} \sum_n \left|n \int_{\xi^2}^{\infty} \dfrac{\widetilde{h}(\xi,\lambda) \cdot \left(1-\psi\left(\xi^2+n^2-\lambda\right)\right)}{\xi^2+n^2-\lambda} \, d\lambda \right|^2 \, d\xi \, . \label{3.37}
        \end{align}
        For (\ref{3.36}), we let $\mu=\xi^2-\lambda$ so that
        \begin{align*}
            L_1 &= \int_{-\infty}^{\infty} \sum_n \left|n \int_0^{\xi^2} \dfrac{\widetilde{h}(\xi,\lambda) \cdot \left(1-\psi\left(\xi^2+n^2-\lambda\right)\right)}{\xi^2+n^2-\lambda} \, d\lambda \right|^2 \, d\xi \\
            &\eqsim \int_{-\infty}^{\infty} \sum_n \left|n \int_0^{\xi^2} \dfrac{\widetilde{h}\left(\xi,\xi^2-\mu\right) \cdot \left(1-\psi\left(n^2+\mu\right)\right)}{n^2+\mu} \, d\mu \right|^2 \, d\xi \\
            &\le \int_{-\infty}^{\infty} \sum_n \left[\int_0^{\xi^2} \left|\widetilde{h}\left(\xi,\xi^2-\mu\right)\right|^2 \, d\mu \cdot \int_0^{\xi^2} \dfrac{n^2}{\left(n^2+\mu\right)^2} \, d\mu \right] \, d\xi \\
            &\le \int_{-\infty}^{\infty} \left[\int_0^{\xi^2} \left|\widetilde{h}\left(\xi,\xi^2-\mu\right)\right|^2 \, d\mu \cdot \int_0^{\xi^2} \sum_n \dfrac{n^2}{\left(n^2+\mu\right)^2} \, d\mu \right] \, d\xi \\
            &\le \int_{-\infty}^{\infty} \left[\int_0^{\xi^2} \left|\widetilde{h}\left(\xi,\xi^2-\mu\right)\right|^2 \, d\mu \cdot \int_0^{\xi^2} \sum_n \dfrac{1}{n^2+\mu} \, d\mu \right] \, d\xi \\
            &\lesssim \int_{-\infty}^{\infty} \left[\int_0^{\xi^2} \left|\widetilde{h}\left(\xi,\xi^2-\mu\right)\right|^2  \, d\mu \cdot \int_0^{\xi^2} \int_0^{\infty} \dfrac{1}{\eta^2+\mu} \, d\eta \, d\mu \right] \, d\xi \\
            &= \int_{-\infty}^{\infty} \left[\int_0^{\xi^2} \left|\widetilde{h}\left(\xi,\xi^2-\mu\right)\right|^2  \, d\mu \cdot \int_0^{\xi^2} \dfrac{1}{\sqrt{\mu}} \int_0^{\infty} \dfrac{1}{\eta^2+1} \, d\eta \, d\mu \right] \, d\xi \\
            &= \int_{-\infty}^{\infty} \int_0^{\xi^2} |\xi| \left|\widetilde{h}\left(\xi,\xi^2-\mu\right)\right|^2  \, d\mu \, d\xi \eqsim \int_{-\infty}^{\infty} \int_0^{\xi^2} |\xi| \left|\widetilde{h}\left(\xi,\lambda\right)\right|^2 \, d\lambda \, d\xi \le \|h\|_{L^2_t \left([0,T]; \, \dot{H}^{\frac{1}{2}}_x\right)}^2 \, .
        \end{align*}
        Let $\nu=\sqrt{\lambda-\xi^2}$ so that we can apply Lemma A-1 in \cite{Sun_NLSE_1d} to (\ref{3.37}) and obtain
        \begin{align*}
            L_2 &= 2 \int_{-\infty}^{\infty} \sum_n \left|\int_0^{\infty} \dfrac{n \nu \widetilde{h}\left(\xi,\xi^2+\nu^2\right) \cdot \left(1-\psi\left(n^2-\nu^2\right)\right)}{n^2-\nu^2} \, d\nu \right|^2 \, d\xi \\
            &\le 2 \int_{-\infty}^{\infty} \sum_n \left|\int_0^{\infty} \dfrac{\nu \widetilde{h}\left(\xi,\xi^2+\nu^2\right) \cdot \left(1-\psi\left(n^2-\nu^2\right)\right)}{n-\nu} \, d\nu \right|^2 \, d\xi \\
            &\lesssim \int_{-\infty}^{\infty} \int_0^{\infty} (1+\nu) \nu^2 \left|\widetilde{h}\left(\xi,\xi^2+\nu^2\right) \right|^2 \, d\nu \, d\xi \\
            &\lesssim \int_{-\infty}^{\infty} \int_{\xi^2}^{\infty} \left(1+\sqrt{\lambda}\right) \sqrt{\lambda} \left|\widetilde{h}(\xi,\lambda) \right|^2 \, d\lambda \, d\xi \lesssim \|h\|_{H^{\frac{1}{2}}_t \left([0,T]; \, L^2_x\right)}^2 \, .
        \end{align*}
        Hence,
        \begin{equation}\label{3.38}
            \|I^+_3\|_{L^4_{xyt} \left(\R \times [0,1] \times [0,T]\right) \bigcap L^{\infty}_t \left([0,T]; \, L^2_{xy}(\R \times [0,1])\right)} \lesssim \|h\|_{H^{\frac{1}{2}}_t \left([0,T]; \, L^2_x\right) \bigcap L^2_t \left([0,T]; \, H^{\frac{1}{2}}_x \right)} \, .
        \end{equation}

        The next part to work on is the estimate for $I^-$. First, we look at $I^-_1$ in (\ref{3.16}).
        \begin{align*}
            I^-_1 &= \int_{-\infty}^{\infty} \sum_n \left(e^{i\pi(x\xi+ny)} \cdot n \int_0^{\infty} \dfrac{e^{i\pi^2\lambda t}}{\xi^2+n^2+\lambda} \widetilde{h}(\xi,-\lambda) \, d\lambda\right) \, d\xi \\
            &\eqsim \int_{-\infty}^{\infty} \int_0^{\infty} e^{i\pi x\xi + i\pi^2\lambda t} \widetilde{h}(\xi,-\lambda) \sum_{n=1}^{\infty} \dfrac{n \sin(n\pi y)}{\xi^2+n^2+\lambda} \, d\lambda \, d\xi
        \end{align*}
        From (\ref{3.33}), if we replace $-\lambda$ by $\lambda$, then
        \begin{equation}\label{3.39} 
            \left|\sum_{n=1}^{\infty} \dfrac{n \sin(n\pi y)}{\xi^2+n^2+\lambda}\right| \lesssim e^{-\pi y\sqrt{\xi^2 + \lambda}} \lesssim \dfrac{|y|^{\sigma-1}}{\left(1 + \sqrt{\xi^2+\lambda}\right)^{1-\sigma}}
        \end{equation}
        for any $\lambda>0$, $y \in [0, 1]$ and $\sigma\in\left[1/2,1\right]$.
        Thus
        \begin{align*}
            \|I^-_1\|&_{L^{\infty}_t([0,T]; \, L^2_{xy}(\R \times [0,1]))}^2 = \sup_{t\in[0,T]} \left\|I^-_1(t)\right\|_{L^2_{xy}}^2 \\
            &\lesssim \sup_{t\in[0,T]} \int_0^1 \int_{-\infty}^{\infty} \left|\int_0^{\infty} e^{i\pi^2\lambda t} \widetilde{h}(\xi,-\lambda) \sum_{n=1}^{\infty} \dfrac{n \sin(n\pi y)}{\xi^2+n^2+\lambda} \, d\lambda \right|^2 \, d\xi \, dy \\
            &\le \int_0^1 \int_{-\infty}^{\infty} \left(\int_0^{\infty} \left|\widetilde{h}(\xi,-\lambda)\right| \cdot \left|\sum_{n=1}^{\infty} \dfrac{n \sin(n\pi y)}{\xi^2+n^2+\lambda}\right| \, d\lambda \right)^2 \, d\xi \, dy \\
            &\lesssim \int_0^1 \int_{-\infty}^{\infty} \left(\int_0^{\infty} \left|\widetilde{h}(\xi,-\lambda)\right| \cdot \left|\dfrac{|y|^{\sigma-1}}{\left(1 + \sqrt{\xi^2+\lambda}\right)^{1-\sigma}} \right| \, d\lambda \right)^2 \, d\xi \, dy \\
            &\eqsim \int_{-\infty}^{\infty} \left(\int_0^{\infty} \left|\widetilde{h}(\xi,-\lambda)\right| \cdot \left|\dfrac{1}{\left(1 + \sqrt{\xi^2+\lambda}\right)^{1-\sigma}} \right| \, d\lambda \right)^2 \, d\xi \\
            &\le \int_{-\infty}^{\infty} \left(\int_0^{\infty} (1+|\lambda|) \left|\widetilde{h}(\xi,-\lambda)\right|^2 \, d\lambda \right) \cdot \left(\int_0^{\infty} \dfrac{1}{(1+|\lambda|)\left(1+\xi^2+\lambda\right)^{1-\sigma}} \, d\lambda \right) \, d\xi \\
            &\le \int_{-\infty}^{\infty} \left(\int_0^{\infty} (1+|\lambda|) \left|\widetilde{h}(\xi,-\lambda)\right|^2 \, d\lambda \right) \cdot \left(\int_0^{\infty} \dfrac{1}{\left(1+\lambda\right)^{2-\sigma}} \, d\lambda \right) \, d\xi \\
            &\lesssim \int_{-\infty}^{\infty} \left(\int_0^{\infty} (1+|\lambda|) \left|\widetilde{h}(\xi,-\lambda)\right|^2 \, d\lambda \right) \, d\xi \le \|h\|^2_{H^{\frac{1}{2}}_t \left([0,T]; \, L^2_x\right)}\, .
        \end{align*}
        To estimate the $L^4$-norm, we use the same technique for $K_2$ with the assistance from (\ref{3.39}). We only choose $\sigma\in\left(\frac{3}{4},1\right)$.
        \begin{align*}
            \|I^-_1\|&_{L^4(\R \times [0,1] \times [0,T])}^4 = \int_0^1 \|I^-_1\|_{L^4_{xt}}^4 \, dy \\
            &\lesssim \int_0^1 \left(\int_{-\infty}^{\infty} \int_0^{\infty} \left|\widetilde{h}(\xi,-\lambda) \right|^{\frac{4}{3}} \cdot \left|\sum_{n=1}^{\infty} \dfrac{n \sin(n\pi y)}{\xi^2+n^2+\lambda} \right|^{\frac{4}{3}} \, d\lambda \, d\xi \right)^3 \, dy \\
            &\lesssim \int_0^1 \left(\int_{-\infty}^{\infty} \int_0^{\infty} \left|\widetilde{h}(\xi,-\lambda) \right|^{\frac{4}{3}} \cdot \left|\dfrac{|y|^{\sigma-1}}{\left(1+\sqrt{\xi^2+\lambda}\right)^{1-\sigma}} \right|^{\frac{4}{3}} \, d\lambda \, d\xi \right)^3 \, dy \\
            &\lesssim \left(\int_{-\infty}^{\infty} \int_0^{\infty} \left|\widetilde{h}(\xi,-\lambda) \right|^{\frac{4}{3}} \cdot \left|\dfrac{1}{\left(1+\sqrt{\xi^2+\lambda}\right)^{1-\sigma}} \right|^{\frac{4}{3}} \, d\lambda \, d\xi \right)^3 \\
            &\le \int_{-\infty}^{\infty} \left(\int_0^{\infty} (1+|\lambda|)^{\frac{1}{2}} \left|\widetilde{h}(\xi,-\lambda) \right|^2 \, d\lambda \right)^2 \cdot \left(\int_0^{\infty} \dfrac{1}{(1+|\lambda|) \left(1+\xi^2+\lambda\right)^{2-2\sigma}} \, d\lambda \right) \, d\xi \\
            &\le \int_{-\infty}^{\infty} \left(\int_0^{\infty} (1+|\lambda|)^{\frac{1}{2}} \left|\widetilde{h}(\xi,-\lambda) \right|^2 \, d\lambda \right)^2 \cdot \left(\int_0^{\infty} \dfrac{1}{\left(1+\lambda\right)^{3-2\sigma}} \, d\lambda \right) \, d\xi \lesssim \|h\|_{H^{\frac{1}{4}}_t\left([0,T]; \, L^2_x\right)}^4\, ,
        \end{align*}
        which gives
        \begin{equation}\label{3.40}
            \|I^-_1\|_{L^4_{xyt} \left(\R \times [0,1] \times [0,T]\right) \bigcap L^{\infty}_t \left([0,T]; \, L^2_{xy}(\R \times [0,1])\right)} \lesssim \|h\|_{H^{\frac{1}{2}}_t \left([0,T]; \, L^2_x\right)} \, .
        \end{equation}
        To study $I^-_2$ in (\ref{3.17}), by \emph{Proposition~\ref{prop_3.1}}, we have
        \begin{align*}
            \|I^-_2\|&_{L^4(\R \times [0,1] \times [0,T]) \, \bigcap \, L^{\infty}_t([0,T]; \, L^2_{xy}(\R \times [0,1]))}^2 \\
            &\le \left\|\int_{-\infty}^{\infty} \sum_n \left(e^{-i\pi^2\left(\xi^2+n^2\right)t+i\pi(x\xi+ny)} \int_0^{\infty} \dfrac{n \widetilde{h}(\xi,-\lambda)}{\xi^2+n^2+\lambda} \, d\lambda\right) \, d\xi\right\|^2_{L^4 \, \bigcap \, L^{\infty}_t \left([0,T]; \, L^2_{xy}\right)} \\
            &\lesssim \int_{-\infty}^{\infty} \sum_n \left|\int_0^{\infty} \dfrac{n \widetilde{h}(\xi,-\lambda)}{\xi^2+n^2+\lambda} \, d\lambda \right|^2 \, d\xi \\
            &\le \int_{-\infty}^{\infty} \sum_n \left(\int_0^{\infty} (1+|\lambda|)^{2\gamma} \left|\widetilde{h}(\xi,-\lambda)\right|^2 \, d\lambda \right) \cdot \left(\int_0^{\infty} \dfrac{n^2}{(1+|\lambda|)^{2\gamma} (\xi^2+n^2+\lambda)^2} \, d\lambda\right) \, d\xi \\
            &\le \int_{-\infty}^{\infty} \sum_n \left(\int_0^{\infty} (1+|\lambda|)^{2\gamma} \left|\widetilde{h}(\xi,-\lambda)\right|^2 \, d\lambda \right) \cdot \left(\int_0^{\infty} \dfrac{n^2}{(1+|\lambda|)^{2\gamma} (n^2+\lambda)^2} \, d\lambda\right) \, d\xi \\
            &\le \|h\|_{H^{\gamma}_t([0,T]; \, L^2_x)} \cdot \left(\sum_n \int_0^{\infty} \dfrac{n^2}{(1+|\lambda|)^{2\gamma} (n^2+\lambda)^2} \, d\lambda \right)\, .
        \end{align*}
        Let $\beta>0$ and $\gamma>\frac{\beta}{2}+\frac{1}{4}>\frac{1}{4}$ and consider the second factor,
        \begin{align*}
            \sum_n \int_0^{\infty} &\dfrac{n^2}{(1+|\lambda|)^{2\gamma} (n^2+\lambda)^2} \, d\lambda \eqsim \int_0^{\infty} \sum_{n=1}^{\infty} \dfrac{n^2}{(1+|\lambda|)^{2\gamma} (n^2+\lambda)^2} \, d\lambda \\
            &\le \int_0^{\infty} \sum_{n=1}^{\infty} \dfrac{n^2}{(n^2+\lambda)^{\frac{3}{2}+\beta}} \cdot \dfrac{1}{(1+|\lambda|)^{2\gamma} (n^2+\lambda)^{\frac{1}{2}-\beta}} \, d\lambda \\
            &\le \int_0^{\infty} \sum_{n=1}^{\infty} \dfrac{1}{n^{1+2\beta}} \cdot \dfrac{1}{(1+|\lambda|)^{2\gamma+\frac{1}{2}-\beta}} \, d\lambda \le \sum_{n=1}^{\infty} \dfrac{1}{n^{1+2\beta}} \cdot \int_0^{\infty} \dfrac{1}{(1+|\lambda|)^{2\gamma+\frac{1}{2}-\beta}} \, d\lambda \le C\, .
        \end{align*}
        Therefore,
        \begin{equation}\label{3.41}
            \|I^-_2\|_{L^4_{xyt} \left(\R \times [0,1] \times [0,T]\right) \bigcap L^{\infty}_t \left([0,T]; \, L^2_{xy}(\R \times [0,1])\right)} \lesssim \|h\|_{H^{\gamma}_t([0,T]; \, L^2_x)} \quad \forall \, \gamma > 1/4 \, \, .
        \end{equation}
        Finally, combine (\ref{3.18}), (\ref{3.35}), (\ref{3.38}), (\ref{3.40}) and (\ref{3.41}) to attain the expected estimate (\ref{3.10}),
        \begin{equation*}
            \left\|W_b [h]\right\|_{L^4_{xyt} \left(\R \times [0,1] \times [0,T]\right) \bigcap L^{\infty}_t \left([0,T]; \, L^2_{xy}(\R \times [0,1])\right)} \lesssim \|h\|_{H^{\frac{1}{2}}_t \left([0,T]; \, L^2_x(\R) \right) \bigcap L^2_t \left([0,T]; \, H^{\frac{1}{2}}_x(\R) \right)}\, .
        \end{equation*}

        Now, take the derivatives into consideration. Let $\alpha=(\alpha_1,\alpha_2)$ be a nonnegative two-dimensional multi-index with $|\alpha|=\alpha_1+\alpha_2=s$ and $s$ an integer. We let $u$ solve (\ref{2.1}) with $\varphi = f = 0 $ and claim that
        \begin{equation}\label{3.42} 
            \left\|\partial^{2s}_y u\right\|_X \lesssim \sum_{j=1,2} \|(1+ |\lambda | +|\xi|)^{\frac{1}{2}} (1+ |\lambda |+|\xi|^2)^s \widehat{h_j}\|_{L^2_{\xi\lambda}} \, ,
        \end{equation}
        where $X = L^r \bigcap L^{\infty}_t \left(L^2_{xy}\right)$ for $r\in[2,4]$.
        To begin the proof of this claim, we observe by (\ref{2.1}) with $\varphi = f = 0$ that $\partial_t u = i\left(\partial_x^2 + \partial_y^2\right) u$. Then, for $s=1$, $2$ in \eqref{3.42},
        \begin{equation*}
            \left\|\partial_y^2 u\right\|_X \le \left\|\left(\partial_x^2 + \partial_y^2\right) u\right\|_X + \left\|\partial_x^2 u\right\|_X = \left\|\partial_t u\right\|_{X} + \left\|\partial_x^2 u\right\|_X
        \end{equation*}
        and
        \begin{align*}
            \left\|\partial^4_y u\right\|_X &= \left\|\partial_y^2 \left(\partial_y^2 u\right)\right\|_X \le \left\|\partial_t \left(\partial_y^2 u\right)\right\|_{X} + \left\|\partial_x^2 \left(\partial_y^2 u\right)\right\|_X = \left\|\partial_y^2 \left(\partial_t u\right)\right\|_{X} + \left\|\partial_y^2 \left(\partial_x^2 u\right)\right\|_X \\
            &\le \left\|\left(\partial_t^2 u\right)\right\|_X + 2 \left\|\partial_t \partial_x^2 u\right\|_X + \left\|\left(\partial_x^4 u\right)\right\|_X\, .
        \end{align*}
        The pattern of the estimate as $s$ increases can be found for $m\in\N$ by induction as
        \begin{equation*}
            \left\|\partial_y^{2m} u\right\|_X \le \sum_{k=0}^m \binom{m}{k} \left\|\partial_t^k \partial_x^{2(m-k)} u\right\|_X\, .
        \end{equation*}
        For $k\ge1$, let $v=\partial_t^k\partial_x^{2(m-k)} u$. Then, $v$ solves (\ref{2.1}) with $\varphi = f = 0 $ and the boundary data as follows (here, note that the higher order compatibility conditions are needed, and see Remark~\ref{rmk_2.7}):
        \begin{equation*}
            \left\{
            \begin{array}{l}
                i v_t+v_{xx}+v_{yy} = 0\, , \\[.08in]
                v(x,y,0) = (i)^k\left[(\partial_x^2+\partial_y^2)^k u\right](x,y,0) = 0\, , \\[.08in]
                v(x,0,t) = \partial_t^k\partial_x^{2(m-k)} h_1(x,t)\, , \qquad v(x,1,t) = \partial_t^k\partial_x^{2(m-k)} h_2(x,t)\, .
            \end{array}
            \right.
        \end{equation*}
        Thus, by (\ref{3.10}),
        \begin{equation*}
            \left \|\partial_t^k\partial_x^{2(m-k)} u \right \|_X = \|v\|_X \lesssim \sum_{j=1,2} \left \|\partial_t^k\partial_x^{2(m-k)} h_j\right  \|_{H^{\frac{1}{2}}_t([0,T]; L^2_x) \bigcap L^2_t\left([0,T]; H^{\frac{1}{2}}\right)}\, \, \, .
        \end{equation*}
        On the other hand, with the formula (\ref{2.8}), it is easy to verify that
        \begin{equation*}
            \left \|\partial_x^{2m} v\right \|_X \lesssim \sum_{j=1,2} \left \|\partial_x^{2m} h_j\right \|_{H^{\frac{1}{2}}_t([0,T]; L^2_x) \bigcap L^2_t\left([0,T]; H^{\frac{1}{2}}\right)}\, .
        \end{equation*}
        Therefore,
        \begin{align*}
            \left \|\partial^{2m}_y u\right \|_X &\lesssim \sum_{j=1,2} \sum_{k=0}^m \binom{m}{k} \left \|\partial_t^k\partial_x^{2(m-k)} h_j\right \|_{H^{\frac{1}{2}}_t([0,T]; L^2_x) \bigcap L^2_t\left([0,T]; H^{\frac{1}{2}}\right)} \\
            &\eqsim \sum_{j=1,2} \left \|(1 + |\lambda|+|\xi|)^{\frac{1}{2}} (1+ |\lambda|+|\xi|^2)^m \widehat{h_j}\right \|_{L^2_{\xi\lambda}}\, .
        \end{align*}
        Thus, (\ref{3.42}) follows.
        Since $\alpha_1$, $\alpha_2\ge0$ with $\alpha_1+\alpha_2=s$,
        \begin{align*}
            \left\|\partial^{\alpha_1}_x \partial^{\alpha_2}_y u\right\|_X &\lesssim \sum_{j=1,2} \left\|(1+|\lambda|+|\xi|)^{\frac{1}{2}} (1+ |\lambda|+|\xi|^2)^{\frac{\alpha_2}{2}} |\xi|^{\alpha_1} \widehat{h_j}\right\|_{L^2_{\xi\lambda}} \\
            &\lesssim \sum_{j=1,2} \left\|(1+|\lambda|+|\xi|                         )^{\frac{1}{2}} (1+|\lambda|+|\xi|^2)^{\frac{s}{2}} \widehat{h_j}\right\|_{L^2_{\xi\lambda}} = \sum_{j=1,2} \left\|{h_j}\right\|_{\HH^s(0, T)} \, .
        \end{align*}
        Hence, (\ref{3.11}) for $r=4$ is derived when $s$ is an integer. If $s\geq 0$ is not an integer, (\ref{3.11}) for $r=4$  follows by interpolation.

        Finally, we attempt to reach a more general conclusion in terms of function spaces. In fact, the special case of the estimate for $r=4$ leads us to the point that if $T< \infty$, then
        \begin{equation*}\label{NLSE_IB_2d_stp_loc_lin_wb_l2w2_est} 
            \left\|W_b[h_1,h_2]\right\|_{L^2_t \left([0,T]; \, H^{s,2}_{xy}(\R \times [0,1]) \right)} \lesssim T^{\frac{1}{2}} \sum_{j=1,2} \|h_j\|_{\HH^s(0, T)}\, .
        \end{equation*}
        By interpolation from $L^2$ to $L^4$, we can obtain (\ref{3.11}) and therefore the whole proof is complete.
    \end{proof}

    Also, it can be shown that the condition $h_j \in H^{\frac{1}{2}}_t \left([0,T]; \, L^2_x(\R) \right) \bigcap L^2_t \left([0,T]; \, H^\frac12_x(\R) \right)$ for $s=0$ is sharp with respect to the regularity on $t$.
    \begin{prop}
        \begin{equation*}
            \left\|W_{b_1}[h]\right\|_{L^2_{xyt}} \lesssim \|h\|_{H^{\sigma}_t\left([0,T]; \, L^2_x\right) \bigcap L^2_t\left([0,T]; \, H^1_x\right)} \, \text{ only when } \, \sigma \ge 1/2 \, .
        \end{equation*}
    \end{prop}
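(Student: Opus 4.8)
The statement is a sharpness (necessity) assertion, so the plan is contrapositive: assuming that
\[
\|W_{b_1}[h]\|_{L^2_{xyt}} \lesssim \|h\|_{H^{\sigma}_t([0,T];\,L^2_x)\,\cap\,L^2_t([0,T];\,H^1_x)}
\]
holds with a constant independent of $h$, I would construct for each large integer $N$ a boundary datum $h_N$ whose right-hand side is of size $N^{2\sigma}$ while $\|W_{b_1}[h_N]\|_{L^2_{xyt}}\gtrsim N$; letting $N\to\infty$ then forces $1-2\sigma\le 0$, i.e. $\sigma\ge 1/2$.

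The test datum I would use is of ``modulated profile'' type. Fix $\theta\in C_c^\infty((T/4,3T/4))$ with $\theta\ge 0$, $\theta\not\equiv 0$, and a bump $\phi\in C_c^\infty(\{|\xi|\le\delta\})$ with $\phi\ge 0$, $\phi\not\equiv 0$, and set $\widehat{h_N^x}(\xi,\tau)=\phi(\xi)\,e^{-i\pi^2 N^2\tau}\theta(\tau)$. Taking the space--time Fourier transform one gets $\widehat{h_N}(\xi,\lambda)\eqsim\phi(\xi)\,\Theta(-\lambda-N^2)$, where $\Theta(\mu):=\int e^{i\pi^2\mu\tau}\theta(\tau)\,d\tau$ is a Schwartz function with $\Theta(0)=\int\theta>0$; hence $\widehat{h_N}$ is supported in $\{|\xi|\le\delta\}$ and concentrated (with rapid tails) at $|\lambda|\approx N^2$. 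A one-line computation then gives $\|h_N\|_{H^\sigma_t L^2_x}\sim N^{2\sigma}$, while $\|h_N\|_{L^2_t H^1_x}\sim 1$ because $\phi$ is a fixed bump near the origin; so the right-hand side of the inequality is $\sim N^{2\sigma}$ for $N$ large.

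For the left-hand side I would insert $h_N$ into the series representation of $W_{b_1}$ in (\ref{2.8}) (equivalently its $\sin(n\pi y)$ form) and evaluate the $\tau$-integral. For $t$ past the support of $\theta$ — say $t\in(3T/4,T)$, where the truncation $\int_0^t$ captures all of $\theta$ — one obtains
\[
W_{b_1}[h_N](x,y,t)\eqsim \int_{-\infty}^{\infty} e^{i\pi\xi x}\,e^{-i\pi^2\xi^2 t}\,\phi(\xi)\sum_{n=1}^{\infty} n\sin(n\pi y)\,e^{-i\pi^2 n^2 t}\,\Theta\!\left(\xi^2+n^2-N^2\right)d\xi .
\]
Since $\Theta$ decays faster than any power and $|n^2-N^2|\ge\max(N,|n-N|)$ for $n\ge 1$, $n\ne N$, the $n=N$ term dominates: on $\supp\phi$ (shrink $\delta$ so that $|\Theta(\xi^2)|\ge\Theta(0)/2$ there) it has modulus $\gtrsim N|\phi(\xi)|$ and, being a single mode $\sin(N\pi y)$, is orthogonal in $y$ to the remaining terms, whose $y$-supremum is bounded by $\sum_{n\ne N}n\,|\Theta(\xi^2+n^2-N^2)|=o(N)$ (indeed $O(N^{-K})$ for every $K$). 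Applying Plancherel in $x$, orthogonality of $\{\sin(n\pi y)\}_{n\ge1}$ on $[0,1]$, and integrating over $t\in(3T/4,T)$, I expect $\|W_{b_1}[h_N]\|_{L^2_{xyt}}^2\gtrsim N^2\|\phi\|_{L^2}^2$, i.e. $\|W_{b_1}[h_N]\|_{L^2_{xyt}}\gtrsim N$. Combining with the previous paragraph, the hypothesized inequality would give $N\lesssim N^{2\sigma}$ for all large $N$, which is impossible unless $\sigma\ge 1/2$.

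I expect the main obstacle to be making the ``$n=N$ dominates'' step rigorous, i.e. showing that the non-resonant part of the $n$-sum cannot conspire to cancel the leading term; this is precisely where the Schwartz decay of $\Theta$ together with the elementary bound $|n^2-N^2|\ge\max(N,|n-N|)$ (valid for $n\ge1$, $n\ne N$) does the work. A secondary, harmless point is that the time truncation built into $W_{b_1}$ forces the clean lower bound to be taken only over $t\in(3T/4,T)$; since $T>0$ is fixed this interval has a definite positive length, so no loss results.
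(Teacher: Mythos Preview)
Your argument is correct and takes a genuinely different route from the paper's. The paper constructs a \emph{single} boundary datum $h$ by superposing infinitely many near-resonant modes (with amplitudes $|n|^{-\beta}$ at time frequencies $n^2+1$), chooses $\beta\in(2\sigma+\tfrac12,\tfrac32]$, and then shows directly that $\|W_{b_1}[h]\|_{L^2_{xyt}}^2\gtrsim\sum_n|n|^{2-2\beta}=\infty$ while $h\in H^\sigma_t(L^2_x)\cap L^2_t(H^1_x)$; this is a one-shot contradiction. You instead use a \emph{family} $h_N$ concentrated at a single resonant mode $n=N$ and compare growth rates, which is arguably more elementary and gives the same conclusion by letting $N\to\infty$. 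What the paper's approach buys is that it exhibits an explicit datum in the claimed boundary space on which $W_{b_1}$ blows up; what yours buys is a cleaner computation with no series manipulations.

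One remark: the ``main obstacle'' you flag is in fact not an obstacle at all. Once you restrict to $t\in(3T/4,T)$, Plancherel in $x$ and orthogonality of $\{\sin(n\pi y)\}_{n\ge1}$ on $[0,1]$ give
\[
\|W_{b_1}[h_N](\cdot,\cdot,t)\|_{L^2_{xy}}^2 \eqsim \int_{\R}|\phi(\xi)|^2\sum_{n\ge1}n^2\,|\Theta(\xi^2+n^2-N^2)|^2\,d\xi,
\]
so the $n\ne N$ terms contribute \emph{nonnegatively} and can simply be discarded; no ``domination'' or cancellation estimate is needed. Your Schwartz-tail bound on $\sum_{n\ne N}n|\Theta(\xi^2+n^2-N^2)|$ is therefore superfluous (though correct).
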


    \begin{proof}
        We assume that $\left\|W_{b_1}[h]\right\|_{L^2_{xyt}} \lesssim \|h\|_{H^{\sigma}_t\left([0,T]; \, L^2_x\right) \bigcap L^2_t\left([0,T]; \, H^1_x\right)}$ where  $$h \in H^{\sigma}_t\left([0,T]; \, L^2_x\right) \bigcap L^2_t\left([0,T]; \, H^1_x\right)$$ with $0\le\sigma<1/2$. From (\ref{2.8}), we have the formula for $W_{b_1}[h]$,
        \begin{equation*}
            W_{b_1}[h](x,y,t) = \int_{-\infty}^{\infty} \sum_n \left[n \int_0^t e^{-i\pi^2\left(\xi^2+n^2\right)(t-\tau) + i\pi(\xi x + n y)} \widehat{h^x}(\xi, \tau) \, d\tau\right] \, d\xi\, .
        \end{equation*}
        Let $g=g(x,t)$ be such that $$\displaystyle \widehat{g^x}(\xi,t) = \widehat{h^x}(\xi,t) e^{i\pi^2\xi^2t} = \sum_{k\in \, \Z} e^{-i\pi^2kt} a_k(\xi) \, ,$$ assuming that $a_{n^2}(\xi) \equiv 0$ $\forall \, n\in\Z$ and $\supp \widehat{h^x} \subset [0,1]\times[0,\frac{2}{\pi}]$. Define
        \begin{equation*}
            a_k(\xi) = \left\{
            \begin{array}{ll}
                \widehat{g_1}(\xi)/|n|^{\beta} &, \, k=n^2+1\, , \\
                0 &, \, \text{otherwise}\, ,
            \end{array}
            \right.
        \end{equation*}
        so that $\displaystyle \widehat{h^x}(\xi,t) e^{i\pi^2\xi^2t} = \sum_{n\neq0} \widehat{g_1}(\xi)/|n|^{\beta} e^{-i\pi^2(n^2+1)t}$.

        Here, it is possible to choose $g_1 \in H^1(\R)$. Define the Fourier series of $f$ as
        \begin{equation*}
            f(t) = \sum_{k\in \, \Z} e^{-i \pi^2 k t} \alpha_k \, \text{, where } \, \alpha_k = \int_0^{\frac{2}{\pi}} e^{i \pi^2 k t} f(t) \, dt\, .
        \end{equation*}
        Hence,
        $$
        W_{b_1}[h](x,y,t) = \int_{-\infty}^{\infty} e^{-i\pi^2 \xi^2 t+i\pi x\xi} \left(\sum_n n \cdot e^{-i\pi^2 n^2 t+i\pi y n} \int_0^t e^{i\pi^2n^2\tau}\widehat{g^x}(\xi,\tau) \, d\tau \right) \, d\xi \, .
        $$
        If $$\mathcal{J} := \sum_n n \cdot e^{-i\pi^2 n^2 t+i\pi y n} \int_0^t e^{i\pi^2n^2\tau}\widehat{g^x}(\xi,\tau) \, d\tau \, ,$$
        it is deduced that
        \begin{align*}
            \mathcal{J} &= \sum_n n \cdot e^{-i\pi^2 n^2 t+i\pi y n} \int_0^t e^{i\pi^2n^2\tau} \sum_{k \neq n^2} e^{-i\pi^2kt} a_k(\xi) \, d\tau \\
            &= \sum_n n \cdot e^{-i\pi^2 n^2 t+i\pi y n} \left(\sum_{k \neq n^2} \dfrac{e^{i\pi^2 \left(n^2-k\right) t} - 1}{n^2-k} a_k(\xi) \right) \\
            &= \sum_n n \cdot e^{i\pi y n} \left[\sum_{k \neq n^2} e^{-i\pi^2 k t} \dfrac{a_k(\xi)}{n^2-k} + e^{-i\pi^2 n^2 t} \left(\sum_{k \neq n^2} \dfrac{a_k(\xi)}{k-n^2}\right) \right]\, .
        \end{align*}
        Let
        \begin{equation*}
            b_{nk}(\xi) = \left\{
            \begin{array}{ll}
                \dfrac{a_k(\xi)}{n^2-k} &, \, k \neq n^2 \,,\\
                \displaystyle \sum_{k \neq n^2} \dfrac{a_k(\xi)}{k-n^2} &, \, k=n^2\, .
            \end{array}
            \right.
        \end{equation*}
        Then $$\displaystyle \mathcal{J} = \sum_{n,k} n e^{i\pi y n} e^{-i\pi^2 k t} b_{nk}(\xi) \, $$ and $$W_{b_1}h(x,y,t) = \int_{-\infty}^{\infty} \sum_{n,k} e^{-i\pi^2 \xi^2 t+i\pi x\xi} n e^{i\pi y n} e^{-i\pi^2 k t} b_{nk}(\xi) \, d\xi \, .$$ Therefore,
        \begin{align*}
            \left\|W_{b_1}h\right\|&_{L^2_{xyt}}^2= \int_{-\infty}^{\infty} \sum_{n,k} n^2 |b_{nk}(\xi)|^2 \, d\xi \ge \int_{-\infty}^{\infty} \sum_{n} n^2 |b_{n,n^2+1}(\xi)|^2 \, d\xi = \int_{-\infty}^{\infty} \sum_{n} n^2 |a_{n^2+1}(\xi)|^2 \, d\xi \\
            &\gtrsim \sum_n \dfrac{n^2}{|n|^{2\beta}} \gtrsim \sum_n \dfrac{1}{|n|^{2\beta-2}}\, .
        \end{align*}
        For $0\le\sigma<1/2$, if $h\in H^{\sigma}_t\left([0,\frac{2}{\pi}]; L^2_x\right)$, the product rule for fractional derivatives (\emph{Proposition 3.3} in \cite{Weinstein_KdV_DP_SA}) implies
        \begin{align*}
            & \left\|D^{\sigma}_t\left(\widehat{h^x}(\xi,t) \right)\right\|_{L^2_{\xi t}}= \left\|D^{\sigma}_t\left(\widehat{g^x}(\xi,t) e^{- i\pi^2\xi^2t}\right)\right\|_{L^2_{\xi t}} \\
            &\le \left\| ( 1 + |\xi|^2) ^\sigma D^{\sigma}_t \widehat{g^x}(\xi,t) \right\|_{L^2_{\xi t}} \left\| ( 1 + |\xi|^2) ^{-\sigma} e^{i\pi^2\xi^2t}\right\|_{L^{\infty}_{\xi t}} + \left\| ( 1 + |\xi|^2) ^\sigma \widehat{g^x}(\xi,t) \right\|_{L^2_{\xi t}} \left\| e^{i\pi^2\xi^2t}\right\|_{L^{\infty}_{\xi t}} < \infty\, ,
        \end{align*}
        which requires that $$\left\|( 1 + |\xi|^2) ^\sigma \sum_{n\neq0} (n^2+1)^{\sigma} \frac{\widehat{g_1}(\xi)}{|n|^{\beta}} e^{-i\pi^2(n^2+1)t} \right\|_{L^2_{\xi t}} < \infty \, .$$
        Note that $g_1 \in H^1_x(\R)$ and $h \in L^2_t\left(\left(0,\frac{2}{\pi}\right); \, H^1_x(\R)\right)$. Owing to $\sigma<1/2$, it is possible to pick $\beta$ in $\left(2\sigma+\frac{1}{2}, \, \frac{3}{2}\right]$ such that $2\beta-2 <1$ and
        \begin{equation}\label{3.43} 
            \|W_{b_1}[h] \|^2_{L^2_{xyt}} \gtrsim \sum_n \dfrac{1}{|n|^{2\beta-2}} = \infty\, .
        \end{equation}
        By the assumption at the beginning of the proof, we have
        \begin{equation*}
            \|W_{b_1}[h]\|_{L^2_{xyt}} \lesssim \|h\|_{H^{\sigma}_t\left(\left(0,\frac{2}{\pi}\right); \, L^2_x\right)} + \|h\|_{L^2_t\left(\left(0,\frac{2}{\pi}\right); \, H^1_x\right)} < \infty\, ,
        \end{equation*}
        which gives a contradiction to (\ref{3.43}). Hence, $\sigma \ge 1/2$ is required. Or, the regularity requirement on $t$ for the boundary data in \emph{Proposition~\ref{prop_3.4}} is sharp.
    \end{proof}

\section{Local Well-posedness}\label{sec_4}

    Now, we are ready for the study of local well-posedness of (\ref{2.1}) with estimates established in the previous section. Recall the mild solution of (\ref{2.1}) given by (\ref{2.5}) and define an operator $\mathcal{A}[u](x,y,t)$:
    \begin{equation} \label{4.1} 
        u(x,y,t) = W_b[h_1,h_2] (x,y,t) + W_0(t)\phi (x,y) + \, i \left(\int_0^t W_0(t-\tau) f(\tau) \, d\tau \right) (x,y) = \mathcal{A}[u](x,y,t) \, ,
    \end{equation}
    where $f(u) = \lambda |u|^{p-2} u$ for $p \ge 3$ and $(x,y,t) \in \R \times [0,1] \times (0,T)$. We will show that there is a unique solution in $H^s(\R\times[0,1])$ with a maximal existence interval  and the solution continuously depends upon the initial and boundary data by proving that the operator $\mathcal{A}[u](x,y,t)$ is a contraction. Let $r\in[2,4]$ and define the following function spaces,
    \begin{equation*}
        \X^s_T := C_t \left([0,T]; \, H^s_{xy}(\R \times [0,1])\right) \bigcap L^r_t \left([0,T]; \, W^{s,r}_{xy}(\R \times [0,1])\right)
    \end{equation*}
    with $\displaystyle \|u\|_{\X^s_T} = \sup_{t\in[0,T]} \|u(t)\|_{H^s_{xy}(\R \times [0,1])} + \|u\|_{L^r_t\left([0,T]; \, W^{s,r}_{xy}(\R \times [0,1])\right)}$.
    Let
    \begin{equation*}
        \Y^s_T := C_t \left([0,T]; \, H^s_{xy}(\R \times [0,1])\right)
    \end{equation*}
    with $\displaystyle \|u\|_{\Y^s_T} = \sup_{t\in[0,T]} \|u(t)\|_{H^s_{xy}(\R \times [0,1])}$. For some $M>0$, define closed balls in $\X_T$ and $\Y_T$ of radius $M$ as
    \begin{equation*}
        B_M^{\X^s} := \left\{u: \, \|u\|_{\X^s_T} \le M\right\} \quad \text{and } \quad B_M^{\Y^s} := \left\{u: \, \|u\|_{\Y^s_T} \le M\right\}\, .
    \end{equation*}

    For the investigation on the existence of solution of (\ref{4.1}), we need the following two lemmas on differentiation of fractional order:
    \begin{lemma}[The chain rule for fractional derivatives] (see \emph{Proposition 3.1} in \cite{Weinstein_KdV_DP_SA}) 
        Let $0<s<1$ and $\alpha$ be a nonnegative multi-index with $|\alpha|=s$. Let $u: \R^2 \to \C$ and $f\in C^1(\C)$. Then
        \begin{equation}\label{4.2}
            \|D^{\alpha} f(u)\|_{L^{r_2}} \lesssim \|f'(u)\|_{L^{r_1}} \|D^{\alpha} u\|_{L^r}
        \end{equation}
        for $\frac{1}{r_2} = \frac{1}{r_1} + \frac{1}{r}$ with $1<r$, $r_1$, $r_2<\infty$. In particular, if $|f'(u)|$ is uniformly bounded, then
        \begin{equation}\label{4.3}
            \|D^{\alpha} f(u)\|_{L^r} \lesssim \|f'(u)\|_{L^{\infty}} \|D^{\alpha} u\|_{L^r}\, .
        \end{equation}
    \end{lemma}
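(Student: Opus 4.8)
This lemma is the fractional chain rule of Christ and Weinstein, and I would reconstruct its proof along the following lines. The plan is to replace the homogeneous Sobolev norm by an equivalent square‑function norm built from differences: for $0<s<1$ and $1<r<\infty$ one has, with $|\alpha|=s$,
$$\|D^{\alpha}g\|_{L^{r}(\R^{2})}\ \sim\ \Big\|\Big(\int_{\R^{2}}\frac{|g(\cdot+y)-g(\cdot)|^{2}}{|y|^{2+2s}}\,dy\Big)^{1/2}\Big\|_{L^{r}(\R^{2})}\ =:\ \|\mathcal S_{s}g\|_{L^{r}},$$
where for the anisotropic operator $D^{\alpha}=D^{\alpha_{1}}_{x}D^{\alpha_{2}}_{y}$, $\alpha_{1}+\alpha_{2}=s$, one uses the natural parabolic gauge $|y_{1}|+|y_{2}|^{1/2}$ in place of $|y|$ and the argument is unchanged. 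This equivalence is standard Littlewood--Paley theory, so I would quote it. The advantage of the reduction is that $\mathcal S_{s}$ involves only first differences of $g$, which are exactly what the chain rule controls.

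The core is then a pointwise estimate. From the fundamental theorem of calculus,
$$f(u(x))-f(u(x-y))=\Big(\int_{0}^{1}f'\big(u(x-y)+\theta(u(x)-u(x-y))\big)\,d\theta\Big)\big(u(x)-u(x-y)\big).$$
If $|f'|$ is uniformly bounded this at once gives $|f(u(x))-f(u(x-y))|\le\|f'\|_{L^{\infty}}\,|u(x)-u(x-y)|$, hence $\mathcal S_{s}(f(u))(x)\le\|f'\|_{L^{\infty}}\,\mathcal S_{s}u(x)$ for all $x$, and taking $L^{r}$ norms yields \eqref{4.3}. For the general estimate \eqref{4.2} the argument of $f'$ runs along a segment in the \emph{range} of $u$ and cannot be pulled out of the $\theta$-average; to handle this I would split the $y$-integral defining $\mathcal S_{s}(f(u))(x)$ according to whether $|u(x)-u(x-y)|$ is small or large compared with $|u(x)|$. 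On the ``small'' part the segment stays in an annulus around $u(x)$, so after a dyadic decomposition in $|u(x)|$ the $\theta$-average is dominated by $\big(\mathcal M(|f'(u)|^{q})(x)\big)^{1/q}$ for a fixed $q$ with $1\le q<r_{1}$, where $\mathcal M$ is the Hardy--Littlewood maximal operator; on the ``large'' part the excess of $|u(x)-u(x-y)|$ over $|u(x)|$ is reabsorbed into the $\mathcal S_{s}u$ factor, with $|f'|$ along the segment again majorized by the same maximal average. This produces a pointwise bound $\mathcal S_{s}(f(u))(x)\lesssim\big(\mathcal M(|f'(u)|^{q})(x)\big)^{1/q}\,\mathcal S_{s}u(x)$, after which Hölder's inequality in $x$ with $\tfrac1{r_{2}}=\tfrac1{r_{1}}+\tfrac1{r}$ together with the $L^{r_{1}/q}$-boundedness of $\mathcal M$ (valid since $r_{1}/q>1$) gives $\|D^{\alpha}f(u)\|_{L^{r_{2}}}\lesssim\|f'(u)\|_{L^{r_{1}}}\|D^{\alpha}u\|_{L^{r}}$.

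The main obstacle is precisely this last point: in the Taylor expansion the argument of $f'$ lies on a segment in the target $\C$, not on a spatial neighbourhood, so it cannot be replaced by the single value $f'(u(\cdot))$, and interposing the Hardy--Littlewood maximal operator (with the slightly smaller exponent $q<r_{1}$, needed because $\mathcal M$ fails to be bounded on $L^{1}$) is what makes the split-region estimate close; the square-function equivalence and the concluding Hölder step are routine. Since the statement coincides with Proposition~3.1 of \cite{Weinstein_KdV_DP_SA}, one may also simply cite it and omit the argument.
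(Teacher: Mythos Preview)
The paper gives no proof of this lemma at all: it simply states the result with the parenthetical citation to Proposition~3.1 of \cite{Weinstein_KdV_DP_SA} and moves on. Your closing remark---that one may simply cite Christ--Weinstein and omit the argument---is therefore exactly what the paper does, and your sketched reconstruction (square-function characterization of $\|D^{\alpha}g\|_{L^r}$, pointwise difference estimate via the fundamental theorem of calculus, and the maximal-function trick to handle the $\theta$-average) is a faithful outline of the original Christ--Weinstein proof, so nothing is missing.
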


    \begin{lemma}[The product rule (Leibnitz rule) for fractional derivatives] (see  \emph{Proposition 3.3} in \cite{Weinstein_KdV_DP_SA} or \emph{Lemma 2.6} in \cite{Kenig_KdV_Gen} for high-dimensional cases) 
        Let $0<s<1$, $u$, $v: \R^2 \to \C$ and $f\in C^1(\C)$. Then
        \begin{equation}\label{4.4}
            \|D^{\alpha} (uv)\|_{L^{r_2}} \lesssim \|v\|_{L^{r_1}} \|D^{\alpha} u\|_{L^r} + \|u\|_{L^{\tilde{r}_1}} \|D^{\alpha} v\|_{L^{\tilde{r}}}
        \end{equation}
        for $\frac{1}{r_2} = \frac{1}{r_1} + \frac{1}{r} = \frac{1}{\tilde{r}_1} + \frac{1}{\tilde{r}}$ with $1<r$, $r_1$, $\tilde{r}$, $\tilde{r}_1$, $r_2<\infty$.
    \end{lemma}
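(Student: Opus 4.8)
\noindent\emph{Proof plan.} This is the classical fractional Leibniz (Kato--Ponce type) inequality, and it is essentially \emph{Proposition 3.3} of \cite{Weinstein_KdV_DP_SA}; the plan is to reproduce the standard proof via a Littlewood--Paley / paraproduct decomposition. First I would reduce to the homogeneous fractional derivative $D^{\alpha}=|\nabla|^{s}$ on $\R^{2}$ (the mixed directional operator $D^{\alpha}_{xy}$ with $\alpha_1+\alpha_2=s<1$ being handled by the same scheme, or reduced to the one-derivative case by iterating the $1$-dimensional rule in $x$ and then in $y$), and recall the square-function characterization $\big\|D^{s}g\big\|_{L^{q}}\sim\big\|\big(\sum_{j\in\Z}2^{2js}|P_jg|^{2}\big)^{1/2}\big\|_{L^{q}}$ valid for $1<q<\infty$, where $\{P_j\}$ are the usual Littlewood--Paley projections and $S_j=\sum_{k\le j}P_k$. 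It is here, and in the Hardy--Littlewood and Fefferman--Stein maximal inequalities used below, that the restrictions $1<r,r_1,\tilde r,\tilde r_1,r_2<\infty$ enter.

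Next I would split $uv=\Sigma_1+\Sigma_2+\Sigma_3$ with $\Sigma_1=\sum_j(S_{j-2}v)(P_ju)$, $\Sigma_2=\sum_j(S_{j-2}u)(P_jv)$, and the resonant part $\Sigma_3=\sum_j(P_ju)(\widetilde P_jv)$, $\widetilde P_j=\sum_{|k-j|\le2}P_k$. The summands of $\Sigma_1$ and $\Sigma_2$ are frequency-localized in the annulus $|\zeta|\sim2^{j}$, so applying $|\nabla|^{s}$ to the $j$-th piece only costs a factor $2^{js}$ and preserves the localization; the square-function bound, the pointwise estimate $\sup_j|S_{j-2}v|\lesssim Mv$, and H\"older's inequality with $\tfrac1{r_2}=\tfrac1{r_1}+\tfrac1r$ then give $\big\|D^{s}\Sigma_1\big\|_{L^{r_2}}\lesssim\|v\|_{L^{r_1}}\|D^{s}u\|_{L^{r}}$, and symmetrically $\big\|D^{s}\Sigma_2\big\|_{L^{r_2}}\lesssim\|u\|_{L^{\tilde r_1}}\|D^{s}v\|_{L^{\tilde r}}$.

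The genuinely delicate term, and the one I expect to be the main obstacle, is the resonant part $\Sigma_3$: its $j$-th summand is frequency-supported in the \emph{ball} $|\zeta|\lesssim2^{j}$, so $|\nabla|^{s}$ no longer localizes at scale $2^{j}$ and a crude bound yields only the strictly larger Besov quantity $\sum_j2^{js}\|P_ju\|_{L^{r}}$. The fix is to write $|\nabla|^{s}\Sigma_3=\sum_{l}P_l|\nabla|^{s}\Sigma_3$ and use that $P_l\big((P_ju)(\widetilde P_jv)\big)$ vanishes unless $j\ge l-c$, so that (up to a harmless maximal function) $2^{ls}\big|P_l\big((P_ju)(\widetilde P_jv)\big)\big|\lesssim 2^{-(j-l)s}\cdot2^{js}|P_ju|\,|\widetilde P_jv|$ for $j\ge l-c$; since $s>0$ the dyadic weights $(2^{-ms})_{m\ge-c}$ are summable, so Young's inequality in $l$ together with the Fefferman--Stein inequality reduces matters to $\big\|\big(\sum_j2^{2js}|P_ju|^{2}|\widetilde P_jv|^{2}\big)^{1/2}\big\|_{L^{r_2}}\lesssim\|v\|_{L^{r_1}}\|D^{s}u\|_{L^{r}}$, obtained by factoring out $\sup_j|\widetilde P_jv|\lesssim Mv$ and applying H\"older. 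Adding the three contributions yields \eqref{4.4}. Note that $s>0$ is used only for the summability in $\Sigma_3$ and $s<1$ only to keep the reduction to $|\nabla|^{s}$ and its directional analogue clean; the remaining estimates for $\Sigma_1,\Sigma_2$ are a routine application of Coifman--Meyer paraproduct bounds.
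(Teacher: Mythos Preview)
Your proof is correct and is precisely the standard Littlewood--Paley/paraproduct argument for the Kato--Ponce inequality. The paper itself does not give a proof: it simply writes ``The proof follows the idea in the appendix of \cite{Kato_E_NS}'', and the Kato--Ponce appendix is exactly the paraproduct scheme you outline, so your proposal is in full agreement with (indeed, a fleshed-out version of) what the paper defers to.
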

    \noindent The proof follows the idea in the appendix of \cite{Kato_E_NS}.

\begin{remark}\label{remark4.3}
Lemmas 4.1 and 4.2 are valid for functions in $\R^2$. To use these lemmas for functions in $\R\times [0, 1]$, we use an extension operator
$E$ such that $Eu \in W^{s, p } (\R^2 ) $ and $Eu \big | _{\R \times [0, 1]} = u $ for any $ u \in W^{s, p } (\R\times [0,1] )$ satisfying
$$
\| Eu \|_{ W^{s, p } (\R^2 )} \leq C \| u \|_{ W^{s, p } (\R\times [0,1] )}\, .
$$
The existence of such extension operator has been discussed in Chapter 5 of Adams' book \cite{Adams}. Therefore, by using the extension operator, Lemmas 4.1 and 4.2 are valid for functions in $\R\times [0, 1]$ except that the derivatives on the right hand sides of inequalities are replaced by $W^{|\alpha|, p }$-norms of $u$ and $v$.
\end{remark}

    First, we aim to prove the following theorem.
    \begin{thm}\label{thm_4.3} 
        Choose $r\in[2,4]$. Let $\mu>0$ such that $$\displaystyle \|\varphi\|_{H^s(\R \times [0,1])} + \sum_{j=1,2} \|h_j\|_{\HH^s(0, T)} \le \mu$$ and $\varphi$, $h$ satisfy certain compatibility conditions. Then
        \begin{itemize}
        \item[(a)]
            For $0\le s < 1/2$ and $3\le p \le 4$, or $1/2 \le s < 1$ and $3\le p \le \frac{3-2s}{1-s}$, or $s=1$ and $3\le p < \infty$, there is a $ T>0$ such that for $r\in[2,4]$ a unique solution $u \in \X^s_T$ of (\ref{4.1}) exists. Moreover,
            \begin{align}
                & \left\|\mathcal{A}[u]-\mathcal{A}[v]\right\|_{\X^s_T} \le (1/2) \, \|u-v\|_{\X^s_T}\, , \label{4.5} \\
                & \left\|\mathcal{A}[u]\right\|_{\X^s_T} \le M \label{4.6}
            \end{align}
            for any $u$ and $v \in B_M^{\X^s}$.
        \item[(b)]
            For $s >1 $ and $(p, s)$ satisfying \eqref{1.4.1}, there exists a $T>0$ such that  (\ref{4.1}) has a unique solution $u \in \Y^s_T $. Also, for $u$ and $v \in B_M^{\Y^s}$
            \begin{align}
                & \left\|\mathcal{A}[u]-\mathcal{A}[v]\right\|_{\Y^0_T} \le (1/2) \, \|u-v\|_{\Y^0_T} \, , \label{4.7} \\
                & \left\|\mathcal{A}[u]\right\|_{\Y^s_T} \le M \, . \label{4.8}
            \end{align}
            In addition, if we further assume that
            \begin{equation}
            \mbox{$p$ is  even or for $p$ not even, either $p\ge s+2$ with $s\in\Z$ or $p\ge[s]+3$ with $s\notin\Z$  ,}\label{4.8.1}
             \end{equation}
             then (\ref{4.7}) can be improved by
            \begin{equation}
                \left\|\mathcal{A}[u]-\mathcal{A}[v]\right\|_{\Y^s_T} \le (1/2) \, \|u-v\|_{\Y^s_T} \, , \label{4.9}
            \end{equation}
            for $u$ and $v \in B_M^{\Y^s}$.
        \end{itemize}
    \end{thm}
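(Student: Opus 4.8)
\medskip
\noindent\emph{Proof strategy.} The plan is to obtain the solution of (\ref{4.1}) as the fixed point of the map $\mathcal{A}=\mathcal{L}+\mathcal{N}$, where $\mathcal{L}:=W_b[h_1,h_2]+W_0(t)\varphi$ is the data part and $\mathcal{N}[u]:=i\int_0^t W_0(t-\tau)f(u)(\tau)\,d\tau$ with $f(u)=\lambda|u|^{p-2}u$. I would treat $r=4$ and recover $r\in[2,4)$ afterwards, either by re-running the Section~\ref{sec_3} estimates (which hold for every $r\in[2,4]$) on the already-constructed solution or by interpolating its $L^\infty_tH^s$ and $L^4_tW^{s,4}$ bounds. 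By (\ref{3.2}) of Proposition~\ref{prop_3.1}, by the restriction to $[0,T]\subset[0,1]$ of (\ref{3.1}), and by (\ref{3.11}) of Proposition~\ref{prop_3.4}, one first gets $\|\mathcal{L}\|_{\X^s_T}\le C_0\mu$ (resp. $\|\mathcal{L}\|_{\Y^s_T}\le C_0\mu$) with $C_0$ independent of $T\in(0,1]$; then I would fix the ball radius $M:=2C_0\mu$ and reduce everything to choosing $T^*=T^*(\mu)\le\min\{T,1\}$ for which $\mathcal{N}$ maps $B_M$ into $B_{M/2}$ and is a $\tfrac12$-contraction, after which Banach's theorem yields existence and uniqueness and the contraction property yields continuous dependence.

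For part~(a) the core is the nonlinear bound. Using Proposition~\ref{prop_3.3} — inequality (\ref{3.7}) with $q$ chosen just above $4\sigma/(1+\sigma)$ so that $\beta:=1+\sigma-4\sigma/q>0$, or (\ref{3.8}) with $q=4/3$ — one starts from $\|\mathcal{N}[u]\|_{\X^s_{T^*}}\lesssim (T^*)^{\beta}\|f(u)\|_{L^q_t([0,T^*];\,W^{s,q}_{xy}(\R\times[0,1]))}$ with $\beta\ge0$. To control the right side I would extend $u$ to $\R^2$ (Remark~\ref{remark4.3}) and combine the fractional chain rule — which, since $|f'(u)|\lesssim|u|^{p-2}$, gives $\|f(u)\|_{W^{s,q}_{xy}}\lesssim\|u\|_{L^{(p-2)q_1}_{xy}}^{p-2}\|u\|_{W^{s,q_2}_{xy}}$ with $\tfrac1q=\tfrac1{q_1}+\tfrac1{q_2}$ — with Hölder in $t$ and the Sobolev embeddings $W^{s,4}_{xy},H^s_{xy}\hookrightarrow L^{\rho}_{xy}$ on the strip, so as to absorb $\|u\|_{L^{(p-2)q_1}_{txy}}^{p-2}$ into $\|u\|_{\X^s_{T^*}}$ at the cost of a further nonnegative power $(T^*)^{\gamma}$. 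These exponents can be matched ($q_2\in[2,4]$, $\tfrac1{q_1}+\tfrac1{q_2}=\tfrac1q$, and $(p-2)q_1$ inside the admissible embedding range) exactly when $3\le p\le4$ for $0\le s<\tfrac12$, when $3\le p\le\frac{3-2s}{1-s}$ for $\tfrac12\le s<1$, and for all finite $p$ when $s=1$ (since $H^1\hookrightarrow L^\rho$ for every $\rho<\infty$) — which is where the thresholds on $p$ originate. This gives $\|\mathcal{N}[u]\|_{\X^s_{T^*}}\le C_1(T^*)^{\varepsilon}\|u\|_{\X^s_{T^*}}^{p-1}$, and, writing $f(u)-f(v)=(u-v)\int_0^1\partial_zf(v+\theta(u-v))\,d\theta+\overline{u-v}\int_0^1\partial_{\bar z}f(v+\theta(u-v))\,d\theta$ and applying the fractional Leibniz and chain rules once more, the matching difference bound $\|\mathcal{N}[u]-\mathcal{N}[v]\|_{\X^s_{T^*}}\le C_1(T^*)^{\varepsilon}(\|u\|_{\X^s_{T^*}}^{p-2}+\|v\|_{\X^s_{T^*}}^{p-2})\|u-v\|_{\X^s_{T^*}}$. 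When $\varepsilon>0$ one simply picks $T^*$ so small that $C_1(T^*)^{\varepsilon}M^{p-1}\le M/2$ and $2C_1(T^*)^{\varepsilon}M^{p-2}\le\tfrac12$, which is (\ref{4.6}) and (\ref{4.5}); at the borderline exponents ($p=4$ for $s<\tfrac12$, $p=\frac{3-2s}{1-s}$ for $s\ge\tfrac12$), where this power of $T$ degenerates to $0$, I would instead work inside a ball of $\X^s_{T^*}$ that also controls the $L^4_tW^{s,4}$-norm and exploit that $\|W_0(\cdot)\varphi\|_{L^4_t([0,T^*];\,W^{s,4}_{xy})}\to0$ as $T^*\to0$ (by Proposition~\ref{prop_3.1} and dominated convergence), i.e.\ a Cazenave--Weissler type argument.

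For part~(b), $s>1$, the embedding $H^s(\R\times[0,1])\hookrightarrow L^\infty$ makes the auxiliary Strichartz norm unnecessary and I would work directly in $\Y^s_T=C_tH^s$. The $\Y^0$-contraction (\ref{4.7}) needs only $f\in C^1(\C)$ (true for $p>2$): the pointwise bound $|f(u)-f(v)|\lesssim(|u|^{p-2}+|v|^{p-2})|u-v|$, the $L^2$-boundedness of $W_0(t)$, and the time integral give $\|\mathcal{N}[u]-\mathcal{N}[v]\|_{\Y^0_T}\lesssim T\,M^{p-2}\|u-v\|_{\Y^0_T}$, a $\tfrac12$-contraction for $T$ small. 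For the ball bound (\ref{4.8}) I would use a Moser-type estimate $\|f(u)\|_{H^s}\lesssim(1+\|u\|_{L^\infty})^{p-2}\|u\|_{H^s}$ together with $\|\mathcal{N}[u]\|_{\Y^s_T}\lesssim T\sup_{t\in[0,T]}\|f(u(t))\|_{H^s}$, so that $\|\mathcal{A}[u]\|_{\Y^s_T}\le C_0\mu+C_2TM^{p-1}\le M$ for $T$ small; the Moser estimate requires enough fractional differentiability of $z\mapsto|z|^{p-2}z$ near the origin for the iterated Leibniz/chain rules to close, and the admissible $(s,p)$ pairs for which they do are precisely those in (\ref{1.4.1}). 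Since $B_M^{\Y^s}$ is closed, hence complete, in the $\Y^0$-metric (bounded sets of $H^s$ being weak-$*$ closed in $L^2$), Banach's theorem applies. Under the stronger hypothesis (\ref{4.8.1}), $z\mapsto|z|^{p-2}z$ is smooth enough that the same rules also control $\|f(u)-f(v)\|_{H^s}\lesssim(\|u\|_{H^s}^{p-2}+\|v\|_{H^s}^{p-2})\|u-v\|_{H^s}$, upgrading (\ref{4.7}) to the full $\Y^s$-contraction (\ref{4.9}).

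The step I expect to be the main obstacle is the nonlinear estimate in part~(a): one must juggle Hölder exponents and Sobolev embeddings on the strip so that $f(u)$ simultaneously lands in the $L^q$-space dual to the Bourgain-type estimate of Proposition~\ref{prop_3.3}, reproduces the $\X^s_T$-norm of $u$, and leaves a strictly positive power of $T$, and it is precisely this three-way matching that pins down the admissible ranges of $p$; the borderline exponents need the extra Cazenave--Weissler device. A secondary difficulty, for $s>1$ and $p$ not even, is the merely finite smoothness of $z\mapsto|z|^{p-2}z$, which is why the contraction in part~(b) is first established at the $\Y^0$ level and upgraded to $\Y^s$ only under (\ref{4.8.1}). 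Finally, note (cf.\ Remark~\ref{rmk_2.7}) that the compatibility conditions are not used in solving the integral equation (\ref{4.1}) itself; they enter only when identifying the fixed point with a mild solution of (\ref{2.1}).
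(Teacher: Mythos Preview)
Your proposal is correct and follows essentially the same contraction-mapping strategy as the paper: linear bounds from Propositions~\ref{prop_3.1}, \ref{prop_3.3} and \ref{prop_3.4}, fractional chain/Leibniz rules for the nonlinearity with the same exponent-matching that produces the thresholds on $p$, a Cazenave--Weissler device at the borderline exponents, and for $s>1$ a contraction in $\Y^0$ combined with $\Y^s$-boundedness. The only cosmetic differences are that the paper tracks the factor $T^{1/2-\sigma}$ from (\ref{3.1}) explicitly (balancing $M$ against it as in (\ref{4.15})) rather than avoiding it by first applying (\ref{3.1}) on $[0,1]$ and then restricting, and that for $s>1$ under (\ref{1.4.1}) the paper builds the explicit Cauchy sequence $u_{n+1}=\mathcal A[u_n]$ in $\Y^0$ and appeals to reflexivity, rather than invoking closedness of $B_M^{\Y^s}$ in the $\Y^0$-metric.
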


    \begin{proof}
        First, consider the case for $0 \le s < \frac{1}{2}$. Assume
        \begin{equation*}
            3 \le p \le r \le 4 \label{NLSE_IB_2d_stp_loc_WP_exiuni_llreg_r} \, ,
        \end{equation*}
        which implies that
        \begin{equation*}\label{NLSE_IB_2d_stp_loc_WP_exiuni_llreg_sobemb_xy}
            1 + \dfrac{s(p-2)}{2} \ge \dfrac{p}{r} \quad \text{or equivalently} \quad \dfrac{r(p-2)}{r-2} \le \dfrac{2r}{2-rs} \, .
        \end{equation*}
        In addition,
        \begin{equation*}\label{NLSE_IB_2d_stp_loc_WP_exiuni_llreg_sobemb_t}
            r \ge r'(p-1)= \frac{r(p-1)}{r-1} \, .
        \end{equation*}
        Then, by Sobolev embedding theorem, $W^{s,r} \hookrightarrow L^{\frac{r(p-2)}{r-2}}$ and $\|u(t)\|_{L^{\frac{r(p-2)}{r-2}}} \le \| u(t)\|_{W^{s, r}}$.

        If $q=r'=\frac{r}{r-1}$ is chosen, then $q \in \left(\frac{4}{3},2\right]$. Let $u \in B_M^{\X}$ and $\alpha$ be a multi-index such that $|\alpha|=s$ as usual. We know that $u: \R\times [0,1]\times \R \to \C$ and $f\in C^1(\C)$ and $|f'(u)| \lesssim |u|^{p-2} < \infty$ for $p\ge3$. For $0 \le s \le 1/2$ and $r\in(2,4]$, the chain rule (\ref{4.2}) and Remark \ref{remark4.3} suggest that
        \begin{align*}
            \left\|D^{\alpha} f(u)(t)\right\|&_{L^{r'}} \lesssim \left\|f'(u)(t)\right\|_{L^{\frac{r}{r-2}}} \| u(t)\|_{W^{s, r}} \\
            &\le \|u(t)\|_{L^{\frac{r(p-2)}{r-2}}}^{p-2} \| u(t)\|_{W^{s, r}} \le \| u(t)\|_{W^{s, r}}^{p-1} \, .
        \end{align*}
        Next, take the $L^{r'}$-norm of both sides of the inequality above with respect to time $t$ and  obtain
        \begin{align*} 
            \left\|D^{\alpha} f(u)\right\|&_{L^{r'}_t\left([0,T]; L^{r'}_{xy}(\R \times [0,1])\right)} \lesssim \| u\|_{L^{\frac{r(p-1)}{r-1}}_t\left([0,T]; W^{s, r}_{xy} ( \R\times [0,1]) \right)}^{p-1} \nonumber \\
            &\le T^{1-\frac{p}{r}} \| u\|_{L^r_t\left([0,T]; W^{s, r}_{xy}(\R \times [0,1])\right)}^{p-1}\quad .
        \end{align*}
        Equivalently,
        \begin{equation}\label{4.10}
            \left\|f(u)\right\|_{L^{r'}_t\left([0,T]; W^{s,r'}_{xy}(\R \times [0,1])\right)} \lesssim T^{1-\frac{p}{r}} \|u\|_{L^r_t\left([0,T]; W^{s,r}_{xy}(\R \times [0,1])\right)}^{p-1} \le T^{1-\frac{p}{r}} \|u\|_{\X^s_T}^{p-1} \, .
        \end{equation}
        (The investigation for $s=0$ can be found in \cite{Takaoka_NLSE_2d_Periodic}.)
        Assume $u, v \in \X^s_T$ and $w = u \cdot \theta + v \cdot (1-\theta)$ for $\theta\in[0,1]$. Then, we have
        \begin{equation*}
            |u|^{p-2}u - |v|^{p-2}v = \int_0^1 \left[\dfrac{p}{2} |w|^{p-2} + \left(\dfrac{p}{2}-1\right) |w|^{p-4} w^2\right] \cdot (u-v) \, d\theta \, .
        \end{equation*}
        Based upon the proof of Theorem 4.3 in \cite{Ran_NLSE_2d_uhp}, if $p>3$, (\ref{4.4}) yields
        \begin{align*}
            \Big \|D^{\alpha} \big [& |u|^{p-2}u(t) - |v|^{p-2}v(t)\big ] \Big \|_{L^{r'}} \\
            =& \left\|\int_0^1 D^{\alpha}\left[\frac{p}{2} |w|^{p-2} + \left(\frac{p}{2}-1\right) |w|^{p-4} w^2\right] \cdot (u-v) \, d\theta \right\|_{L^{r'}} \\
            \le& \sup_{\theta\in[0,1]} \left\|D^{\alpha}\left[\left(\frac{p}{2} |w|^{p-2} + \left(\frac{p}{2}-1\right) |w|^{p-4} w^2\right) \cdot (u-v)\right] \right\|_{L^{r'}} \\
            \lesssim& \sup_{\theta\in[0,1]} \left\||w(t)|^{p-3} \right\|_{L^{\frac{r(p-2)}{(r-2)(p-3)}}} \cdot \| w(t)\|_{W^{s, r}} \cdot \|u(t)-v(t)\|_{L^{\frac{r(p-2)}{r-2}}} \\
            & + \sup_{\theta\in[0,1]} \|w(t)\|_{L^{\frac{r(p-2)}{r-2}}}^{p-2} \cdot \|u(t)-v(t)\|_{W^{s, r}} \\
            =& \sup_{\theta\in[0,1]} \| w(t)\|_{W^{s, r}}^{p-2} \cdot \|u(t)-v(t)\|_{L^{\frac{p}{1-s}}} + \sup_{\theta\in[0,1]} \| w(t)\|_{W^{s, r}} \cdot \|u(t)-v(t)\|_{W^{s, r}} \\
            \lesssim& \left(\| u(t)\|_{W^{s,r}}^{p-2} + \| v(t)\|_{W^{s,r}}^{p-2}\right) \cdot \|(u(t)-v(t))\|_{W^{s,r}}\, .
        \end{align*}
        For $p=3$, (\ref{4.3}) shows that
        \begin{align*}
            & \left\|D^{\alpha} \left[|u|^{p-2}u(t) - |v|^{p-2}v(t)\right] \right\|_{L^{r'}} \\
            &\lesssim \sup_{\theta\in[0,1]} \left\|w(t) \right\|_{W^{s, r}} \cdot \|u(t)-v(t)\|_{L^{\frac{r(p-2)}{r-2}}} + \sup_{\theta\in[0,1]} \|w(t)\|_{L^{\frac{r(p-2)}{r-2}}} \cdot \|(u(t)-v(t))\|_{W^{s, r}} \\
            &\lesssim \left(\| u(t)\|_{W^{s,r}}+ \| v(t)\|_{W^{s,r}}\right) \cdot \|(u(t)-v(t))\|_{W^{s,r}}\, .
        \end{align*}
        Thus, Sobolev embedding theorem implies
        \begin{equation*}
            \left\|f(u)(t)-f(v)(t)\right\|_{W^{s,r'}} \lesssim \left(\|u(t)\|_{W^{s,r}_{xy}}^{p-2} + \|v(t)\|_{W^{s,r}_{xy}}^{p-2} \right) \cdot \left\|u(t)-v(t) \right\|_{W^{s,r}} \, .
        \end{equation*}
        In addition, $\frac{p}{r}\le1$ guarantees $r \ge \frac{r(p-2)}{r-2}$ and the following estimate for the difference of nonlinearity holds:
        \begin{align}\label{4.11}
            & \left\|f(u)-f(v)\right\|_{L^{r'}_t\left([0,T]; W^{s,r'}_{xy}(\R \times [0,1])\right)} \nonumber \\
            &\le \left(\|u\|_{L^{\frac{r(p-2)}{r-2}}_t\left([0,T]; W^{s,r}_{xy}(\R \times [0,1])\right)}^{p-2} + \|v\|_{L^{\frac{r(p-2)}{r-2}}_t\left([0,T]; W^{s,r}_{xy}(\R \times [0,1])\right)}^{p-2} \right) \|u-v\|_{L^r_t\left([0,T]; L^r_{xy}(\R \times [0,1])\right)} \nonumber \\
            &\le T^{1-\frac{p}{r}} \left(\|u\|_{L^r_t\left([0,T]; W^{s,r}_{xy}(\R \times [0,1])\right)}^{p-2} + \|v\|_{L^r_t\left([0,T]; W^{s,r}_{xy}(\R \times [0,1])\right)}^{p-2} \right) \|u-v\|_{L^r_t\left([0,T]; W^{s,r}_{xy}(\R \times [0,1])\right)} \, .
        \end{align}
        Note that $\mathcal{A}[u]$ is defined in (\ref{4.1}) and assume that $u, v \in B_M^{\X^s}$ for $0\le s < 1/2$ and $3\le p \le 4$ with some $r\in[p,4)$. Also
        \begin{equation*}
            \left\|\mathcal{A}(u)\right\|_{\X_T} = \left\|\mathcal{A}(u)\right\|_{L^{\infty}_t\left([0,T]; H^s_{xy}(\R \times [0,1])\right)} + \left\|\mathcal{A}(u)\right\|_{L^r_t\left([0,T]; W^{s,r}_{xy}(\R \times [0,1])\right)} \, .
        \end{equation*}
        If $3\le p <4$, we can choose $r\in(p,4)$ and let $\sigma \in (\frac{1}{2},\frac{r}{3r-4}]$, which implies $r' \in [\frac{4\sigma}{1+\sigma},2]$. Now, for $\frac{1}{2}<\sigma\le \frac{5r-4}{4(3r-4)}$, it is found that
        \begin{align}
            1+\sigma -\frac{4\sigma}{r'} &= 1-\left(3-\dfrac{4}{r}\right)\sigma \nonumber \\
            &\ge 1-\left(3-\dfrac{4}{r}\right) \cdot \dfrac{5r-4}{4(3r-4)} = \dfrac{1}{r} - \dfrac{1}{4} := \theta_r > 0\, . \label{4.12}
        \end{align}
        According to \emph{Proposition~\ref{prop_3.1}}, and (\ref{3.7}), (\ref{3.11}) in  \emph{Proposition~\ref{prop_3.3}} and \emph{\ref{prop_3.4}}, and (\ref{4.10}), it is deduced that
        \begin{align*}
            \left\|\mathcal{A}(u)\right\|&_{\X^s_T} \le \|W_b[h_1,h_2]\|_{\X^s_T} + \left\|W_0\phi \right\|_{\X_T} + \left\|\Phi_{0,f}\right\|_{\X^s_T} \\
            &\lesssim \sum_{j=1,2} \|h_j\|_{\HH^s(0, T)} + \left\|\phi \right\|_{H^s_{xy}} + T^{\theta_r} \left\|f \right\|_{L^{r'}_t \left([0,T]; H^{s,r'}_{xy} \right)} \\
            &\le \left(1+T^{\frac{1}{2}}+T^{\frac{1}{2}-\sigma}\right)\mu + T^{\theta_r} \left\|f \right\|_{L^{r'}_t \left([0,T]; H^{s,r'}_{xy} \right)} \\
            &\lesssim \left(1+T^{\frac{1}{2}}+T^{\frac{1}{2}-\sigma}\right)\mu + T^{\theta_r+1-\frac{p}{r}} \|u\|_{L^r_t\left([0,T]; W^{s,r}_{xy}(\R \times [0,1])\right)}^{p-1} \\
            &\le \left(1+T^{\frac{1}{2}}+T^{\frac{1}{2}-\sigma}\right)\mu + T^{\theta_r+1-\frac{p}{r}} \|u\|_{\X^s_T}^{p-1} \, ,
        \end{align*}
        where $\theta_r+1-\frac{p}{r} > \theta_r$ is fixed. Considering the Lipschitz norm, (\ref{4.11}) gives \begin{align*}
            \left\|\mathcal{A}(u)-\mathcal{A}(v)\right\|&_{\X^s_T} = \left\|\Phi_{0,f}(u)-\Phi_{0,f}(v) \right\|_{\X^s_T}
            \lesssim T^{\theta_r} \left\|f(u)-f(v) \right\|_{L^{r'}_t \left([0,T]; L^{r'}_{xy} \right)} \\
            \lesssim &T^{\theta_r+1-\frac{p}{r}} \left(\|u\|_{\X^s_T}^{p-2} + \|v\|_{\X^s_T}^{p-2} \right) \|u-v\|_{\X^s_T}
            \lesssim T^{\theta_r+1-\frac{p}{r}} M^{p-2} \|u-v\|_{\X^s_T}\, .
        \end{align*}
        Thus, with some constants $C_0$, $C_1$, we obtain
        \begin{equation}
            \left\|\mathcal{A}[u]\right\|_{\X^s_T} \le C_0 \left(\mu+ T^{\frac{1}{2}}\mu + T^{\frac{1}{2}-\sigma}\mu + T^{\theta_r+1-\frac{p}{r}} M^{p-1} \right)\, , \label{4.13}
        \end{equation}
        and
        \begin{equation}
            \left\|\mathcal{A}[u]-\mathcal{A}[v]\right\|_{\X^s_T} \le C_1 T^{\theta_r+1-\frac{p}{r}} M^{p-2} \|u-v\|_{\X^s_T} \, . \label{4.14}
        \end{equation}
        Choose $T$ small so that $T \le 1$ and $T^{\theta_r+1-\frac{p}{r}} M^{p-1} \le T^{\frac{1}{2}-\sigma}\mu$, which implies that we need $$T = \min\left\{1, \, \left(\frac{\mu}{M^{p-1}}\right)^{1/(\theta_r+1-\frac{p}{r}-(\frac{1}{2}-\sigma))}, \, \left(\frac{1}{2C_1M^{p-2}}\right)^{1/(\theta_r+1-\frac{p}{r})} \right\}.$$  Observe that the right hand side of (\ref{4.13}) yields
        \begin{align}
            C_0 & \left(\mu+ T^{\frac{1}{2}}\mu + T^{\frac{1}{2}-\sigma}\mu + T^{\theta_r+1-\frac{p}{r}} M^{p-1} \right) \le 4C_0 \mu T^{\frac{1}{2}-\sigma} \nonumber \\
            & = 4C_0\mu \max
            \left\{1, \, \left(\frac{\mu}{M^{p-1}}\right)^{(\frac{1}{2}-\sigma)/(\theta_r+1-\frac{p}{r}-(\frac{1}{2}-\sigma))}, \, \left(\frac{1}{2C_1M^{p-2}}\right)^{(\frac{1}{2}-\sigma)/(\theta_r+1-\frac{p}{r})} \right\} \label{4.15}
        \end{align}
        from which we can show that if $\sigma > \frac12$ is sufficiently close to $\frac12$,
       a function $M_0 (\mu )$ can be found such that the right side of (\ref{4.15}) is less than or equal to $M$ for any $M \geq M_0 (\mu )$. With the choice of $T$ and $M$, we further obtain that $C_1 T^{\theta_r+1-\frac{p}{r}} M^{p-2}\le\frac{1}{2}$ in (\ref{4.14}).

       Now, consider the critical case (based upon the technique applied here) when $p=4$. Let $r=4$. By (\ref{3.8}), we claim that
        \begin{equation*}
            \left\|\mathcal{A}(u)\right\|_{\X^s_T} \lesssim C_T(\mu + \|u\|_{\X^s_T}^{p-1})\, , \quad
            \left\|\mathcal{A}(u)-\mathcal{A}(v)\right\|_{\X^s_T} \lesssim C_T M^{p-2} \|u-v\|_{\X^s_T}
        \end{equation*}
        where $C_T>0$ depends upon $T$ only. For $0<T<\infty$, let $M=2C_T\mu$. If $u$, $v\in B_{2C_T\mu}^{\X^s}$ with $\mu$ small enough so that $$
        \left\|\mathcal{A}(u)\right\|_{\X^s_T} \le C_T(\mu + M^{p-1}) \le 2C_T\mu
        $$
        and
        $$
        \left\|\mathcal{A}(u)-\mathcal{A}(v)\right\|_{\X^s_T} \lesssim \widetilde{C_T} \mu^{p-2} \|u-v\|_{\X^s_T}
        $$
        where $\widetilde{C_T} \mu^{p-2} \le \frac{1}{2}$, we can obtain (\ref{4.5}) and (\ref{4.6}).
        Note that this argument needs $\mu$ small. For $\mu$ not small, the technique in Sections 4.5 and 4.7 of \cite{Cazenave_NLSE} can be applied to prove the same results.

        For $\frac{1}{2} \le s <1$, by Sobolev embedding theorem, if $\frac{r(p-2)}{r-2} = \frac{2}{1-s}$, $H^s \hookrightarrow L^{\frac{r(p-2)}{r-2}}$, that is, $\|u(t)\|_{L^{\frac{r(p-2)}{r-2}}} \lesssim \|u(t)\|_{H^s}$, which holds if and only if
        \begin{equation*}
            \dfrac{4}{2-(p-2)(1-s)} = r < 4 \quad \text{or equivalently} \quad 3 \le p < \dfrac{3-2s}{1-s} \, .
        \end{equation*}
        We repeat the above argument to deduce that
        \begin{align*}
            \left\|D^{\alpha} f(u)(t)\right\|&_{L^{r'}} \lesssim \|u(t)\|_{L^{\frac{r(p-2)}{r-2}}}^{p-2} \| u(t)\|_{W^{s, r}} \le \|u(t)\|_{H^s}^{p-2} \| u(t)\|_{W^{s, r}} \, .
        \end{align*}
        Taking the norm with respect to $t$, it is obtained that
        \begin{equation*} 
            \left\|D^{\alpha} f(u)\right\|_{L^{r'}_t\left([0,T]; L^{r'}_{xy}(\R \times [0,1])\right)} \lesssim T^{1-\frac{2}{r}} \|u(t)\|_{L^{\infty}\left([0,T]; H^s\right)}^{p-2} \| u\|_{L^r_t\left([0,T]; W^{s, r}_{xy}(\R \times [0,1])\right)} \, ,
        \end{equation*}
        and
        \begin{equation*}\label{NLSE_IB_2d_stp_loc_WP_exiuni_lreg_M_est_slr} 
            \left\|f(u)\right\|_{L^{r'}_t\left([0,T]; W^{s,r'}_{xy}(\R \times [0,1])\right)} \lesssim T^{1-\frac{2}{r}} \|u(t)\|_{L^{\infty}\left([0,T]; H^s\right)}^{p-2} \| u\|_{L^r_t\left([0,T]; W^{s, r}_{xy}(\R \times [0,1])\right)} \, ,
        \end{equation*}
        which give
        \begin{equation*}\label{NLSE_IB_2d_stp_loc_WP_exiuni_lreg_M_est_lrwr} 
            \left\|f(u)\right\|_{L^{r'}_t\left([0,T]; W^{s,r'}_{xy}(\R \times [0,1])\right)} \lesssim T^{1-\frac{2}{r}} \|u\|_{\X^s_T}^{p-1} \, .
        \end{equation*}
        Similarly, for $p\ge3$,
        \begin{align*}
            & \left\|f(u)-f(v)\right\|_{L^{r'}_t\left([0,T]; W^{s,r'}_{xy}(\R \times [0,1])\right)} \\
            &\le T^{1-\frac{2}{r}} \left(\|u\|_{L^r_t\left([0,T]; W^{s,r}_{xy}(\R \times [0,1])\right)}^{p-2} + \|v\|_{L^r_t\left([0,T]; W^{s,r}_{xy}(\R \times [0,1])\right)}^{p-2} \right) \|u-v\|_{L^{\infty}_t\left([0,T]; H^s_{xy}(\R \times [0,1])\right)} \\
            & \qquad + T^{1-\frac{2}{r}} \left(\|u\|_{L^{\infty}_t\left([0,T]; H^s_{xy}(\R \times [0,1])\right)}^{p-2} + \|v\|_{L^{\infty}_t\left([0,T]; H^s_{xy}(\R \times [0,1])\right)}^{p-2} \right) \|u-v\|_{L^r_t\left([0,T]; W^{s,r}_{xy}(\R \times [0,1])\right)}\, .
        \end{align*}
        Thus, we can reach the similar conclusions as
        \begin{align*}
            & \left\|\mathcal{A}(u)\right\|_{\X^s_T} \lesssim \left(1+T^{\frac{1}{2}}+T^{\frac{1}{2}-\sigma}\right)\mu + T^{\theta_r+\frac{(p-2)(1-s)}{2}} \|u\|_{\X^s_T}^{p-1} \, ,\\
            & \left\|\mathcal{A}(u)-\mathcal{A}(v)\right\|_{\X^s_T} \lesssim T^{\theta_r+\frac{(p-2)(1-s)}{2}} M^{p-2} \|u-v\|_{\X^s_T} \, ,
        \end{align*}
        which lead to
        \begin{equation*}
            \left\|\mathcal{A}[u]\right\|_{\X^s_T} \le C_0 \left(\mu+ T^{\frac{1}{2}}\mu + T^{\frac{1}{2}-\sigma}\mu + T^{\theta_r+\frac{(p-2)(1-s)}{2}} M^{p-1} \right)\, , \label{NLSE_IB_2d_stp_loc_WP_exiuni_lreg_M_est}
        \end{equation*}
        and
        \begin{equation*}
            \left\|\mathcal{A}[u]-\mathcal{A}[v]\right\|_{\X^s_T} \le C_1 T^{\theta_r+\frac{(p-2)(1-s)}{2}} M^{p-2} \|u-v\|_{\X^s_T} \, . \label{NLSE_IB_2d_stp_loc_WP_exiuni_lreg_dist_est}
        \end{equation*}
        Hence, if letting
        $$
        T = \min\left\{1, \, \left(\dfrac{\mu}{M^{p-1}}\right)^{1/(\theta_r+\frac{(p-2)(1-s)}{2}-(\frac{1}{2}-\sigma))}, \, \left(\dfrac{1}{2C_1M^{p-2}}\right)^{1/(\theta_r+\frac{(p-2)(1-s)}{2})} \right\} \,  ,
        $$
        an argument similar to (\ref{4.15}) for $M$ can be used to obtain (\ref{4.5}) and (\ref{4.6}).

        If $p = \frac{3-2s}{1-s}$, then $r=4$, which is a critical situation again. For sufficiently small norms of the initial and boundary data  with some fixed $T>0$, (\ref{4.5}) and (\ref{4.6}) can also be obtained. Again, for large initial and boundary data, the argument in Sections 4.5 and 4.7 of \cite{Cazenave_NLSE} can be carried out.

        By contraction mapping theorem, we can conclude that the problem (\ref{2.1}) (or (\ref{2.5})) has a unique solution $u$ in $\X^s_T$ for $0\le s <\frac{1}{2}$ with $3\le p\le4$ or $\frac{1}{2}\le s <1$ with $3\le p \le \frac{3-2s}{1-s}$.

        Next, consider $s=1$. Let $r$ be arbitrarily chosen in the open interval $(2,4)$. It is known that $H^1 \hookrightarrow L^r$ for any $r<\infty$. The chain rule on the first derivative of $f(u)$ with respect to $(x,y)$ gives
        \begin{align*}
            \left\|\nabla f(u)(t)\right\|&_{L^{r'}} \lesssim \|u(t)\|_{L^{\frac{r(p-2)}{r-2}}}^{p-2} \| u(t)\|_{W^{1, r}} \le \|u(t)\|_{H^1}^{p-2} \| u(t)\|_{W^{1, r}} \, .
        \end{align*}
        Then, taking the norm in $t$, we have
        \begin{equation*} 
            \left\|\nabla f(u)\right\|_{L^{r'}_t\left([0,T]; L^{r'}_{xy}(\R \times [0,1])\right)} \lesssim T^{1-\frac{2}{r}} \|u(t)\|_{L^{\infty}\left([0,T]; H^1\right)}^{p-2} \| u\|_{L^r_t\left([0,T]; W^{1, r}_{xy}(\R \times [0,1])\right)} \, ,
        \end{equation*}
        and
        \begin{equation*}\label{NLSE_IB_2d_stp_loc_WP_exiuni_mreg_M_est_slr} 
            \left\|f(u)\right\|_{L^{r'}_t\left([0,T]; W^{1,r'}_{xy}(\R \times [0,1])\right)} \lesssim T^{1-\frac{2}{r}} \|u(t)\|_{L^{\infty}\left([0,T]; H^1\right)}^{p-2} \| u\|_{L^r_t\left([0,T]; W^{1, r}_{xy}(\R \times [0,1])\right)} \, ,
        \end{equation*}
        which imply
        \begin{equation*}\label{NLSE_IB_2d_stp_loc_WP_exiuni_mreg_M_est_lrwr} 
            \left\|f(u)\right\|_{L^{r'}_t\left([0,T]; W^{1,r'}_{xy}(\R \times [0,1])\right)} \lesssim T^{1-\frac{2}{r}} \|u\|_{\X^1_T}^{p-1} \, .
        \end{equation*}
        For the Lipschitz estimate with $p\ge3$,
        \begin{align*}
            & \left\|f(u)-f(v)\right\|_{L^{r'}_t\left([0,T]; W^{1,r'}_{xy}(\R \times [0,1])\right)} \\
            &\le T^{1-\frac{2}{r}} \left(\|u\|_{L^r_t\left([0,T]; W^{1,r}_{xy}(\R \times [0,1])\right)}^{p-2} + \|v\|_{L^r_t\left([0,T]; W^{1,r}_{xy}(\R \times [0,1])\right)}^{p-2} \right) \|u-v\|_{L^{\infty}_t\left([0,T]; H^1_{xy}(\R \times [0,1])\right)} \\
            & \qquad + T^{1-\frac{2}{r}} \left(\|u\|_{L^{\infty}_t\left([0,T]; H^1_{xy}(\R \times [0,1])\right)}^{p-2} + \|v\|_{L^{\infty}_t\left([0,T]; H^1_{xy}(\R \times [0,1])\right)}^{p-2} \right) \|u-v\|_{L^r_t\left([0,T]; W^{1,r}_{xy}(\R \times [0,1])\right)}\, .
        \end{align*}
        Thus, for any $r\in(2,4)$ and some constants $C_0$, $C_1$,
        \begin{align*}
            & \left\|\mathcal{A}(u)\right\|_{\X^1_T} \le C_0 \left(\mu +T^{\frac{1}{2}}\mu + T^{\frac{1}{2}-\sigma}\mu + T^{\theta_r+1-\frac{2}{r}} M^{p-1} \right)\, , \\
            & \left\|\mathcal{A}(u)-\mathcal{A}(v)\right\|_{\X^1_T} \le C_1 T^{\theta_r+1-\frac{2}{r}} M^{p-2} \|u-v\|_{\X^1_T} \, .
        \end{align*}
        Repeating the same argument as that for the case $0\le s <1$, if
        $$
         T = \min\left\{1, \, \left(\dfrac{\mu}{M^{p-1}}\right)^{1/(\theta_r+1-\frac{2}{r}-(\frac{1}{2}-\sigma))}, \, \left(\dfrac{1}{2C_1M^{p-2}}\right)^{1/(\theta_r+1-\frac{2}{r})} \right\}
        $$
        and $M$ is chosen similarly to (\ref{4.15}), then (\ref{4.5}) and (\ref{4.6}) are valid again. Thus, the contraction mapping theorem yields a unique solution in $\X^s_T$.
        Since $\theta_r+1-\frac{2}{r} > 0$, $\forall \, r\in(2,4)$, we do not need to impose any restrictions on $\mu$.

        After finishing the discussion for lower regularity, we now study the existence and uniqueness of the solution with $s>1$. We first derive a set of similar estimates.
        Assuming that $p$ and $s$ satisfy \eqref{1.4.1}, for $u$, $v \in B_M^{\Y^s}$ with $M>0$ given, it can be shown that
        \begin{align}
            &\|f(u)\|_{L^2((0,T); \, H^s)}  \lesssim T \|u\|_{L^{\infty}((0,T); \, H^s)}^{p-1} \, , \label{4.16} \\
            &\|f(u)-f(v)\|_{L^2((0,T); \, L^2)}  \lesssim T M^{p-2} \|u-v\|_{L^{\infty}((0,T); \, L^2)} \, . \label{4.17}
        \end{align}

        First, assume that  $p$ and $s$ satisfy \eqref{4.8.1}, which is stronger than \eqref{1.4.1}. The Lipschitz continuity holds for $H^s$ norm, i.e.,
        \begin{equation}
            \|f(u)-f(v)\|_{L^1((0,T); \, H^s)} \lesssim T M^{p-2} \|u-v\|_{L^{\infty}((0,T); \, H^s)} \, . \label{4.18}
        \end{equation}
        By (\ref{4.16}) and (\ref{4.18}),
        \begin{align*}
            & \left\|\mathcal{A}(u)\right\|_{\Y^s_T} \le \|W_b[h_1,h_2]\|_{\Y^s_T} + \left\|W_0\phi \right\|_{\X_T} + \left\|\Phi_{0,f}\right\|_{\Y^s_T} \\
            & \quad \lesssim  \left(1+T^{\frac{1}{2}}+T^{\frac{1}{2}-\sigma}\right)\mu + T^{\theta_r} \left\|f \right\|_{L^2_t \left([0,T]; H^s_{xy} \right)} \lesssim \left(1+T^{\frac{1}{2}}+T^{\frac{1}{2}-\sigma}\right)\mu + T^{\theta_r+1} \|u\|_{L^{\infty}((0,T); \, H^s)}^{p-1} \\
            &\qquad \le C_0 \left(1+T^{\frac{1}{2}}+T^{\frac{1}{2}-\sigma}\right)\mu + T^{\theta_r+1} M^{p-1} \, ,
        \end{align*}
        as well as
        \begin{align*}
            \left\|\mathcal{A}(u)-\mathcal{A}(v)\right\|&_{\Y^s_T} = \left\|\Phi_{0,f}(u)-\Phi_{0,f}(v) \right\|_{\Y^s_T}
            \lesssim T^{\theta_r} \left\|f(u)-f(v) \right\|_{L^2_t \left([0,T]; H^s_{xy} \right)} \\
            \lesssim & C_1T^{\theta_r+1} M^{p-2} \|u-v\|_{\Y^s_T} \, ,
        \end{align*}
        where $C_0, C_1$ are constants.
        Now, we choose $\sigma > \frac{1}{2}$ but close enough to $\frac{1}{2}$ so that $\theta_r>0$ again. Let $M$ and $T$ be such that  $$T = \min\left\{1, \, \left(\dfrac{\mu}{M^{p-1}}\right)^{1/(\theta_r+\sigma))}, \, \left(\dfrac{1}{2C_1M^{p-2}}\right)^{1/(\theta_r+\frac{1}{2})} \right\}$$ and $M$ is found similarly in (\ref{4.15}), which yield (\ref{4.8}) and (\ref{4.9}).
        By the contraction mapping principle, we can find a fixed point $u \in \Y^s_T$.

        For other $p$ and $s$ satisfying \eqref{1.4.1}, we can argue as follows. Since $s+1\le p< s+2$ for $s\in\Z$ or $[s]+2 \le p<[s]+3$ for $s\notin\Z$ with $p$ not even, (\ref{4.17}) gives
        \begin{equation}
            \left\|\mathcal{A}[u]-\mathcal{A}[v]\right\|_{\mathcal{Y}^0_T} \le C_1 T^{\theta_r+1} M^{p-2} \|u-v\|_{\mathcal{Y}^0_T}\, . \label{4.19}
        \end{equation}
        Hence, (\ref{4.7}) holds for the same choices of $M$ and $T$.
With (\ref{4.19}), a fixed point $u \in \Y^0_T$ can also be found by finding a sequence $\{u_n\}_{n\ge1} \subset B_M^{\Y^s} \subset \Y^s_T$ converging to $u$ in $\Y^0_T$; i.e., choose $u_1 \in B_M^{\Y^s}$. Then let $u_2 = \mathcal{A}[u_1]$, which satisfies $u_2 \in B_M^{\Y^s}$ because of (\ref{4.8}). We keep defining $u_{n+1} = \mathcal{A}[u_n]$ and again $u_{n+1} \in B_M^{\Y^s}$; moreover $\|u_{n+1}-u_n\|_{\Y^0_T} = \left\|\mathcal{A}[u_n]-\mathcal{A}[u_{n-1}]\right\|_{\Y^0_T} \le \frac{1}{2} \|u_n-u_{n-1}\|_{\Y^0_T}$ by (\ref{4.7}). Thus, a Cauchy sequence $\{u_n\}_{n\ge1} \subset B_M^{\Y^s} \subset \Y^s_T$ with $u_n \to u$ in $\Y^0_T$ is created. We may see that the problem has a unique solution in $\Y^s_T$. Since $\Y^s_T$ is also a reflexive Banach space, we conclude that $u \in \Y^s_T$ is the unique solution of (\ref{2.5}) for $s>1$. Note that the Lipschitz continuity for $\mathcal{A}$ in $\Y^s_T$ does not hold for this case. The proof is finished.
    \end{proof}

    Next,  we consider the maximal existence interval $[0,T_{\max})$ of the solution found in \emph{Theorem~\ref{thm_4.3}}.
    \begin{thm}\label{thm_4.4}
        Let $p\ge3$  and $s \geq 0$ satisfying \eqref{1.4.1}. Assume that a unique solution $u$ to (\ref{2.1}) exists in $\X^s_T$ if $0 \le s < \frac{1}{2}$ with $3\le p \le 4$ or $\frac{1}{2} \le s < 1$ with $3\le p < \frac{3-2s}{1-s}$ or $s=1$ with $3\le p < \infty$, or in $\Y^s_T$ if $s>1$, for $t \in [0,T] $ with $\varphi \in H^s (\R \times [0,1])$ and $h_j \in \HH^s_{loc} (0, \infty)$ for $T > 0$, $j=1$, $2$. Let $T_{\max}= \sup T$ and suppose $T_{\max}<\infty$. Also, define $u^{\ast}$ on $[0,T_{\max})$ as the solution of (\ref{2.1}) in $\X^s_{T_{\max}}$ if $0 \le s < \frac{1}{2}$ with $3\le p \le 4$ or $\frac{1}{2} \le s < 1$ with $3\le p < \frac{3-2s}{1-s}$ or $s=1$ with $3\le p < \infty$, or $\Y^s_{T_{\max}}$ if $s>1$, with $u^{\ast}(t) = u(t)$ on $[0,T]$ whose existence and uniqueness have been proved in Theorem~\ref{thm_4.3}. Then, $\displaystyle \lim_{t\uparrow T_{\max}} \|u^*(t)\|_{H^s(\R \times [0,1])} = \infty$.
    \end{thm}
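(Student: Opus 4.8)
The plan is to argue by contradiction, exploiting the fact that the local existence time produced by \emph{Theorem~\ref{thm_4.3}} depends on the data only through a bound on their norms, so that a solution attaining bounded $H^s$-size along a sequence approaching $T_{\max}$ could be restarted and continued past $T_{\max}$.

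Suppose $T_{\max}<\infty$ but the conclusion fails, i.e. $\liminf_{t\uparrow T_{\max}}\|u^*(t)\|_{H^s(\R\times[0,1])}<\infty$. Then I would pick a sequence $t_n\uparrow T_{\max}$ with $\|u^*(t_n)\|_{H^s(\R\times[0,1])}\le C$ for all $n$, some finite $C$. Since $h_j\in\HH^s_{loc}(0,\infty)$, the time-translates $h_{j,n}(x,\tau):=h_j(x,t_n+\tau)$ obey $\|h_{j,n}\|_{\HH^s(0,1)}\le\|h_j\|_{\HH^s(0,T_{\max}+1)}=:C'<\infty$ uniformly in $n$. Moreover $\varphi_n:=u^*(\cdot,\cdot,t_n)\in H^s(\R\times[0,1])$, and because $u^*$ solves (\ref{2.1}) on $[0,t_n]$ its traces satisfy $u^*(x,0,t_n)=h_1(x,t_n)=h_{1,n}(x,0)$ and $u^*(x,1,t_n)=h_2(x,t_n)=h_{2,n}(x,0)$ (together with the higher-order relations of \emph{Remark~\ref{rem1.6}} for larger $s$, likewise inherited from the equation), so $(\varphi_n,h_{1,n},h_{2,n})$ satisfy the compatibility conditions at the restart time; see also \emph{Remark~\ref{rmk_2.7}}.

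Next I would apply \emph{Theorem~\ref{thm_4.3}} to the IBVP (\ref{2.1}) with initial datum $\varphi_n$ and boundary data $h_{1,n},h_{2,n}$. Inspecting the formula for $T$ there, it is determined solely by an upper bound $\mu\ge\|\varphi_n\|_{H^s}+\sum_j\|h_{j,n}\|_{\HH^s(0,1)}$ (bounded above by $C+2C'$, independently of $n$) together with the induced radius $M=M_0(\mu)$; hence there is a single $\delta\in(0,1]$, independent of $n$, and a solution $v_n$ of (\ref{2.1}) on $[0,\delta]$ lying in $\X^s_\delta$ (respectively $\Y^s_\delta$ when $s>1$). In the endpoint case $p=4$ with $s<\frac12$, where the existence time is not directly governed by the size of the data, one instead invokes the supplementary argument of Sections~4.5 and~4.7 of \cite{Cazenave_NLSE}, which again yields a local existence time uniform on bounded sets of data. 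I would then glue: define $w$ on $[0,t_n+\delta]$ by $w=u^*$ on $[0,t_n]$ and $w(x,y,t)=v_n(x,y,t-t_n)$ on $[t_n,t_n+\delta]$. Since $v_n(\cdot,\cdot,0)=\varphi_n=u^*(\cdot,\cdot,t_n)$, $w$ is continuous in $t$ with values in $H^s$, so $w\in\X^s_{t_n+\delta}$ (respectively $\Y^s_{t_n+\delta}$); and using the semigroup property $W_0(t+\tau)=W_0(t)W_0(\tau)$ together with the time-additivity of the Duhamel integral and of the boundary operator $W_b$, one checks that $w$ solves the integral equation (\ref{2.5}), hence (\ref{2.1}), on all of $[0,t_n+\delta]$. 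Choosing $n$ so large that $T_{\max}-t_n<\delta$ then exhibits a solution on an interval strictly larger than $[0,T_{\max})$, contradicting $T_{\max}=\sup T$. Therefore $\liminf_{t\uparrow T_{\max}}\|u^*(t)\|_{H^s(\R\times[0,1])}=\infty$, which is exactly $\lim_{t\uparrow T_{\max}}\|u^*(t)\|_{H^s(\R\times[0,1])}=\infty$.

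The hard part is not the contradiction scheme itself but two bookkeeping points. First, extracting an existence time $\delta$ uniform in $n$: this is immediate from the explicit formula for $T$ in \emph{Theorem~\ref{thm_4.3}} in the subcritical regimes, but in the critical case $p=4$ it requires the Cazenave-type refinement just mentioned. Second, verifying that the restarted pieces genuinely splice into a solution of (\ref{2.5}) on the full interval; this rests on the evolution/semigroup structure of $W_0$ and on the additivity of $W_b$ and of the Duhamel term, and on the observation that the compatibility conditions at the restart time are automatically inherited from $u^*$, so that \emph{Theorem~\ref{thm_4.3}} genuinely applies to the shifted problem.
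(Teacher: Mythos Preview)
Your argument is correct and is precisely the classical extension/blow-up alternative that the paper invokes by reference to \cite{Ran_NLSE_2d_uhp}; the paper's own proof is a one-sentence pointer to that reference, so your write-up is actually more detailed than the original.
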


    \begin{proof}
        The proof can be obtained using classical extension procedure as an analogue of the proof for the maximal existence interval in \cite{Ran_NLSE_2d_uhp} except that the discussion is performed on the domain $\R\times[0,1]$.
    \end{proof}

    The continuous dependence property of solutions on the initial and boundary data can be derived as follows.
    \begin{thm} 
        Let $p$ and $s$ satisfy the conditions described in Theorem \ref{thm_4.4}.
        Assume $\{\varphi_n\}$ be a sequence of functions in $H^s(\R \times [0,1])$ and $\varphi \in H^s(\R \times [0,1])$ so that $\varphi_n \to \varphi$ as $n \to \infty$ in $H^s(\R \times [0,1])$. Let $ h_1, h_2 $  and $h_{1,n}, h_{2, n}, n = 1, 2, \cdots$  satisfy
        $$
            h_1, h_2, h_{1,n}, h_{2,n} \in \HH^s(0, T) \quad \mbox{ and } \quad h_{1,n} \to h_1, \, h_{2,n} \to h_2
        $$
        as $n \to \infty$ in $\HH^s(0, T)$ for some $T>0$. Let $u_n$ be the solutions to (\ref{2.1}) with $u_n(x,y,0)=\varphi_n(x,y)$ and  $u_n(x, 0, t) =h_{1,n}(x,t), u_n(x, 1, t) =h_{2,n}(x,t)$ and $u$ be the solution with $u(x,y,0)=\varphi(x,y)$ and $u(x, 0, t) =h_{1}(x,t), u(x, 1, t) =h_{2}(x,t)$, respectively. Then $u_n \to u$ as $n \to \infty$ in $X_T$ with $\left\|u_n\right\|_{X_T}\le M$ where $X_T=\X^s_T$ if $0 \le s < \frac{1}{2}$ with $3\le p \le 4$ or $\frac{1}{2} \le s < 1$ with $3\le p < \frac{3-2s}{1-s}$ or $s=1$ with $3\le p < \infty$, or $\Y^s_T$ if $s>1$, respectively.
    \end{thm}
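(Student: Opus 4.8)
I would build directly on the Banach fixed-point apparatus of \emph{Theorem~\ref{thm_4.3}}, so that the only inputs are the linear estimates of \emph{Propositions~\ref{prop_3.1} and \ref{prop_3.4}} and the contraction bounds \eqref{4.5}, \eqref{4.7}, \eqref{4.9}. First normalize: since $\varphi_n\to\varphi$ and $h_{j,n}\to h_j$, the data are uniformly bounded, say $\|\varphi_n\|_{H^s(\R\times[0,1])}+\sum_{j=1,2}\|h_{j,n}\|_{\HH^s(0,T)}\le\mu$ for all $n$ (and likewise for $(\varphi,h_j)$), so by \emph{Theorem~\ref{thm_4.3}} there are a common step length $T^*\in(0,T]$ and a radius $M=M(\mu)$ with every $u_n$ and $u$ lying in the corresponding ball $B_M^{\X^s}$ (resp. $B_M^{\Y^s}$) over $[0,T^*]$. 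Writing $u_n=\mathcal{A}_n[u_n]$ and $u=\mathcal{A}[u]$, where $\mathcal{A}_n$ and $\mathcal{A}$ denote the operator \eqref{4.1} formed from the respective data, subtract to get
\begin{equation*}
u_n-u=\Big(W_b[h_{1,n}-h_1,\,h_{2,n}-h_2]+W_0(t)(\varphi_n-\varphi)\Big)+i\int_0^t W_0(t-\tau)\big(f(u_n(\tau))-f(u(\tau))\big)\,d\tau .
\end{equation*}
The first bracket is linear in the data differences, hence by \emph{Propositions~\ref{prop_3.1} and \ref{prop_3.4}} its $X_{T^*}$-norm is $\le\varepsilon_n:=C\big(\|\varphi_n-\varphi\|_{H^s(\R\times[0,1])}+\sum_{j}\|h_{j,n}-h_j\|_{\HH^s(0,T)}\big)\to 0$.

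For the Duhamel term I would reuse, verbatim, the nonlinear estimates from the proof of \emph{Theorem~\ref{thm_4.3}}, now with $(u_n,u)$ in place of $(u,v)$. When $0\le s\le 1$ (with the stated $p$), or $s>1$ under the stronger hypothesis \eqref{4.8.1}, the contraction estimates \eqref{4.5} and \eqref{4.9} give $\|\Phi_{0,\,f(u_n)-f(u)}\|_{X_{T^*}}\le\frac12\|u_n-u\|_{X_{T^*}}$ (the constant being $\le\frac12$ by the very choice of $T^*$), so that $\|u_n-u\|_{X_{T^*}}\le\varepsilon_n+\frac12\|u_n-u\|_{X_{T^*}}$ and hence $\|u_n-u\|_{X_{T^*}}\le 2\varepsilon_n\to 0$. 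Convergence on all of $[0,T]$ then follows by a finite patching argument: cover $[0,T]$ by $\lceil T/T^*\rceil$ subintervals of length $\le T^*$, and at the $(k{+}1)$-st step use $u_n(\cdot,\cdot,kT^*)\to u(\cdot,\cdot,kT^*)$ in $H^s(\R\times[0,1])$ (coming from the already established convergence in $X_{kT^*}\hookrightarrow C([0,kT^*];H^s)$) together with the convergence of the restricted boundary data in $\HH^s$, re-running the same estimate; since only finitely many steps occur, the bound $\|u_n\|_{X_T}\le M$ stays uniform in $n$.

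The delicate case is $s>1$ when only \eqref{1.4.1} holds, for then \emph{Theorem~\ref{thm_4.3}} supplies a contraction only in the $\Y^0_T$-norm, via \eqref{4.7}--\eqref{4.17}. The same argument in that norm gives $u_n\to u$ in $\Y^0_{T^*}=C([0,T^*];L^2)$, because convergence of the data in $\HH^s$ and $H^s$ forces convergence in $\HH^0$ and $L^2$. Interpolating this against the uniform bound $\|u_n\|_{\Y^s_{T^*}}\le M$ yields $u_n\to u$ in $C([0,T^*];H^{s'})$ for every $s'<s$; picking $s'\in(1,s)$ and using $H^{s'}(\R\times[0,1])\hookrightarrow L^\infty(\R\times[0,1])$ in two dimensions gives $u_n\to u$ in $L^\infty_{t,x,y}$. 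I would then treat the Duhamel term at regularity $s$ through \eqref{3.7}: writing $f(u_n)-f(u)=\int_0^1 G\big(\theta u_n+(1-\theta)u\big)(u_n-u)\,d\theta$ with $G$ homogeneous of degree $p-2$ and expanding via the fractional Leibniz and chain rules of \emph{Lemmas~4.1 and 4.2} and \emph{Remark~\ref{remark4.3}}, every term in which strictly fewer than $s$ derivatives land on $u_n-u$ carries a factor $\|u_n-u\|_{L^\infty_{t,x,y}}$ multiplied by a factor controlled by $M$, hence $\to 0$, while the one remaining term is $\lesssim C(M)\|u_n-u\|_{\Y^s_{T^*}}$. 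After shrinking $T^*$ so that the time factor in \eqref{3.7} makes that last constant $\le\frac12$, the loop closes, $\|u_n-u\|_{\Y^s_{T^*}}\le\varepsilon_n+o(1)+\frac12\|u_n-u\|_{\Y^s_{T^*}}\to 0$, and the finite patching above propagates it to $[0,T]$.

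The main obstacle is exactly this last case: because no contraction holds in $\Y^s_T$, one cannot obtain $H^s$-convergence in a single step and must instead first obtain smallness of $u_n-u$ in a weaker norm, upgrade it to $L^\infty$-smallness by interpolation and Sobolev embedding, and then exploit that the nonlinearity places at most the top $s$ derivatives on a single factor, the remaining factors always being bounded by $M$. A secondary, routine point is the bookkeeping showing that $M$ and $T^*$ remain uniform over the finitely many patching steps, which is immediate since both depend only on the uniform data bound $\mu$.
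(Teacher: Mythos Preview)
Your argument is correct and follows essentially the same route as the paper: split $u_n-u$ into the linear piece (controlled by \emph{Propositions~\ref{prop_3.1} and \ref{prop_3.4}}) and the Duhamel piece, absorb the latter using the contraction constants from \emph{Theorem~\ref{thm_4.3}}, and patch over $[0,T]$ in finitely many steps of uniform length depending only on the data bound.

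The one place where the treatments differ in presentation is the case $s>1$ without \eqref{4.8.1}. The paper phrases the upgrade from $\Y^0$-convergence to $\Y^s$-convergence via an abstract estimate
\[
\|f(u)-f(v)\|_{L^1((0,T);H^s)}\lesssim TM^{p-2}\|u-v\|_{\Y^s_T}+T^{\theta_r+1}\,\varepsilon\big(\|u-v\|_{\Y^0_T}\big),
\]
with $\varepsilon(t)\to 0$ as $t\to 0$, and then absorbs the first term. Your version makes this $\varepsilon$ concrete: interpolate $\Y^0$-convergence against the uniform $\Y^s$-bound to obtain convergence in $C_t H^{s'}$ for all $s'<s$, then expand $f(u_n)-f(u)$ by the Leibniz/chain rules so that every term not carrying the full $s$ derivatives on $u_n-u$ is controlled by an intermediate norm of $u_n-u$ (hence $o(1)$) times powers of $M$. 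This is exactly the mechanism hidden inside the paper's $\varepsilon$, so the two arguments are equivalent; yours has the virtue of being more explicit. One small caution: the claim that the lower-order terms ``carry a factor $\|u_n-u\|_{L^\infty_{t,x,y}}$'' is a slight oversimplification --- what is really used is convergence of $u_n-u$ in $C_tH^{s'}$ for every $s'<s$, which handles all terms with fewer than $s$ derivatives on $u_n-u$ via the appropriate H\"older/Sobolev pairings. With that understood, the argument goes through.
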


    \begin{proof}
        First, consider $0 \le s \le 1$ with assumptions on $p$ and $r$ given in \emph{Theorems~\ref{thm_4.3}} and \emph{\ref{thm_4.4}} which guarantee the existence of a common interval $[0, T_c]$ for $u_n$ and $u$  because of the choice of $T_{\max}$ only dependent upon the initial and boundary conditions. Furthermore, from the proof of (\ref{4.5}), for $\theta_r$ defined by (\ref{4.12}) and $\sigma>\frac{1}{2}$, we can obtain
        \begin{align*}
            & \left\|u-u_n\right\|_{\X^s_T} = \left\|\mathcal{A}[u]-\mathcal{A}[u_n]\right\|_{\X^s_T} \le C \bigg ( \left(T^{\frac{1}{2}} + T^{\frac{1}{2}-\sigma} \right) \left\|\varphi-\varphi_n\right\|_{H^s(\R \times [0,1])} \\
            & \qquad + \sum_{j=1,2}\left\|h_j-h_{j,n}\right\|_{\HH^s(0, T) } + T^{\theta_r} M^{p-2} \|u-u_n\|_{\X^s_T}\bigg ) \, .
        \end{align*}
        Let $T$ satisfy $C T^{\theta_r} M^{p-2} \leq 1/2$. Then
        \begin{equation*}
            \left \|u-u_n\right\|_{\X^s_T} \le 2 \widetilde{C} \bigg ( \left\|\varphi-\varphi_n\right\|_{H^s(\R \times [0,1])} + \sum_{j=1,2}\left\|h_j-h_{j,n}\right\|_{\HH^s(0, T) }  \bigg ) \, .
        \end{equation*}
        Since $T$ only depends upon the uniform bounds for $u, u_n, \varphi, \varphi_n , h, h_n$ in their respective norms with $t \in [0, T_c]$, the above inequality holds for $T, 2T, \dots$ until reaching $T_c$. The continuous dependence is proved for $0 \leq s \le 1$.

        For $s > 1$, first notice that by (\ref{4.19}), $$\left\|\mathcal{A}[u]-\mathcal{A}[v]\right\|_{\Y^0_T} \le  C_1 T^{\theta_r+1} M^{p-2} \, \|u-v\|_{\Y^0_T} .$$ Hence
        \begin{align*}
            & \left\|u-u_n\right\|_{\Y^0_T} = \left\|\mathcal{A}[u]-\mathcal{A}[u_n]\right\|_{\Y^0_T} \le C \bigg ( \left\|\varphi-\varphi_n\right\|_{L^2(\R \times [0,1])} \\
            & \qquad + \sum_{j=1,2}\left\|h_j-h_{j,n}\right\|_{\HH^s(0, T) } + T^{\theta_r+1} M^{p-2} \|u-u_n\|_{\Y^0_T} \bigg ) \, .
        \end{align*}
        Thus, choosing $T\le1$ so that $C T^{\theta_r+1} M^{p-2} \leq 1/3$, we have
        \begin{equation*}
            \left\|u-u_n\right\|_{\Y^0_T} \le 2\widetilde{C} \bigg ( \left\|\varphi-\varphi_n\right\|_{L^2(\R \times [0,1])} + \sum_{j=1,2}\left\|h_j-h_{j,n}\right\|_{\HH^s(0, T) } \bigg ) \, \to 0
        \end{equation*}
        as $n\to 0$. To show the continuous dependence in $\Y^s_T$, note that it can be verified on the strip domain $\R\times [0,1]$ that
        \begin{align*}
            \|&f(u)-f(v)\|_{L^1((0,T); \, H^s(\R\times [0,1]))} \nonumber \\
            & \lesssim T M^{p-2} \|u-v\|_{L^{\infty}((0,T); \, H^s(\R\times [0,1]))} + T^{\theta_r+1} \varepsilon\{\|u-v\|_{L^{\infty}((0,T); \, L^2(\R\times [0,1]))}\} \, , 
        \end{align*}
        where $\varepsilon: \R^+ \to \R^+$ is a continuous function so that $\varepsilon(t)\to0^+$ as $t\to0^+$. Thus, it is straightforward to derive
        \begin{align*}
            &\left\|u-u_n\right\|_{\Y^s_T} = \left\|\mathcal{A}[u]-\mathcal{A}[u_n]\right\|_{\Y^s_T}  \le C \bigg (\left(T^{\frac{1}{2}} + T^{\frac{1}{2}-\sigma} \right) \left\|\varphi-\varphi_n\right\|_{H^s(\R \times [0,1])} \\
            & \quad + \sum_{j=1,2}\left\|h_j-h_{j,n}\right\|_{\HH^s(0, T) )}  + T^{\theta_r+1} M^{p-2} \|u-u_n\|_{\Y^s_T} + T^{\theta_r+1} \varepsilon\{\|u-u_n\|_{\Y^0_T}\} \bigg ) \, .
        \end{align*}
        Rewrite this as
        \begin{align*}
            \left\|u-u_n\right\|_{\Y^s_T} \le & 2C \bigg ( \left\|\varphi-\varphi_n\right\|_{L^2(\R \times [0,1])} + \sum_{j=1,2}\left\|h_j-h_{j,n}\right\|_{\HH^s(0, T) } + \varepsilon\{\|u-u_n\|_{\Y^0_T}\} \bigg ) \, .
        \end{align*}
        Since the right hand side of the inequality approaches zero as $n\to0$, so does the left hand side. Hence, the proof of the continuous dependence is completed.
    \end{proof}
This completes the proof of the statements \emph{(i)-(iii)} in \emph{Theorem~\ref{thm_main}}.

    Now, we discuss the possibility of removing the auxiliary space from the well-posedness for $0\le s \le 1$. In the proof of the local well-posedness, the regularity property and conditional well-posedness of (\ref{2.1}) are discussed for $0\leq s\leq 1$. By \emph{(i) and (ii)} of \emph{Theorem~\ref{thm_main}}, the argument in Section 4 of \cite{Sun_CUC_Evo} and Sections 5.1-5.5 in \cite{Cazenave_NLSE} can provide the proof of the following persistence of regularity result, i.e., if $0\le s_1 < s$, let $u$ in $\X^{s_1}_{T_{\max}}$  be the unique solution of (\ref{2.1}) with the maximal existence interval $[0,T_{\max})$ under the assumption that $0 \le s_1 < 1/2$ with $3\le p \le 4$ or $1/2 \le s_1 < 1$ with $3\le p < \frac{3-2s}{1-s_1}$ or $s_1=1$ with $3\le p < \infty$, and $[0,T_{\max})$ be given in \emph{Theorem~\ref{thm_4.3}} with $\varphi \in H^{s_1} (\R \times [0,1])$ and $h_j \in \HH^{s_1}(0, T) $, $j=1$, $2$, with $T \geq T_{\max}$. If $\varphi \in H^s (\R \times [0,1])$ and $h_j \in \HH^{s}(0, T) $ with $s >1$, then $u$ is also in $ \Y^s_{T_{\max}}$.

    \begin{prop} \label{prop_4.6}
        For $0\le s\leq 1$, the IBVP (\ref{2.1}) has the property of  persistence of regularity.
    \end{prop}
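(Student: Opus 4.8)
The plan is to upgrade the low--regularity solution furnished by \emph{Theorem~\ref{thm_4.3}} to the higher--regularity class \emph{on the same time interval}, by combining the local theory at the higher level, the uniqueness in the low--regularity class, the blow--up alternative of \emph{Theorem~\ref{thm_4.4}}, and a \emph{split} \emph{a priori} estimate in which the nonlinearity is controlled so that only one copy of the high--regularity norm appears, the remaining $p-2$ copies being absorbed into a norm that stays bounded along the low--regularity flow. Precisely, fix $0\le s_1<s\le1$, let $u\in\X^{s_1}_{T_{\max}}$ be the unique solution of (\ref{2.1}) on its maximal interval $[0,T_{\max})$ given by \emph{Theorem~\ref{thm_4.3}} and \emph{Theorem~\ref{thm_4.4}} (with $\varphi\in H^{s_1}(\R\times[0,1])$, $h_j\in\HH^{s_1}(0,T)$), and assume in addition $\varphi\in H^s(\R\times[0,1])$ and $h_j\in\HH^s(0,T)$, $j=1$, $2$, with $T\ge T_{\max}$. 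The claim to be proved is that $u\in\X^s_{T'}$ for every $T'<T_{\max}$ (and, in the range $s>1$ recorded just before the proposition, $u\in\Y^s_{T'}$).

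First I would establish the split \emph{a priori} estimate. Over any subinterval $J=[a,b]\subset[0,T_{\max})$, represent $u$ by the integral equation (\ref{2.5}) (equivalently (\ref{4.1})) based at $t=a$ and apply (\ref{3.1}), (\ref{3.7}) and (\ref{3.11}) to $W_0$, $\Phi_{0,f}$ and $W_b$; this reduces matters to bounding $\|f(u)\|_{L^{r'}_t(J;W^{s,r'}_{xy})}$. By the fractional chain rule (\ref{4.2}) — used on the strip via the extension operator of \emph{Remark~\ref{remark4.3}} — together with $|f'(u)|\lesssim|u|^{p-2}$ and the Sobolev embedding into $L^{r(p-2)/(r-2)}$ (from $W^{s_1,r}$ when $s_1<1/2$, from $H^{s_1}$ when $s_1\ge1/2$), exactly as in the proof of (\ref{4.10})--(\ref{4.11}) one gets, with $\theta_r>0$ from (\ref{4.12}) and the admissibility $p\le r$,
\begin{equation*}
\|u\|_{\X^s_{J}}\le C\Bigl(\|u(a)\|_{H^s(\R\times[0,1])}+\sum_{j=1,2}\|h_j\|_{\HH^s(0,T)}\Bigr)+C\,|J|^{\beta}\,N_{s_1}(J)^{p-2}\,\|u\|_{\X^s_{J}},
\end{equation*}
for a fixed $\beta>0$, where $N_{s_1}(J)$ denotes either $\sup_{t\in J}\|u(t)\|_{H^{s_1}}$ or $\|u\|_{L^r_t(J;W^{s_1,r})}$ (whichever the embedding uses); when $s>1$ one replaces (\ref{4.10})--(\ref{4.11}) by (\ref{4.16})--(\ref{4.18}) and $\X^s$ by $\Y^s$. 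The crucial feature is that $N_{s_1}([0,T'])$ is \emph{finite and does not blow up} on $[0,T']$ for every $T'<T_{\max}$, since $u\in\X^{s_1}_{T'}\subset C_t([0,T'];H^{s_1})\cap L^r_t([0,T'];W^{s_1,r})$.

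Next comes the continuation argument. Fix $T'<T_{\max}$ and set $B:=N_{s_1}([0,T'])<\infty$. Choose $\delta=\delta(B,p,C)>0$ with $C\delta^{\beta}B^{p-2}\le\tfrac12$; note $\delta$ depends only on the low--regularity quantity $B$, not on $\|u\|_{H^s}$. \emph{Theorem~\ref{thm_4.3}} gives $T_0>0$ with an $\X^s$--solution on $[0,T_0]$, which coincides with $u$ by uniqueness in $\X^{s_1}$; let $[0,T^*_{\max})$ be the maximal interval on which $u\in\X^s$. If $T^*_{\max}\ge T'$ we are done. Otherwise $T^*_{\max}<T'<T_{\max}$, and on every $J=[a,b]\subset[0,T^*_{\max})$ with $|J|\le\delta$ the displayed estimate can be solved for the finite quantity $\|u\|_{\X^s_J}$, yielding $\|u\|_{\X^s_J}\le 2C(\|u(a)\|_{H^s}+\sum_j\|h_j\|_{\HH^s(0,T)})$; covering $[0,T^*_{\max})$ by finitely many such subintervals (finitely many because $\delta$ depends only on $B$) and iterating bounds $\sup_{t<T^*_{\max}}\|u(t)\|_{H^s}$ by the finite data norms, contradicting the blow--up conclusion of \emph{Theorem~\ref{thm_4.4}} at $T^*_{\max}$. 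Hence $T^*_{\max}\ge T'$; since $T'<T_{\max}$ was arbitrary, $u\in\X^s_{T'}$ for all $T'<T_{\max}$, which is persistence of regularity. The case $s>1$ is identical with $\X^s$ replaced by $\Y^s$, (\ref{4.10})--(\ref{4.11}) by (\ref{4.16})--(\ref{4.18}), and using part (ii) of \emph{Theorem~\ref{thm_main}}; this also yields the statement recorded in the paragraph preceding the proposition. Throughout, this is the scheme of Section~4 of \cite{Sun_CUC_Evo} and Sections~5.1--5.5 of \cite{Cazenave_NLSE}, transferred to the domain $\R\times[0,1]$.

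The main obstacle is the split \emph{a priori} estimate at the borderline configurations — most notably $s_1$ close to $0$ together with $p=4$ (forcing $r=4$), where the Strichartz gain has no slack: there one must invoke the endpoint estimate (\ref{3.8}) and extract the required small factor from the time length through the constant $C_T$ (i.e.\ localize in time), or accept a harmless loss by taking $s_1>0$. One must also verify that the Sobolev embedding into $L^{r(p-2)/(r-2)}$ genuinely holds for \emph{all} admissible triples $(p,r,s_1)$ appearing in \emph{Theorem~\ref{thm_4.3}}, so that the $p-2$ factors may legitimately be placed in the low--regularity norm. The remaining subtlety is bookkeeping: confirming that the increment $\delta$ depends solely on quantities that remain bounded on $[0,T_{\max})$, which is exactly what makes the finite--step continuation — and hence the coincidence of the $\X^s$ and $\X^{s_1}$ maximal existence times — go through.
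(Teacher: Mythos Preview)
Your proposal is correct and is precisely the standard persistence-of-regularity scheme the paper defers to by citing Section~4 of \cite{Sun_CUC_Evo} and Sections~5.1--5.5 of \cite{Cazenave_NLSE}; the paper gives no independent argument beyond that reference. Your split estimate, continuation step, and identification of the borderline $p=r=4$ case all match the intended approach, so this is exactly what was left implicit.
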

    Then, by applying the unconditional well-posedness theorem in \cite{Sun_CUC_Evo} for (\ref{2.1}), the following theorem can be obtained, which gives \emph{(iii)} of \emph{Theorem~\ref{thm_main}}.
    \begin{thm}
        For $0\le s \le 1$, the problem (\ref{2.1}) is unconditionally well-posed.
    \end{thm}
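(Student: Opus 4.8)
The plan is to obtain this theorem as a direct application of the abstract unconditional well-posedness criterion of \cite{Sun_CUC_Evo}, whose two structural hypotheses have by now both been verified for the IBVP \eqref{2.1}: first, the conditional local well-posedness of the integral equation \eqref{2.5} (equivalently the fixed-point equation \eqref{4.1}) in the auxiliary space $\X^s_T \hookrightarrow C([0,T];H^s_{xy})$, which is exactly the content of \emph{Theorems~\ref{thm_4.3}} and \emph{\ref{thm_4.4}} together with the continuous-dependence theorem, not only for $0\le s\le 1$ but also at the higher regularity levels that will serve as the target of the bootstrap; and second, the persistence-of-regularity property recorded in \emph{Proposition~\ref{prop_4.6}}. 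First I would recall precisely the form in which \cite{Sun_CUC_Evo} states its criterion: if a Duhamel-type nonlinear integral equation is conditionally well-posed in a Banach space $\X^{s}_T\hookrightarrow C([0,T];H^{s})$ for every $s$ in an interval, enjoys persistence of regularity on that interval, and the nonlinearity together with the linear solution operators obeys the Strichartz-type and Lipschitz bounds that drive the contraction, then every mild solution in $C([0,T];H^{s})$ (in the sense of Definition~\ref{gen_soln}) automatically belongs to $\X^{s}_{T}$; consequently the conditional uniqueness of \emph{Theorem~\ref{thm_4.3}} upgrades to unconditional uniqueness, while existence and continuous dependence in $C([0,T];H^s)$ are inherited verbatim from the conditional results already proved.

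Next I would spell out the mechanism in the present setting. Let $\varphi\in H^s(\R\times[0,1])$, $h_j\in\HH^s(0,T)$, and let $u\in C([0,T];H^s(\R\times[0,1]))$ be a mild solution of \eqref{2.1} as in Definition~\ref{gen_soln}, with approximating sequence $u_n\in C([0,T];H^2)\cap C^1([0,T];L^2)$ solving \eqref{2.1} with data $\varphi_n\to\varphi$ in $H^s$ and $h_{j,n}\to h_j$ in $\HH^s(0,T)$. By Remark~\ref{remark_2.1} and Remark~\ref{rmk_2.7}, each $u_n$, being a strong $H^2$ solution, also solves the integral equation \eqref{2.5} with data $(\varphi_n,h_{1,n},h_{2,n})$. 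Applying \emph{Theorem~\ref{thm_4.3}} at the level $s_1=\min\{s,1\}$ (where \eqref{2.1} is conditionally well-posed, with solutions lying automatically in $\X^{s_1}_{T'}$ on a common interval $[0,T']$ whose length depends only on uniform bounds for the data, cf.\ \emph{Theorem~\ref{thm_4.4}}) together with persistence of regularity, \emph{Proposition~\ref{prop_4.6}}, yields $u_n\in\X^{s}_{T'}$ with a uniform bound. The contraction estimate \eqref{4.5} and the continuous-dependence argument then give $u_n\to\widetilde u$ in $\X^{s}_{T'}$, where $\widetilde u$ is the unique fixed point of $\mathcal{A}$ with data $(\varphi,h_1,h_2)$; since also $u_n\to u$ in $C([0,T];H^s)$, uniqueness of limits forces $u=\widetilde u$ on $[0,T']$, hence $u\in\X^{s}_{T'}$. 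Iterating over consecutive subintervals of length bounded below in terms of uniform norms exhausts $[0,T]$, so $u\in\X^{s}_{T}$. The conditional uniqueness of \emph{Theorem~\ref{thm_4.3}} now applies to $u$, giving unconditional uniqueness; existence and continuous dependence in $C([0,T];H^s)$ are precisely what has already been established, so \eqref{2.1} is unconditionally well-posed for $0\le s\le 1$.

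I expect the main obstacle to be the \emph{verification} step rather than the mechanism: one must check that the abstract framework of \cite{Sun_CUC_Evo}, originally set up for semigroup evolution equations on the whole space, genuinely covers the present IBVP on the strip $\R\times[0,1]$ with the non-homogeneous boundary integral operator $W_b$. Two points need care. First, $W_b$ must play the role of a bounded inhomogeneous forcing operator between the relevant Strichartz spaces; this is exactly supplied by \emph{Proposition~\ref{prop_3.4}}, estimates \eqref{3.10}--\eqref{3.11}, and is what lets $W_b[h_1,h_2]$ be treated on the same footing as the initial and Duhamel terms in the fixed-point scheme. Second, the nonlinear bounds \eqref{4.10}--\eqref{4.11} must hold simultaneously at the two regularity levels $s_1$ and $s$, with the Lipschitz estimate controlled at the lower level — this is precisely the structure that makes \emph{Proposition~\ref{prop_4.6}} a genuine bootstrapping input rather than a circular one. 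A minor technical point is that the $H^2$-level compatibility conditions (cf.\ Remark~\ref{rem1.6}) are needed only to ensure that each approximating $u_n$ is a strong solution and hence solves \eqref{2.5}; they cause no difficulty in the limiting argument. Once these verifications are in place the theorem follows at once by invoking \cite{Sun_CUC_Evo}, and by the same reference the solution produced in \emph{Theorem~\ref{thm_main}} is a mild solution in the sense of Definition~\ref{gen_soln}.
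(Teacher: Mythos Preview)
Your proposal is correct and takes essentially the same approach as the paper: the paper's proof is a single sentence invoking \emph{Theorem~2.6} in \cite{Sun_CUC_Evo} together with \emph{Proposition~\ref{prop_4.6}}, and your argument does precisely this, merely spelling out in detail the approximation-and-persistence mechanism underlying that abstract criterion and the verifications (via \emph{Propositions~\ref{prop_3.1}--\ref{prop_3.4}} and \emph{Theorems~\ref{thm_4.3}--\ref{thm_4.4}}) needed for it to apply to the present IBVP.
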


    \begin{proof}
        The claim of this theorem is a result directly from \emph{Theorem 2.6} in \cite{Sun_CUC_Evo} and \emph{Proposition~\ref{prop_4.6}}.
    \end{proof}

\section{Global Well-posedness}\label{sec_5}

    In this section, we investigate the global existence of solution for (\ref{2.1})  with $T\in(0,\infty]$. We first prove the following identities.
    \begin{lemma}
        If the solution of (\ref{2.1}) exists for any $t>0$ and is sufficiently smooth, then for arbitrary smooth function $\eta=\eta(y)$
        \begin{align}
            & \, (|u|^2)_t = -2 \text{Im} \left[(u_x \overline{u})_x + (u_y \overline{u})_y\right] \, , \label{5.1} \\[.08in]
            & \left(|u_x|^2+|u_y|^2-\dfrac{2\lambda}{p}|u|^p\right)_t = 2 \text{Re} \left[(\overline{u_t} u_x)_x + (\overline{u_t} u_y)_y\right] \, , \label{5.2} \\
            & \left(|u_y|^2-|u_x|^2+\dfrac{2\lambda}{p}|u|^p\right)_y = -2 \text{Re}(\overline{u}_x u_y)_x - i(u \overline{u_y})_t + i(u \overline{u_t})_y \, , \label{5.3} \\
            & -\eta \left(|u_y|^2-|u_x|^2+\dfrac{2\lambda}{p}|u|^p\right)_y = 2\text{Re}(\eta u_y \overline{u}_x)_x + i(\eta u \overline{u_y})_t - i (\eta \overline{u_t} u)_y \nonumber \\
            & \qquad\qquad\qquad + \eta_y \left(u \overline{u}_x\right)_x - \eta_y |u_x|^2 + \eta_y \left(u \overline{u_y}\right)_y - \eta_y |u_y|^2 + \lambda \eta_y |u|^p \, , \label{5.4} 
        \end{align}
        and
        \begin{align}\label{5.5} 
            &\left[\left(y-\dfrac{1}{2}\right) \left(|u_y|^2-|u_x|^2 \right)\right]_y \nonumber \\
            &\qquad = 2|u_y|^2 - 2\text{Re}\left[\left(y-\dfrac{1}{2}\right) u_y \overline{u}_x\right]_x - i\left[\left(y-\dfrac{1}{2}\right) u \overline{u}_y\right]_t + i \left[\left(y-\dfrac{1}{2}\right) \overline{u}_t u\right]_y \nonumber \\
            & \qquad\quad - \, \left(u \overline{u}_x\right)_x - \left(u \overline{u}_y\right)_y - \lambda \left(1-\dfrac{2}{p}\right) |u|^p - \dfrac{2\lambda}{p} \left[\left(y-\dfrac{1}{2}\right) |u|^p\right]_y\, .
        \end{align}
    \end{lemma}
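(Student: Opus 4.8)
The plan is to derive all of \eqref{5.1}--\eqref{5.5} by direct manipulation of the equation $iu_t+u_{xx}+u_{yy}+\lambda|u|^{p-2}u=0$, exploiting that every identity is a local conservation law (a divergence identity) obtained by multiplying the equation by a suitable factor and taking real or imaginary parts. First I would establish \eqref{5.1}: multiply the equation by $\bar u$, take the imaginary part, and use $\im(\bar u u_t)=\frac12(|u|^2)_t$ together with $\im(\bar u u_{xx})=\im((\bar u u_x)_x-|u_x|^2)=\im(\bar u u_x)_x$ and the analogous $y$-identity, noting that $\im(\lambda|u|^{p-2}u\bar u)=0$ since $\lambda|u|^{p-2}|u|^2$ is real. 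For \eqref{5.2} I would multiply by $\overline{u_t}$, take the real part, and recognize $\re(\overline{u_t}\,iu_t)=0$, $\re(\overline{u_t}u_{xx})=\re((\overline{u_t}u_x)_x)-\re(\overline{u_{tx}}u_x)=\re(\overline{u_t}u_x)_x-\frac12(|u_x|^2)_t$, similarly in $y$, and $\re(\overline{u_t}\lambda|u|^{p-2}u)=\frac{\lambda}{p}(|u|^p)_t$ (using $\partial_t|u|^p=p|u|^{p-2}\re(\bar u u_t)$); collecting the time derivatives on the left gives \eqref{5.2}.

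Next I would prove \eqref{5.3}, which is the analogous identity with the roles of a spatial variable and the conserved density shifted: multiply the equation by $\overline{u_y}$ and take the real part. Here $\re(iu_t\overline{u_y})=\re(i u_t\overline{u_y})$, which after integrating by parts in $t$ rewrites as a combination of $(u\overline{u_y})_t$ and $(u\overline{u_t})_y$ terms --- more precisely, $\re(i u_t\overline{u_y}) = -\frac12[i(u\overline{u_y})_t - i(u\overline{u_t})_y]$ up to the pointwise identity $\re(iu_t\overline{u_y})=-\im(u_t\overline{u_y})$ and the product-rule bookkeeping $\partial_t(u\overline{u_y})-\partial_y(u\overline{u_t}) = u_t\overline{u_y}-u\overline{u_{ty}} - (u_y\overline{u_t}+u\overline{u_{yt}})$, whose imaginary part collapses appropriately. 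The $u_{xx}\overline{u_y}$ term becomes $\re(u_{xx}\overline{u_y}) = \re((u_x\overline{u_y})_x) - \re(u_x\overline{u_{xy}}) = \re(\bar u_x u_y)_x \cdot(\text{sign})$ minus $\frac12(|u_x|^2)_y$, the $u_{yy}\overline{u_y}$ term is $\frac12(|u_y|^2)_y$, and the nonlinear term contributes $\frac{\lambda}{p}(|u|^p)_y$; grouping the $y$-derivatives yields \eqref{5.3}. Then \eqref{5.4} is obtained simply by multiplying \eqref{5.3} through by $-\eta(y)$ and moving $\eta$ inside each $y$-derivative at the cost of the $\eta_y$ correction terms --- writing $-\eta(F)_y = -(\eta F)_y + \eta_y F$ for each grouped expression and expanding $\eta_y F$ with $F = |u_y|^2-|u_x|^2+\frac{2\lambda}{p}|u|^p$ and with $F$ replaced by the right-hand-side divergence terms of \eqref{5.3}. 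Finally \eqref{5.5} is the special case $\eta(y)=-(y-\frac12)$ of \eqref{5.4}: substitute, use $\eta_y=-1$, and reorganize, the only genuinely new feature being that the $\eta_y|u_y|^2$ contribution combines with a piece already present to produce the distinguished $2|u_y|^2$ term (which is what makes this identity useful for the global estimate, since it is a nonnegative quantity).

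The calculations are all routine integration-by-parts and real/imaginary-part extraction; the only real care needed --- and the step I would flag as the main obstacle for getting signs and constants right --- is the handling of the mixed space-time terms in \eqref{5.3}, where $\re(iu_t\overline{u_y})$ must be converted into the precise combination $-i(u\overline{u_y})_t+i(u\overline{u_t})_y$ modulo terms that cancel; a sign error there propagates into \eqref{5.4} and \eqref{5.5}. I would verify this by expanding both $\partial_t(u\overline{u_y})$ and $\partial_y(u\overline{u_t})$ in full, forming the claimed combination, and checking that the non-divergence remainder is exactly $2\re(iu_t\overline{u_y})$ using $u_{ty}=u_{yt}$ and that $z-\bar z = 2i\,\im z$. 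Throughout, the hypothesis that the solution is ``sufficiently smooth'' is used only to justify interchanging derivatives and to make every term pointwise well defined; no boundary conditions or decay at $x\to\pm\infty$ enter, since these are purely local (pointwise in $(x,y,t)$) identities.
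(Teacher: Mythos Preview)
Your proposal is correct and follows essentially the same route as the paper: the paper simply cites an earlier work for \eqref{5.1}--\eqref{5.3} (your multiplier derivations are the standard ones), then obtains \eqref{5.4} by multiplying \eqref{5.3} by $\eta$ and pulling $\eta$ inside the $y$-derivatives exactly as you describe, and finally specializes to $\eta(y)=y-\tfrac12$ (you wrote $\eta=-(y-\tfrac12)$, hence $\eta_y=-1$; this is just the opposite sign convention and leads to the same \eqref{5.5} after rearrangement).
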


    \begin{proof}
        The proofs of (\ref{5.1}) to (\ref{5.3}) can be found in \cite{Ran_NLSE_2d_uhp}. For (\ref{5.4}), we multiply (\ref{5.3}) by $\eta$ to obtain
        \begin{align*}
            -\eta &\left ( |u_y|^2-|u_x|^2 \right ) _y = 2\text{Re}(\eta u_y \overline{u}_x)_x + i(\eta u \overline{u_y})_t - i \eta (\overline{u_t} u)_y + \eta \left(\dfrac{2\lambda}{p}|u|^p\right)_y \\
            &= 2\text{Re}(\eta u_y \overline{u}_x)_x + i(\eta u \overline{u_y})_t - i (\eta \overline{u_t} u)_y + i \eta_y \overline{u_t} u + \eta \left(\dfrac{2\lambda}{p}|u|^p\right)_y \\
            &= 2\text{Re}(\eta u_y \overline{u}_x)_x + i(\eta u \overline{u_y})_t - i (\eta \overline{u_t} u)_y + \eta \left(\dfrac{2\lambda}{p}|u|^p\right)_y + \eta_y u \left(\overline{u}_{xx} + \overline{u}_{yy} + \lambda \overline{u} |u|^{p-2} \right) \\
            &= 2\text{Re}(\eta u_y \overline{u}_x)_x + i(\eta u \overline{u_y})_t - i (\eta \overline{u_t} u)_y + \eta \left(\dfrac{2\lambda}{p}|u|^p\right)_y + \eta_y u \overline{u}_{xx} + \eta_y u \overline{u}_{yy} + \lambda \eta_y |u|^p \\
            &= 2\text{Re}(\eta u_y \overline{u}_x)_x + i(\eta u \overline{u_y})_t - i (\eta \overline{u_t} u)_y + \eta \left(\dfrac{2\lambda}{p}|u|^p\right)_y \\
            & \qquad + \eta_y \left(u \overline{u}_x\right)_x - \eta_y |u_x|^2 + \eta_y \left(u \overline{u_y}\right)_y - \eta_y |u_y|^2 + \lambda \eta_y |u|^p\, .
        \end{align*}
        To prove (\ref{5.5}), we replace $\eta$ in (\ref{5.4}) by $y-\frac{1}{2}$.
        \begin{align*}
            -\bigg[\left(y-\dfrac{1}{2}\right)&\left(|u_y|^2-|u_x|^2\right)\bigg]_y + \left(|u_y|^2-|u_x|^2 \right) = -\left(y-\dfrac{1}{2}\right) \left(|u_y|^2-|u_x|^2\right)_y \\
            &= 2\text{Re}\left[\left(y-\dfrac{1}{2}\right) u_y \overline{u}_x\right]_x + i\left[\left(y-\dfrac{1}{2}\right) u \overline{u_y}\right]_t - i \left[\left(y-\dfrac{1}{2}\right) \overline{u_t} u\right]_y \\
            & + \, \left(u \overline{u}_x\right)_x - |u_x|^2 + \left(u \overline{u_y}\right)_y - |u_y|^2 + \lambda |u|^p + \left(y-\dfrac{1}{2}\right) \left(\dfrac{2\lambda}{p}|u|^p\right)_y \\
            &= 2\text{Re}\left[\left(y-\dfrac{1}{2}\right) u_y \overline{u}_x\right]_x + i\left[\left(y-\dfrac{1}{2}\right) u \overline{u_y}\right]_t - i \left[\left(y-\dfrac{1}{2}\right) \overline{u_t} u\right]_y \\
            & + \, \left(u \overline{u}_x\right)_x - |u_x|^2 + \left(u \overline{u_y}\right)_y - |u_y|^2 + \lambda |u|^p + \left(\left(y-\dfrac{1}{2}\right) \dfrac{2\lambda}{p}|u|^p\right)_y - \dfrac{2\lambda}{p}|u|^p\, .
        \end{align*}
    \end{proof}

    Next, the following \emph{a-priori} estimate of the solution to (\ref{2.1}) in $H^1(\R \times [0,1])$ is derived.
    \begin{prop}\label{prop_5.2} 
        Suppose that either $p\ge 3$ and $\lambda<0$ or $p=3$ and $\lambda>0$. For any given $T>0$ and a solution $u$ of (\ref{2.1}) in $C_t \left([0,T]; \, H^1_{xy}(\R \times [0,1])\right)$, there is a $\psi: \, \R^+ \to \R^+$ as a nondecreasing function of the norms of $\varphi \in H^1(\R \times [0,1])$ and $h_1, h_2 \in H^1(\R \times [0,T])$ such that
        \begin{equation*}\label{NLSE_IB_2d_stp_glb_lh1h1_est}
            \sup_{t\in[0,T]} \|u(t)\|_{H^1(\R \times [0,1])} \le \psi\left (\|\varphi\|_{H^1(\R \times [0,1])}+\|h_1\|_{H^1(\R \times [0,T])} + \|h_2\|_{H^1(\R \times [0,T])}\right )\, .
        \end{equation*}
    \end{prop}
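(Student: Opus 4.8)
The plan is to run a non-homogeneous energy argument based entirely on the differential identities (\ref{5.1})--(\ref{5.5}). I first carry it out for a smooth solution, as the preceding Lemma permits; since the resulting bound is expressed only through $\|u(t)\|_{H^1(\R\times[0,1])}$ and the $H^1(\R\times[0,T])$-norms of $h_1,h_2$, it then extends to solutions merely in $C_t([0,T];H^1_{xy})$ by a standard approximation. Throughout I use that $H^1$ of a two-dimensional domain embeds in $L^q$ for every $q<\infty$, so all the $|u|^p$- and $|h_j|^p$-type quantities below are finite. Integrating (\ref{5.1}) over $\R\times(0,1)$ the $x$-flux drops by decay and the $y$-flux leaves traces at $y=0,1$; integrating in $t$ over $(0,\tau)$, $\tau\le T$, and inserting $u(x,0,t)=h_1$, $u(x,1,t)=h_2$ gives
\[
\|u(\tau)\|_{L^2_{xy}}^2=\|\varphi\|_{L^2_{xy}}^2-2\int_0^\tau\!\!\int_\R\im\big(\partial_yu(x,1,t)\,\overline{h_2}-\partial_yu(x,0,t)\,\overline{h_1}\big)\,dx\,dt .
\]
Applying the same procedure to (\ref{5.2}) gives an energy balance $E(\tau)=E(0)+(\text{a boundary flux bilinear in }\partial_th_j\text{ and the Neumann traces }\partial_yu(\cdot,j,\cdot))$, where $E(t):=\|u_x(t)\|_{L^2_{xy}}^2+\|u_y(t)\|_{L^2_{xy}}^2-\tfrac{2\lambda}{p}\|u(t)\|_{L^p_{xy}}^p$. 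By Cauchy--Schwarz and Young both flux terms are $\le\varepsilon\,Q(\tau)+C_\varepsilon(\|h_1\|_{H^1(\R\times[0,T])}^2+\|h_2\|_{H^1(\R\times[0,T])}^2)$, where $Q(\tau):=\sum_{j=0,1}\|\partial_yu(\cdot,j,\cdot)\|_{L^2(\R\times(0,\tau))}^2$; the difficulty is that $Q$ is not controlled by the Dirichlet data.

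The crucial step is therefore to estimate $Q(T)$ itself, and this is exactly what the Rellich-type identity (\ref{5.5}) (namely (\ref{5.4}) with $\eta=y-\tfrac12$) is designed for. Integrating (\ref{5.5}) in $y$ over $(0,1)$ isolates the term $2\int_0^1|u_y|^2\,dy$, turns the $\partial_x$-divergences into nothing once integrated in $x$, and the $\partial_t$-term into endpoint values at $t=0,\tau$; the surviving boundary-in-$y$ contributions are precisely $\tfrac12\int_\R(|\partial_yu(x,0,t)|^2+|\partial_yu(x,1,t)|^2)\,dx$, the harmless $\tfrac12\int_\R(|h_{1,x}|^2+|h_{2,x}|^2)\,dx$, cross terms of the form $h_j\,\overline{\partial_yu}$ and $\overline{\partial_th_j}\,h_j$ at $y=0,1$, and the boundary potential term $\tfrac{\lambda}{p}(|h_1|^p+|h_2|^p)$. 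Solving for the Neumann trace, using Young to absorb a small multiple of $Q(\tau)$, and bounding the $t=\tau$ endpoint term by $\tfrac12\|u(\tau)\|_{L^2_{xy}}\|u_y(\tau)\|_{L^2_{xy}}$ yields
\[
Q(\tau)\ \lesssim\ \int_0^\tau\|u_y(t)\|_{L^2_{xy}}^2\,dt\ +\ \sup_{t\le\tau}\|u(t)\|_{L^2_{xy}}\|u_y(t)\|_{L^2_{xy}}\ +\ \|u\|_{L^p(\R\times(0,1)\times(0,\tau))}^p\ +\ D^2+D^p ,
\]
where $D:=\|\varphi\|_{H^1(\R\times[0,1])}+\|h_1\|_{H^1(\R\times[0,T])}+\|h_2\|_{H^1(\R\times[0,T])}$ absorbs all data-only terms (the two-dimensional embeddings handle the $|h_j|^p$-pieces).

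It remains to close the loop. In the defocusing case $\lambda<0$ one has $\|\nabla u(t)\|_{L^2_{xy}}^2\le E(t)$ and $\tfrac{2|\lambda|}{p}\|u(t)\|_{L^p_{xy}}^p\le E(t)$, so the displays above reduce, after integrating the energy balance in $t$ to control $\int_0^T\|u(t)\|_{L^p}^p\,dt$, to two inequalities of the schematic form $N\le C_0(D^2+D^p)+\varepsilon\,Q$ and $Q\le C_T\,N+\varepsilon'C_T\,Q+C_T(D^2+D^p)$, with $N:=\sup_{\tau\le T}\|u(\tau)\|_{H^1_{xy}}^2$ and $Q:=Q(T)$ (using $\sup_t\|u\|\|u_y\|\le N$ and $\int_0^T\|u_y\|^2\le TN$). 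Choosing $\varepsilon,\varepsilon'$ small in a way that may depend on $T$, I first absorb $\varepsilon'C_TQ$, then eliminate $Q$, and obtain $N\le\psi(D)$ with $\psi$ nondecreasing, which is the assertion. In the focusing case $p=3$, $\lambda>0$, the only change is that $\|\nabla u(t)\|_{L^2}^2$ is no longer free from $E(t)$; here $p=3$ is $L^2$-subcritical in two space dimensions, so Gagliardo--Nirenberg on the strip gives $\|u\|_{L^3}^3\lesssim\|u\|_{L^2}^2\|\nabla u\|_{L^2}\le\delta\|\nabla u\|_{L^2}^2+C_\delta\|u\|_{L^2}^4$, whence $\|\nabla u(t)\|_{L^2}^2\le 2E(t)+C\|u(t)\|_{L^2}^4$ for small $\delta$; since $\|u(t)\|_{L^2}$ is already controlled by the mass balance, the same absorption scheme goes through.

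The main obstacle is the second step: the mass and energy balances on their own carry boundary fluxes built from the Neumann traces $\partial_yu(\cdot,j,\cdot)$, quantities the Dirichlet data do not govern, and it is precisely the multiplier $(y-\tfrac12)\partial_y$ encoded in (\ref{5.5}) that buys back an $L^2_{x,t}$-bound on these traces at the cost of a small fraction of the interior $H^1$-energy. Carrying out the bookkeeping of the boundary-in-$y$ and boundary-in-$t$ terms in (\ref{5.5}) carefully, and then verifying that the coupled system of inequalities genuinely closes once the absorption parameters are chosen small enough (with constants allowed to grow with $T$, which is acceptable here since $\psi$ may depend on $T$), is the technical heart of the proof; the focusing case is a minor side issue dispatched by $L^2$-subcriticality.
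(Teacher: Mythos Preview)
Your overall plan is correct and matches the paper's: integrate the mass identity (\ref{5.1}), the energy identity (\ref{5.2}), and the Rellich multiplier identity (\ref{5.5}); the latter buys an $L^2_{x,t}$ bound on the Neumann traces $\partial_y u(\cdot,0,\cdot)$, $\partial_y u(\cdot,1,\cdot)$ in exchange for interior $H^1$ quantities, and then one closes a coupled system. In the defocusing case your absorption scheme is essentially equivalent to what the paper does (the paper organises the bookkeeping so as to land on a Gronwall inequality in the quantity $\|u_x(t)\|_{L^2}^2+\|u_y(t)\|_{L^2}^2-\tfrac{8\lambda}{3p}\|u(t)\|_{L^p}^p$, but your direct $\varepsilon$-absorption with $\varepsilon$ allowed to depend on $T$ achieves the same thing).

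The gap is in the focusing case $p=3$, $\lambda>0$. You write ``since $\|u(t)\|_{L^2}$ is already controlled by the mass balance, the same absorption scheme goes through.'' But in the non-homogeneous problem the mass balance does \emph{not} control $\|u(t)\|_{L^2}$ by data alone: its boundary flux is bilinear in $h_j$ and the Neumann traces, so $\|u(t)\|_{L^2}^2$ is itself coupled to the unknown quantity $Q$. Once you feed this into Gagliardo--Nirenberg you pick up a term of size $\|u(t)\|_{L^2}^4$, hence a contribution of order $Q^2$ (or $Q\cdot\|h\|_{L^2}^2$ if you use Cauchy--Schwarz rather than Young on the flux). Either way the coupled system becomes genuinely nonlinear in $(N,Q)$, and a small-$\varepsilon$ linear absorption no longer closes. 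The paper handles this by keeping the multiplicative structure in the mass-balance flux, arriving at an inequality of the form
\[
\big(1-K(t)\big)\big(\|u_x(t)\|_{L^2}^2+\|u_y(t)\|_{L^2}^2\big)\le c\int_0^t\big(\|u_x\|_{L^2}^2+\|u_y\|_{L^2}^2\big)\,d\tau+C(t)+C,
\]
with $K(t)=c\max_j\|h_j\|_{L^2(\R\times(0,t))}$, and then \emph{partitions} $[0,T]$ into finitely many subintervals on each of which $K<\tfrac12$, so that Gronwall applies piecewise. Your sketch omits this partitioning step, and without it the focusing argument does not close.
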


    \begin{proof}
        We first integrate (\ref{5.5}) with respect to $x$, $y$ and $t$ to obtain
        \begin{align*}
             \dfrac{1}{2} \int_{-\infty}^{\infty} &\int_0^t \left\{\left(|u_y(x,1,\tau)|^2 + |u_y(x,0,\tau)|^2 \right) - \left(|u_x(x,1,\tau)|^2 + |u_x(x,0,\tau)|^2 \right)\right\} \, d\tau \, dx \\
            & = 2 \int_{-\infty}^{\infty} \int_0^1 \int_0^t |u_y|^2 \, d\tau \, dy \, dx - i \int_{-\infty}^{\infty} \int_0^1 \left(y-\dfrac{1}{2}\right) u(t) \overline{u_y}(t) \, dy \, dx \\
            & \quad + i \int_{-\infty}^{\infty} \int_0^1 \left(y-\dfrac{1}{2}\right) \varphi \overline{\varphi_y} \, dy \, dx + \dfrac{i}{2} \int_{-\infty}^{\infty} \int_0^t \left(\left(\overline{h_2}\right)_t h_2 + \left(\overline{h_1}\right)_t h_1\right) \, d\tau \, dx \\
            & \quad - \int_{-\infty}^{\infty} \int_0^t  h_2 \overline{u_y}(x,1,\tau) \, d\tau \, dx + \int_{-\infty}^{\infty} \int_0^t  h_1 \overline{u_y}(x,0,\tau) \, d\tau \, dx - \dfrac{\lambda}{p} \int_{-\infty}^{\infty} \int_0^t |h_2|^p \, d\tau \, dx \\
            & \quad - \dfrac{\lambda}{p} \int_{-\infty}^{\infty} \int_0^t |h_1|^p \, d\tau \, dx - \lambda \left(1-\dfrac{2}{p}\right) \int_{-\infty}^{\infty} \int_0^1 \int_0^t |u|^p \, d\tau \, dy \, dx \, ,
        \end{align*}
        which yields
        \begin{align*}
             \int_{-\infty}^{\infty} &\int_0^t \left(|u_y(x,1,\tau)|^2 + |u_y(x,0,\tau)|^2 \right) \, d\tau \, dx \\
            & = 4 \|u_y\|_{L^2_{xyT}}^2 - 2i \int_{-\infty}^{\infty} \int_0^1 \left(y-\dfrac{1}{2}\right) u(t) \overline{u_y}(t) \, dy \, dx + 2i \int_{-\infty}^{\infty} \int_0^1 \left(y-\dfrac{1}{2}\right) \varphi \overline{\varphi_y} \, dy \, dx \\
            & \quad + i \int_{-\infty}^{\infty} \int_0^t \left(\left(\overline{h_2}\right)_t h_2 + \left(\overline{h_1}\right)_t h_1\right) \, d\tau \, dx - 2 \int_{-\infty}^{\infty} \int_0^t  h_2 \overline{u_y}(x,1,\tau) \, d\tau \, dx \\
            & \quad + 2 \int_{-\infty}^{\infty} \int_0^t  h_1 \overline{u_y}(x,0,\tau) \, d\tau \, dx - \dfrac{2\lambda}{p} \|h_2\|_{L^p_{xT}}^p - \dfrac{2\lambda}{p} \|h_1\|_{L^p_{xT}}^p - 2 \lambda \left(1-\dfrac{2}{p}\right) \|u\|_{L^p_{xyT}}^p \\
            & \quad + \|(h_1)_x\|_{L^2_{xT}}^2 + \|(h_2)_x\|_{L^2_{xT}}^2\, ,
        \end{align*}
        where $L^p_{xT} $ or $L^p_{xyT} $ stands for the norm with $t$-integral from $0$ to $T$.
        Consider the first case $\lambda<0$ and recall the inequality $ab \le (1/q) {(\vartheta a)^q} + (1/q') {(b /{\vartheta} )^{q'}}$ for $\vartheta$, $a$, $b\ge0$,
        \begin{align*}
            & \int_{-\infty}^{\infty} \int_0^t \left(|u_y(x,1,\tau)|^2 + |u_y(x,0,\tau)|^2 \right) \, d\tau \, dx \\
            &\le 4 \|u_y\|_{L^2_{xyT}}^2 + \|u(t)\|_{L^2_{xy}} \|u_y(t)\|_{L^2_{xy}} + \|\varphi\|_{L^2_{xy}} \|\varphi_y\|_{L^2_{xy}} + \|(h_1)_x\|_{L^2_{xT}}^2 + \|(h_2)_x\|_{L^2_{xT}}^2 \\
            & \quad + \left\|(h_1)_t\right\|_{L^2_{xT}} \|h_1\|_{L^2_{xT}} + \left\|(h_2)_t\right\|_{L^2_{xT}} \|h_2\|_{L^2_{xT}} - \dfrac{2\lambda}{p} \|h_2\|_{L^p_{xT}}^p - \dfrac{2\lambda}{p} \|h_1\|_{L^p_{xT}}^p - 2 \lambda \left(1-\dfrac{2}{p}\right) \|u\|_{L^p_{xyT}}^p \\
            & \qquad + 2 \|h_1\|_{L^2_{xT}} \left(\int_{-\infty}^{\infty} \int_0^t |u_y(x,0,\tau)|^2 \, d\tau \, dx \right)^{\frac{1}{2}} + 2 \|h_2\|_{L^2_{xT}} \left(\int_{-\infty}^{\infty} \int_0^t |u_y(x,1,\tau)|^2 \, d\tau \, dx \right)^{\frac{1}{2}} \\
            &\le 4 \|u_y\|_{L^2_{xyT}}^2 + \|u(t)\|_{L^2_{xy}} \|u_y(t)\|_{L^2_{xy}} + \|\varphi\|_{L^2_{xy}} \|\varphi_y\|_{L^2_{xy}} + \|(h_1)_x\|_{L^2_{xT}}^2 + \|(h_2)_x\|_{L^2_{xT}}^2 \\
            & \quad + \left\|(h_1)_t\right\|_{L^2_{xT}} \|h_1\|_{L^2_{xT}} + \left\|(h_2)_t\right\|_{L^2_{xT}} \|h_2\|_{L^2_{xT}} - \dfrac{2\lambda}{p} \|h_2\|_{L^p_{xT}}^p - \dfrac{2\lambda}{p} \|h_1\|_{L^p_{xT}}^p - 2 \lambda \left(1-\dfrac{2}{p}\right) \|u\|_{L^p_{xyT}}^p \\
            & \qquad + 2 \|h_1\|_{L^2_{xT}}^2 + 2 \|h_2\|_{L^2_{xT}}^2 + \dfrac{1}{2} \int_{-\infty}^{\infty} \int_0^t \left(|u_y(x,1,\tau)|^2 + |u_y(x,0,\tau)|^2 \right) \, d\tau \, dx\, ,
        \end{align*}
        which provides the $L^2$-norm of $u_y$ with respect to $x$, $t$ on the boundary $y=0$ and $y=1$:
        \begin{align}\label{5.6} 
            \int_{-\infty}^{\infty} & \int_0^t \left(|u_y(x,1,\tau)|^2 + |u_y(x,0,\tau)|^2 \right) \, d\tau \, dx \nonumber \\
            &\le 8 \|u_y\|_{L^2_{xyT}}^2 + 2 \|u(t)\|_{L^2_{xy}} \|u_y(t)\|_{L^2_{xy}} - 4 \lambda \left(1-\dfrac{2}{p}\right) \|u\|_{L^p_{xyT}}^p \nonumber \\
            & \quad + 2 \left\|(h_1)_t\right\|_{L^2_{xT}} \|h_1\|_{L^2_{xT}} + 2 \left\|(h_2)_t\right\|_{L^2_{xT}} \|h_2\|_{L^2_{xT}} - \dfrac{4\lambda}{p} \|h_2\|_{L^p_{xT}}^p - \dfrac{4\lambda}{p} \|h_1\|_{L^p_{xT}}^p \nonumber \\
            & \qquad + 4 \|h_1\|_{L^2_{xT}}^2 + 4 \|h_2\|_{L^2_{xT}}^2 + 2 \|(h_1)_x\|_{L^2_{xT}}^2 + 2 \|(h_2)_x\|_{L^2_{xT}}^2 + 2 \|\varphi\|_{L^2_{xy}} \|\varphi_y\|_{L^2_{xy}} \, .
        \end{align}
        For each $t\in [0,T]$, (\ref{5.1}) yields
        \begin{align*}
            \| & u(t)\|_{L^2_{xy}}^2 = \int_{-\infty}^{\infty} \int_0^1 |u(x,y,t)|^2 \, dy \, dx \\
            &= \int_{-\infty}^{\infty} \int_0^1 |u(x,y,0)|^2 \, dy \, dx + \int_{-\infty}^{\infty} \int_0^1 \int_0^t \left(|u(x,y,\tau)|^2\right)_t \, d\tau \, dy \, dx \\
            &= \|\varphi\|_{L^2_{xy}}^2 - 2 \text{Im} \int_{-\infty}^{\infty} \int_0^1 \int_0^t \left[(u_x(x,y,\tau) \overline{u}(x,y,\tau))_x + (u_y(x,y,\tau) \overline{u}(x,y,\tau))_y\right] \, d\tau \, dy \, dx \\
            &= \|\varphi\|_{L^2_{xy}}^2 + 2 \text{Im} \int_{-\infty}^{\infty} \int_0^t  u_y(x,0,\tau) \overline{u}(x,0,\tau) \, d\tau \, dx - 2 \text{Im} \int_{-\infty}^{\infty} \int_0^t u_y(x,1,\tau) \overline{u}(x,1,\tau)  \, d\tau \, dx \\
            &\le \int_{-\infty}^{\infty} \int_0^t \left(|u_y(x,1,\tau)|^2 + |u_y(x,0,\tau)|^2 \right) \, d\tau \, dx + \|h_1\|_{L^2_{xT}}^2 + \|h_2\|_{L^2_{xT}}^2 + \|\varphi\|_{L^2_{xy}}^2\, .
        \end{align*}
        Use (\ref{5.6}) to replace $\int_{-\infty}^{\infty} \int_0^t \left(|u_y(x,1,\tau)|^2 + |u_y(x,0,\tau)|^2 \right) d\tau dx$ in the inequality above,
        \begin{align*}
            \|u(t)\|_{L^2_{xy}}^2 &\le 8 \|u_y\|_{L^2_{xyT}}^2 + 2 \|u(t)\|_{L^2_{xy}} \|u_y(t)\|_{L^2_{xy}} - 4 \lambda \left(1-\dfrac{2}{p}\right) \|u\|_{L^p_{xyT}}^p \\
            & \quad + \, 2 \left\|(h_1)_t\right\|_{L^2_{xT}} \|h_1\|_{L^2_{xT}} + 2 \left\|(h_2)_t\right\|_{L^2_{xT}} \|h_2\|_{L^2_{xT}} - \dfrac{4\lambda}{p} \|h_2\|_{L^p_{xT}}^p - \dfrac{4\lambda}{p} \|h_1\|_{L^p_{xT}}^p \\
            & \qquad + \, 4 \|h_1\|_{L^2_{xT}}^2 + 4 \|h_2\|_{L^2_{xT}}^2 + 2 \|(h_1)_x\|_{L^2_{xT}}^2 + 2 \|(h_2)_x\|_{L^2_{xT}}^2 + 2 \|\varphi\|_{L^2_{xy}} \|\varphi_y\|_{L^2_{xy}} \\
            & \qquad \quad + \, \|h_1\|_{L^2_{xT}}^2 + \|h_2\|_{L^2_{xT}}^2 + \|\varphi\|_{L^2_{xy}}^2 \\
            \le & 8 \|u_y\|_{L^2_{xyT}}^2 + \left(\dfrac{1}{2} \|u(t)\|_{L^2_{xy}}^2 + 2 \|u_y(t)\|^2_{L^2_{xy}} \right) - 4 \lambda \left(1-\dfrac{2}{p}\right) \|u\|_{L^p_{xyT}}^p \\
            & \ + \, 2 \left\|(h_1)_t\right\|_{L^2_{xT}} \|h_1\|_{L^2_{xT}} + 2 \left\|(h_2)_t\right\|_{L^2_{xT}} \|h_2\|_{L^2_{xT}} - \dfrac{4\lambda}{p} \|h_2\|_{L^p_{xT}}^p - \dfrac{4\lambda}{p} \|h_1\|_{L^p_{xT}}^p \\
            & \quad + \, 5 \|h_1\|_{L^2_{xT}}^2 + 5 \|h_2\|_{L^2_{xT}}^2 + 2 \|(h_1)_x\|_{L^2_{xT}}^2 + 2 \|(h_2)_x\|_{L^2_{xT}}^2 + 2 \|\varphi\|_{L^2_{xy}} \|\varphi_y\|_{L^2_{xy}} + \|\varphi\|_{L^2_{xy}}^2\, .
        \end{align*}
        After combining the similar terms, we obtain
        \begin{align}\label{5.7} 
            \|u(t)\|_{L^2_{xy}}^2 \le & 16 \|u_y\|_{L^2_{xyT}}^2 + 4 \|u_y(t)\|^2_{L^2_{xy}} - 8 \lambda \left(1-\dfrac{2}{p}\right) \|u\|_{L^p_{xyT}}^p \nonumber \\
            & \quad + \, 4 \left\|(h_1)_t\right\|_{L^2_{xT}} \|h_1\|_{L^2_{xT}} + 4 \left\|(h_2)_t\right\|_{L^2_{xT}} \|h_2\|_{L^2_{xT}} - \dfrac{8\lambda}{p} \|h_2\|_{L^p_{xT}}^p - \dfrac{8\lambda}{p} \|h_1\|_{L^p_{xT}}^p \nonumber \\
            &\qquad  + \, 10 \|h_1\|_{L^2_{xT}}^2 + 10 \|h_2\|_{L^2_{xT}}^2 + 4 \|(h_1)_x\|_{L^2_{xT}}^2 + 2 \|(h_2)_x\|_{L^2_{xT}}^2 \nonumber \\
            & \qquad \quad + \, 4 \|\varphi\|_{L^2_{xy}} \|\varphi_y\|_{L^2_{xy}} + 2 \|\varphi\|_{L^2_{xy}}^2\, .
        \end{align}
        Applying the strategy used in \cite{Ran_NLSE_2d_uhp}, we integrate the identity (\ref{5.2}) with respect to $x$, $y$ and $t$ and use (\ref{5.6}) to deduce
        \begin{align*}
            \|u_x(t)& \|_{L^2_{xy}}^2 + \|u_y(t)\|^2_{L^2_{xy}} - \dfrac{2\lambda}{p} \|u(t)\|^p_{L^p_{xy}} \\
            &= \|\varphi_x\|_{L^2_{xy}}^2 + \|\varphi_y\|_{L^2_{xy}}^2 - \dfrac{2\lambda}{p} \|\varphi\|_{L^p_{xy}}^p \\
            &\quad  - 2 \text{Re} \int_{-\infty}^{\infty} \int_0^t \left(\overline{h_1}\right)_t u_y(x,0,\tau) \, d\tau \, dx + 2 \text{Re} \int_{-\infty}^{\infty} \int_0^t \left(\overline{h_2}\right)_t u_y(x,1,\tau) \, d\tau \, dx \\
            &\le \dfrac{1}{128} \int_{-\infty}^{\infty} \int_0^t \left(|u_y(x,0,\tau)|^2 + |u_y(x,1,\tau)|^2 \right) \, d\tau \, dx + 128 \left\|(h_1)_t\right\|^2_{L^2_{xT}} + 128 \left\|(h_2)_t\right\|^2_{L^2_{xT}} \\
            & \quad + \|\varphi_x\|_{L^2_{xy}}^2 + \|\varphi_y\|_{L^2_{xy}}^2 - \dfrac{2\lambda}{p} \|\varphi\|_{L^p_{xy}}^p \\
            &\le \dfrac{1}{16} \|u_y\|_{L^2_{xyT}}^2 + \dfrac{1}{64} \|u(t)\|_{L^2_{xy}} \|u_y(t)\|_{L^2_{xy}} - \dfrac{\lambda}{32} \left(1-\dfrac{2}{p}\right) \|u\|_{L^p_{xyT}}^p \\
            & \quad + c \left(\left\|(h_1)_t\right\|_{L^2_{xT}} \|h_1\|_{L^2_{xT}} + \left\|(h_2)_t\right\|_{L^2_{xT}} \|h_2\|_{L^2_{xT}} - \dfrac{\lambda}{4p} \|h_2\|_{L^p_{xT}}^p - \dfrac{\lambda}{4p} \|h_1\|_{L^p_{xT}}^p \right. \nonumber \\
            & \qquad + \|h_1\|_{L^2_{xT}}^2 + \|h_2\|_{L^2_{xT}}^2 + \|(h_1)_x\|_{L^2_{xT}}^2 + \|(h_2)_x\|_{L^2_{xT}}^2 + \|\varphi\|_{L^2_{xy}} \|\varphi_y\|_{L^2_{xy}} \\
            & \quad\qquad  \left. + \left\|(h_1)_t\right\|^2_{L^2_{xT}} + \left\|(h_2)_t\right\|^2_{L^2_{xT}} + \|\varphi_x\|_{L^2_{xy}}^2 + \|\varphi_y\|_{L^2_{xy}}^2 - \dfrac{2\lambda}{p} \|\varphi\|_{L^p_{xy}}^p \right)
        \end{align*}
        for some $c>0$.
        Denote $C=C(\|\varphi\|_{H^1_{xy}})$ and $C(t)=C(\|h_j\|_{H^1_{xT}})$ as functions depending upon $\|\varphi\|_{H^1_{xy}}$ and $\|h_j\|_{H^1_{xT}}$ ($j=1$, $2$), respectively; in particular, $C=0$ if $\|\varphi\|_{H^1_{xy}}=0$ and $C(t)=0$ if $\|h_j\|_{H^1_{xT}}=0$.
        Then, (\ref{5.7}) yields
        \begin{align*}
            \|u_x(t) &\|_{L^2_{xy}}^2 + \|u_y(t)\|^2_{L^2_{xy}} - \dfrac{2\lambda}{p} \|u(t)\|^p_{L^p_{xy}} \\
            &\le \dfrac{1}{16} \|u_y\|_{L^2_{xyT}}^2 + \dfrac{1}{64} \|u(t)\|_{L^2_{xy}} \|u_y(t)\|_{L^2_{xy}} - \dfrac{\lambda}{32} \left(1-\dfrac{2}{p}\right) \|u\|_{L^p_{xyT}}^p + C(t) + C \\
            &\le \dfrac{1}{16} \|u_y\|_{L^2_{xyT}}^2 - \dfrac{\lambda}{32} \left(1-\dfrac{2}{p}\right) \|u\|_{L^p_{xyT}}^p + C(t) + C \\
            & \quad + \dfrac{1}{64} \|u_y(t)\|_{L^2_{xy}} \left\{16 \|u_y\|_{L^2_{xyT}}^2 + 4 \|u_y(t)\|^2_{L^2_{xy}} - 8 \lambda \left(1-\dfrac{2}{p}\right) \|u\|_{L^p_{xyT}}^p + C(t) + C \right\}^{\frac{1}{2}} \\
            &\le \dfrac{1}{16} \|u_y\|_{L^2_{xyT}}^2 - \dfrac{\lambda}{32} \left(1-\dfrac{2}{p}\right) \|u\|_{L^p_{xyT}}^p + C(t) + C \\
            & \quad + \dfrac{1}{64} \|u_y(t)\|_{L^2_{xy}} \left\{4 \|u_y\|_{L^2_{xyT}} + 2 \|u_y(t)\|_{L^2_{xy}} + \sqrt{8 |\lambda| \left(1-\dfrac{2}{p}\right)} \|u\|_{L^p_{xyT}}^{\frac{p}{2}} + C(t) + C \right\} \\
            &= \dfrac{1}{16} \|u_y\|_{L^2_{xyT}}^2 - \dfrac{\lambda}{32} \left(1-\dfrac{2}{p}\right) \|u\|_{L^p_{xyT}}^p + \dfrac{1}{16} \|u_y(t)\|_{L^2_{xy}} \|u_y\|_{L^2_{xyT}} + C(t) + C \\
            & \quad + \dfrac{1}{32} \|u_y(t)\|^2_{L^2_{xy}} + \sqrt{\dfrac{|\lambda|}{8^3} \left(1-\dfrac{2}{p}\right)} \|u_y(t)\|_{L^2_{xy}} \|u\|_{L^p_{xyT}}^{\frac{p}{2}} + \|u_y(t)\|_{L^2_{xy}} (C(t) + C) \\
            &\le \dfrac{1}{16} \|u_y\|_{L^2_{xyT}}^2 - \dfrac{\lambda}{32} \left(1-\dfrac{2}{p}\right) \|u\|_{L^p_{xyT}}^p + \dfrac{1}{32} \|u_y(t)\|_{L^2_{xy}}^2 + \dfrac{1}{32} \|u_y\|_{L^2_{xyT}}^2 + C(t) + C \\
            & \quad + \dfrac{1}{32} \|u_y(t)\|^2_{L^2_{xy}} + \dfrac{1}{64} \|u_y(t)\|_{L^2_{xy}}^2 + \dfrac{|\lambda|}{32} \left(1-\dfrac{2}{p}\right) \|u\|_{L^p_{xyT}}^p + \dfrac{1}{32} \|u_y(t)\|_{L^2_{xy}}^2 \\
            &\le \dfrac{1}{8} \|u_y(t)\|_{L^2_{xy}}^2 + \dfrac{1}{8} \|u_y\|_{L^2_{xyT}}^2 - \dfrac{\lambda}{16} \left(1-\dfrac{2}{p}\right) \|u\|_{L^p_{xyT}}^p + C(t) + C
        \end{align*}
        or
        \begin{equation*}
            \|u_x(t)\|_{L^2_{xy}}^2 + \|u_y(t)\|^2_{L^2_{xy}} - \dfrac{8\lambda}{3p} \|u(t)\|^p_{L^p_{xy}} \le \dfrac{1}{6} \|u_y\|_{L^2_{xyT}}^2 - \dfrac{\lambda}{12} \left(1-\dfrac{2}{p}\right) \|u\|_{L^p_{xyT}}^p + C(t) + C \, .
        \end{equation*}
        Moreover, since $\lambda<0$, we have
        \begin{align*}
            & \|u_x(t)\|_{L^2_{xy}}^2 + \|u_y(t)\|^2_{L^2_{xy}} - \dfrac{8\lambda}{3p} \|u(t)\|^p_{L^p_{xy}} \le \dfrac{1}{6} \left(\|u_x\|_{L^2_{xyT}}^2 + \|u_y\|_{L^2_{xyT}}^2 - \dfrac{8\lambda}{3p} \|u\|_{L^p_{xyT}}^p \right) + C(t) + C \\
            &= \dfrac{1}{6} \int_0^t \left(\|u_x(\tau)\|_{L^2_{xy}}^2 + \|u_y(\tau)\|_{L^2_{xy}}^2 - \dfrac{8\lambda}{3p} \|u(\tau)\|_{L^p_{xy}}^p \right) \, d\tau + C(t) + C\, .
        \end{align*}
        By the Gronwall's inequality, we obtain
        \begin{align*}
            & \|u_x(t)\|_{L^2_{xy}}^2 + \|u_y(t)\|^2_{L^2_{xy}} - \dfrac{8\lambda}{3p} \|u(t)\|^p_{L^p_{xy}} \\
            &\qquad \le \left(C(t) + C\right) \cdot \exp\left(\int_0^t \frac{1}{6} \, d\tau \right) = \left(C(t) + C\right) \exp\left(\frac{t}{6} \right) := \psi_1
        \end{align*}
        where $\psi_1$ is an increasing function of $\|\varphi\|_{H^1_{xy}}$ and $\|h_j\|_{H^1_{xT}}, j= 1,2,$ and $\psi_1=0$ if and only if $\|\varphi\|_{H^1_{xy}},$  $\|h_j\|_{H^1_{xT}}, j =1,2 $ are zero. Thus
        \begin{equation*}
            \|u_x(t)\|_{L^2_{xy}}^2 + \|u_y(t)\|^2_{L^2_{xy}} \le \psi_1\, .
        \end{equation*}
        Note that for all the terms with $\|\cdot\|_{L^r}$, one can bound them with $H^1$-norms according to the Sobolev embedding theorem for a domain of dimension $2$. Thus, it is clear that $\|u(t)\|_{H^1_{xy}}$ is uniformly bounded for any given $T$ if $\lambda<0$.

        For $\lambda>0$, analogous to the previous argument, (\ref{5.7}) implies
        \begin{align*}
            \| &  u_x(t)\|_{L^2_{xy}}^2 + \|u_y(t)\|^2_{L^2_{xy}} - \dfrac{2\lambda}{p} \|u(t)\|^p_{L^p_{xy}} \\
            &\le \dfrac{1}{16} \|u_y\|_{L^2_{xyT}}^2 + \dfrac{1}{64} \|u(t)\|_{L^2_{xy}} \|u_y(t)\|_{L^2_{xy}} - \dfrac{\lambda}{32} \left(1-\dfrac{2}{p}\right) \|u\|_{L^p_{xyT}}^p + C(t) + C \\
            &\le \dfrac{1}{16} \|u_y\|_{L^2_{xyT}}^2 + \dfrac{1}{64} \|u_y(t)\|_{L^2_{xy}} \left\{16 \|u_y\|_{L^2_{xyT}}^2 + 4 \|u_y(t)\|^2_{L^2_{xy}} + C(t) + C \right\}^{\frac{1}{2}} + C(t) + C \\
            &\le \dfrac{1}{16} \|u_y\|_{L^2_{xyT}}^2 + \dfrac{1}{64} \|u_y(t)\|_{L^2_{xy}} \left\{4 \|u_y\|_{L^2_{xyT}} + 2 \|u_y(t)\|_{L^2_{xy}} + C(t) + C \right\} + C(t) + C \\
            &= \dfrac{1}{16} \|u_y\|_{L^2_{xyT}}^2 + \dfrac{1}{16} \|u_y(t)\|_{L^2_{xy}} \|u_y\|_{L^2_{xyT}} + \dfrac{1}{32} \|u_y(t)\|^2_{L^2_{xy}} + \|u_y(t)\|_{L^2_{xy}} (C(t) + C) + C(t) + C \\
            &\le \dfrac{1}{16} \|u_y\|_{L^2_{xyT}}^2 + \dfrac{1}{32} \|u_y(t)\|_{L^2_{xy}}^2 + \dfrac{1}{32} \|u_y\|_{L^2_{xyT}}^2 + \dfrac{1}{32} \|u_y(t)\|^2_{L^2_{xy}} + \dfrac{1}{32} \|u_y(t)\|_{L^2_{xy}}^2 + C(t) + C \\
            &\le \dfrac{1}{8} \|u_y(t)\|_{L^2_{xy}}^2 + \dfrac{1}{8} \|u_y\|_{L^2_{xyT}}^2 + C(t) + C\, .
        \end{align*}
        By Gagliardo-Nirenberg inequality and H\"older's inequality,
        \begin{align*}
             \|u_x&(t)\|_{L^2_{xy}}^2 + \|u_y(t)\|^2_{L^2_{xy}} \\
            &\le \dfrac{1}{8} \|u_y(t)\|_{L^2_{xy}}^2 + \dfrac{1}{8} \|u_y\|_{L^2_{xyT}}^2 + \dfrac{2\lambda}{p} \|u(t)\|^p_{L^p_{xy}} + C(t) + C \\
            &\le \dfrac{1}{8} \|u_y(t)\|_{L^2_{xy}}^2 + \dfrac{1}{8} \|u_y\|_{L^2_{xyT}}^2 + \dfrac{2\lambda}{p} \left(\|u_x(t)\|_{L^2_{xy}} + \|u_y(t)\|_{L^2_{xy}}\right)^{p-2} \cdot \|u(t)\|_{L^2_{xy}}^2 + C(t) + C\, .
        \end{align*}
        Also, (\ref{5.1}) gives
        \begin{align*}
             \|u&(t)\|_{L^2_{xy}}^2 = \int_{-\infty}^{\infty} \int_0^1 |u(x,y,t)|^2 \, dy \, dx \\
            &= \int_{-\infty}^{\infty} \int_0^1 |u(x,y,0)|^2 \, dy \, dx + \int_{-\infty}^{\infty} \int_0^1 \int_0^t \left(|u(x,y,\tau)|^2\right)_t \, d\tau \, dy \, dx \\
            &= \|\varphi\|_{L^2_{xy}}^2 - 2 \text{Im} \int_{-\infty}^{\infty} \int_0^1 \int_0^t \left[(u_x(x,y,\tau) \overline{u}(x,y,\tau))_x + (u_y(x,y,\tau) \overline{u}(x,y,\tau))_y\right] \, d\tau \, dy \, dx \\
            &= \|\varphi\|_{L^2_{xy}}^2 + 2 \text{Im} \int_{-\infty}^{\infty} \int_0^t  u_y(x,0,\tau) \overline{u}(x,0,\tau) \, d\tau \, dx - 2 \text{Im} \int_{-\infty}^{\infty} \int_0^t u_y(x,1,\tau) \overline{u}(x,1,\tau)  \, d\tau \, dx \\
            &\le \left(\int_{-\infty}^{\infty} \int_0^t \left(|u_y(x,1,\tau)|^2 + |u_y(x,0,\tau)|^2 \right) \, d\tau \, dx\right)^{\frac{1}{2}} \cdot K(t) + C\, ,
        \end{align*}
        where $K(t) = c \max \{\|h_1\|_{L^2_{xT}}, \|h_2\|_{L^2_{xT}}\}$ with a possible constant $c>0$. Then, plug (\ref{5.6}) into the inequality above to obtain
        \begin{align*}
             \|u(t)\|_{L^2_{xy}}^2 & \le \|u_y\|_{L^2_{xyT}} \cdot K(t) + \|u(t)\|_{L^2_{xy}}^{\frac{1}{2}} \|u_y(t)\|_{L^2_{xy}}^{\frac{1}{2}} \cdot K(t) + C(t) + C \\
            &\le \|u_y\|_{L^2_{xyT}} \cdot K(t) + \dfrac{1}{4} \|u(t)\|_{L^2_{xy}}^2 + \dfrac{3}{4} \|u_y(t)\|_{L^2_{xy}}^{\frac{2}{3}} \cdot K(t)^{\frac{4}{3}} + C(t) + C,
        \end{align*}
        i.e.,
        \begin{equation*}\label{NLSE_IB_2d_stp_glb_ll2_K_est}
            \|u(t)\|_{L^2_{xy}}^2 \le \|u_y\|_{L^2_{xyT}} \cdot K(t) + \|u_y(t)\|_{L^2_{xy}}^{\frac{2}{3}} \cdot K(t)^{\frac{4}{3}} + C(t) + C \, .
        \end{equation*}
        We substitute the revised estimate on the $L^{\infty}(L^2$)-norm of $u$ into the inequality for derivatives of $u$,
        \begin{align*}
             \| & u_x(t)\|_{L^2_{xy}}^2 + \|u_y(t)\|^2_{L^2_{xy}} \\
            &\le \dfrac{1}{8} \|u_y(t)\|_{L^2_{xy}}^2 + \dfrac{1}{8} \|u_y\|_{L^2_{xyT}}^2 + \left(\|u_x(t)\|_{L^2_{xy}} + \|u_y(t)\|_{L^2_{xy}}\right)^{p-2} \cdot \|u(t)\|_{L^2_{xy}}^2 + C(t) + C \\
            &\le \dfrac{1}{8} \|u_y(t)\|_{L^2_{xy}}^2 + \dfrac{1}{8} \|u_y\|_{L^2_{xyT}}^2 + C(t) + C \\
            & \quad + \dfrac{2\lambda}{p} \left(\|u_x(t)\|_{L^2_{xy}}^{p-2} + \|u_y(t)\|_{L^2_{xy}}^{p-2}\right) \cdot \left(\|u_y\|_{L^2_{xyT}} \cdot K(t) + \|u_y(t)\|_{L^2_{xy}}^{\frac{2}{3}} \cdot K(t)^{\frac{4}{3}} + C(t) + C \right) \\
            &= \dfrac{1}{8} \|u_y(t)\|_{L^2_{xy}}^2 + \dfrac{1}{8} \|u_y\|_{L^2_{xyT}}^2 + C(t) + C + \|u_x(t)\|_{L^2_{xy}}^{p-2} \|u_y(t)\|_{L^2_{xy}}^{\frac{2}{3}} \cdot K(t)^{\frac{4}{3}} \\
            & \quad + \|u_y(t)\|_{L^2_{xy}}^{\frac{3p-4}{3}} \cdot K(t)^{\frac{4}{3}} + \left(\|u_x(t)\|_{L^2_{xy}}^{p-2} + \|u_y(t)\|_{L^2_{xy}}^{p-2}\right) \|u_y\|_{L^2_{xyT}} \cdot K(t) \\
            & \qquad + \left(\|u_x(t)\|_{L^2_{xy}}^{p-2} + \|u_y(t)\|_{L^2_{xy}}^{p-2}\right) (C(t) + C)\, .
        \end{align*}
        It turns out that the uniform boundedness can be derived only when $p\le3$ in this case. If $p<3$, the above inequality gives
        \begin{equation*}
            \|u_x(t)\|_{L^2_{xy}}^2 + \|u_y(t)\|^2_{L^2_{xy}} \le c \int_0^t \left(\|u_x(\tau)\|_{L^2_{xy}}^2 + \|u_y(\tau)\|_{L^2_{xy}}^2 \right) \, d\tau + C(t) + C\, .
        \end{equation*}
        By the Gronwall's inequality
        \begin{equation*}
            \|u_x(t)\|_{L^2_{xy}}^2 + \|u_y(t)\|^2_{L^2_{xy}} \le (C(t) + C) e^{ct} \, .
        \end{equation*}
        If $p=3$, then
        \begin{equation*}
            (1-K(t)) \left(\|u_x(t)\|_{L^2_{xy}}^2 + \|u_y(t)\|^2_{L^2_{xy}}\right) \le c \int_0^t \left(\|u_x(\tau)\|_{L^2_{xy}}^2 + \|u_y(\tau)\|_{L^2_{xy}}^2 \right) \, d\tau + C(t) + C\, .
        \end{equation*}
        Since $K(t) = c \max \{\|h_1\|_{L^2_{xT}(\R \times [0,T])}, \|h_2\|_{L^2_{xT}(\R \times [0,T])} \} < \infty$. We can partition $[0,T]$ into a finite number of subintervals $(t_{j-1},t_j)$ for $j=1$, $\cdots$, $m$ and $m \in \N$ with $\sup_j |t_j-t_{j-1}| \sim \delta$ so that, on each interval, $\|h_1\|_{L^2_{xT}(\R \times [t_{j-1},t_j]}$ and $\|h_2\|_{L^2_{xT}(\R \times [t_{j-1},t_j]} < (1/{2c})$. Then, starting from $[0,\delta]$, we move over one subinterval and use $u(t_j)$ as the initial value for a new IBVP on $(t_j,t_{j+1})$. At the end, $\sup_{t\in[0,T]} \|u(t)\|_{H^1(\R \times [0,1]))}$ is uniformly bounded by a function depending upon the initial and boundary data only. In particular, we let $\psi_2 = (C + C(t)) \left((1/2) - K(t)\right)^{-1}$.
        Hence, if we let $\psi=\psi_1$ when $p\ge3$ for $\lambda<0$ and $\psi=\psi_2$ when $p=3$ as $\lambda>0$, $\sup_{t\in[0,T]} \|u(t)\|_{H^1(\R \times [0,1]))}$ is uniformly bounded by $\psi$. The proof is finished.
    \end{proof}

     Thus, \emph{Theorem~\ref{thm_4.3}} and \emph{Propositions~\ref{thm_4.4}} and \emph{\ref{prop_5.2}} imply the following theorem.
    \begin{thm}
        Assume that either $p\ge 3$ and $\lambda<0$ or $p=3$ and $\lambda>0$. Then, (\ref{2.1}) is globally well-posed in $H^1(\R\times[0,1])$ if $\varphi\in H^1(\R \times [0,1])$ and $h_j\in H^1_{t\text{-loc}} \left(\R; \, L^2_x(\R)\right) \cap L^2_{t\text{-loc}} \left(\R; \, H^2_x(\R)\right)$ for $j=1$, $2$.
    \end{thm}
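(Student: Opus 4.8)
The plan is to deduce global existence from the local theory of Section~\ref{sec_4} together with the \emph{a priori} bound of Proposition~\ref{prop_5.2}, via the blow-up alternative of Theorem~\ref{thm_4.4}. First I would check that the data in the statement are admissible for the local theory. For any finite $T$ one has, by comparing Fourier weights,
\[
\bigl(1+|\lambda|+|\xi|\bigr)\bigl(1+|\lambda|+|\xi|^2\bigr)\ \lesssim\ \bigl(1+|\lambda|+|\xi|^2\bigr)^2\ \sim\ (1+|\lambda|)^2+(1+|\xi|^2)^2 ,
\]
since $|\xi|\le 1+|\xi|^2$; hence $H^1_{t}(\R;L^2_x)\cap L^2_{t}(\R;H^2_x)\hookrightarrow\HH^1(0,T)$, so $h_1,h_2\in\HH^1(0,T)$ with norms bounded on $[0,T]$ by $\|h_j\|_{H^1_t(L^2_x)}+\|h_j\|_{L^2_t(H^2_x)}$. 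The same hypotheses give $h_j\in H^1(\R\times[0,T])$ and, by the two-dimensional Sobolev embedding $H^1(\R\times[0,T])\hookrightarrow L^q$ for every finite $q$, also $h_j\in L^p(\R\times[0,T])$ for every $p<\infty$, so the hypotheses of Proposition~\ref{prop_5.2} are met. Thus Theorem~\ref{thm_4.3}(a) with $s=1$ applies and, together with Theorem~\ref{thm_4.4}, produces a unique solution $u^\ast$ on a maximal interval $[0,T_{\max})$, belonging to $\X^1_{T'}$ for every $T'<T_{\max}$, with $\|u^\ast(t)\|_{H^1(\R\times[0,1])}\to\infty$ as $t\uparrow T_{\max}$ whenever $T_{\max}<\infty$.

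The heart of the matter is to rule out $T_{\max}<\infty$. Fix an arbitrary $T>0$ and suppose, for contradiction, that $T_{\max}\le T$. For each $T'\in(0,T_{\max})$ the restriction $u^\ast|_{[0,T']}$ belongs to $C_t([0,T'];H^1_{xy}(\R\times[0,1]))$ and solves (\ref{2.1}), so Proposition~\ref{prop_5.2} gives
\[
\sup_{t\in[0,T']}\|u^\ast(t)\|_{H^1(\R\times[0,1])}\ \le\ \psi\!\left(\|\varphi\|_{H^1(\R\times[0,1])}+\|h_1\|_{H^1(\R\times[0,T'])}+\|h_2\|_{H^1(\R\times[0,T'])}\right).
\]
Since $\psi$ is nondecreasing and $\|h_j\|_{H^1(\R\times[0,T'])}\le\|h_j\|_{H^1(\R\times[0,T])}<\infty$, the right-hand side is bounded above by a constant that does not depend on $T'<T_{\max}$; letting $T'\uparrow T_{\max}$ yields $\sup_{t\in[0,T_{\max})}\|u^\ast(t)\|_{H^1(\R\times[0,1])}<\infty$, contradicting Theorem~\ref{thm_4.4}. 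Hence $T_{\max}>T$, and since $T>0$ was arbitrary the solution extends to $C([0,T];H^1(\R\times[0,1]))$ for every $T$; uniqueness and continuous dependence on $(\varphi,h_1,h_2)$ on each $[0,T]$ follow from the corresponding results of Section~\ref{sec_4} applied on successive subintervals. This is precisely global well-posedness in the sense of Definition~\ref{WP_defn}.

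The step requiring the most care — and the main obstacle — is the rigorous validity of Proposition~\ref{prop_5.2} for a merely $H^1$ solution, since the multiplier identities (\ref{5.1})--(\ref{5.5}) are derived under a smoothness assumption and the integrations by parts in $y$ and $t$ must be legitimate. I would handle this by approximation: choose smooth data $(\varphi_n,h_{1,n},h_{2,n})$ satisfying the natural compatibility conditions and converging to $(\varphi,h_1,h_2)$ in $H^1(\R\times[0,1])\times\HH^1(0,T)\times\HH^1(0,T)$ (and with $h_{j,n}\to h_j$ in $H^1(\R\times[0,T])$), for which the solutions $u_n$ are regular enough (e.g.\ in $C([0,T];H^2)\cap C^1([0,T];L^2)$, cf.\ Remark~\ref{remark_2.1}) that the identities hold pointwise; apply the \emph{a priori} estimate to each $u_n$, obtaining a bound controlled by $\psi$ evaluated at the \emph{uniformly} bounded data norms; and pass to the limit using the continuous-dependence theorem of Section~\ref{sec_4} to transfer the bound to $u$. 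Once this density argument is in place, the theorem follows by the assembly of Theorem~\ref{thm_4.3}, Theorem~\ref{thm_4.4} and Proposition~\ref{prop_5.2} described above.
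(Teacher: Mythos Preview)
Your proposal is correct and follows exactly the route the paper intends: the paper does not give a separate proof but simply records the theorem as the conjunction of Theorem~\ref{thm_4.3}, Theorem~\ref{thm_4.4} and Proposition~\ref{prop_5.2}. Your write-up fleshes out precisely the two details the paper leaves implicit---the embedding $H^1_t(L^2_x)\cap L^2_t(H^2_x)\hookrightarrow\HH^1(0,T)\cap H^1(\R\times[0,T])$ needed to invoke the local theory and the \emph{a priori} bound, and the density/approximation argument justifying Proposition~\ref{prop_5.2} for merely $H^1$ solutions---so there is nothing to correct.
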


    \bigskip
    \noindent{\bf Acknowledgements.} The authors were partly supported by National Science Foundation under grant No. DMS-1210979. We sincerely thank an anonymous referee for
a careful reading of the manuscript and many helpful comments, corrections and suggestions.


\begin{thebibliography}{9}

\bibitem{Adams} {R. A. Adams}, \emph{Sobolev Spaces},
{Second edition}, {Academic Press},
 {New York},  {2003}.

\bibitem{Aud_Disp} {C. Audiard}, {Non-homogeneous boundary value problems for linear
dispersive equations},   \emph{Comm. Partial Diff. Equations.} {\bf 37} (2012), {1--37}.

\bibitem{Aud_se_nbvp} {C. Audiard}, {On the non-homogeneous boundary value problem for Sch\"dingier equations},
 \emph{Disc. Cont. Dyna. Syst. ser. A} {\bf 33}\,(2013), {
 3861--3884}.

\bibitem{Sun_NLSE_1d}
{J. L. Bona, S. M. Sun and B.-Y. Zhang},
        {Nonhomogeneous boundary value problems of one-dimensional nonlinear {Schr\"odinger} equation, preprint}.

\bibitem{bsz-1} J. L. Bona, S.-M. Sun and B.-Y. Zhang,
A nonhomogeneous boundary-value problem for the Korteweg-de Vries
equation in a quarter plane, \emph{Trans.  American Math. Soc.} {\bf
354 } (2001), 427--490.


\bibitem{Sun_CUC_Evo}
    {J. L. Bona, S. M. Sun and B.-Y. Zhang},
    {Conditional and unconditional well-posedness for nonlinear evolution equations},
  \emph{Adv. Diff. Equ.} {\bf 9} {(2004)}, {241--265}.

\bibitem{Bourgain_Exp_NLSE}
   {J. Bourgain}, {Exponential sums and nonlinear {Schr\"odinger} equations},
   \emph{Geom. and Func. Anal.}
 {\bf 3} {(1993)}, {157--178}.

\bibitem{Bourgain_FrRc_NLSE}
{J. Bourgain},
 {Fourier transform restriction phenomena for certain lattice subsets and applications to nonlinear evolution equations: {Schr\"odinger} equation},
  \emph{Geom. and Func. Anal.} {\bf 3} {(1993)}, {107--156}.


\bibitem{Bourgain_NLSE_Periodic_IM}
  {J. Bourgain},
 {Periodic nonlinear {Schr\"odinger} equation and invariant measures},
  \emph{Comm. Math. Phys.} {\bf 166}
{(1994)},
 {1--26}.

 \bibitem{Bourgain_NLSE_Glb}
  {J. Bourgain},
 \emph{Global Solutions of Nonlinear Schr\"odinger Equations},
 {Colloquim Publication}, {American Mathematical Society},
{1999}.


 \bibitem{Gallouet_NLSE}
     {H. Brezis and T. Gallouet},
     {Nonlinear {Schr\"odinger} evolution equations},
        \emph{Nonl. Anal. TMA}
    {\bf 4}    {(1980)},
 {677--681}.


\bibitem{Bu_NLSE_IB_Cubic_PDE}
        {C. Bu},
         {Solution of the forced nonlinear {Schr\"odinger} equation {(NLS)} using PDE techniques},
         \emph{Appli. Anal.}
       {\bf 41}  {(1991)},
        {33--51}.


\bibitem{Bu_NLSE_IB_WP}
      {C. Bu},
     {On Well-posedness of the forced nonlinear {Schr\"odinger} equation},
    \emph{Appl. Anal.}  {\bf 46}
   {(1992)},
 {219--239}.


\bibitem{Bu_NLSE_IB_Rbn}
     {C. Bu},
   {An initial-boundary value problem of the nonlinear {Schr\"odinger} equation},
      \emph{Appl. Anal.} {\bf 53}
   {(1994)},
    {241--254}.


\bibitem{Bu_NLSE_Cubic_Gen}
    {C. Bu},
     {Generalized solutions to the cubic {Schr\"odinger} equation},
   \emph{Nonl. Anal. TMA}   {\bf 27}
   {(1996)},
 {769--774}.


\bibitem{Bu_NLSE_IB_WP_DC_BU}
     {C. Bu, R. Shull, H. Wang and M. Chu},
   {Well-posedness, decay estimates and blow-up theorem for the forced {NLS}},
     \emph{J. Partial Diff. Equations} {\bf 14}
 {(2001)},
 {61--70}.


\bibitem{Bu_NLSE_IB_IH_Diri}
 {C. Bu, K. Tsutaya and C. Zhang},
 {Nonlinear {Schr\"odinger} equation with inhomogeneous {Dirichlet} boundary data},
   \emph{J.  Math. Phys.}  {\bf 46}
{(2005)}, 083504.

\bibitem{Cazenave_NLSE}
  {T. Cazenave},
  \emph{Semilinear {Schr\"odinger} Equations (Courant Lecture Notes)},
{Courant Institute of Mathematical Sciences 10}, {American Mathematical Society},
 {Providence, R.I.},  {2003}.

\bibitem{Caz_CD_frac}
 {T. Cazenave, D. Fang and Z. Han},
 {Continuous dependence for {NLS} in fractional order spaces},
  \emph{Annales de l'Institut Henri Poincar\'e - Analyse Non Lin\'eaire}
 {\bf 28}  {(2011)},
{135--147}.


\bibitem{Caz_Wei_NLSE_CP_crt_Hs}
    {T. Cazenave and F. Weissler},
    {The {Cauchy} problem for the critical nonlinear {Schr\"odinger} equation in {$H^s$}},
   \emph{Nonl. Anal. TMA}
 {\bf 14}  {(1990)},
 {807--836}.



\bibitem{Weinstein_KdV_DP_SA}
  {F. Christ and M. Weinstein},
 {Dispersion of small amplitude solutions of the generalized {Korteweg-de Vries} equation},
    \emph{J.  Func. Anal.}
  {\bf 100}  {(1991)},
   {87--109}.


\bibitem{Fang_NLSE_CP_Hs}
  {D. Fang and Z. Han},
   {On the well-posedness for NLS in {$H^s$}},
     \emph{J.  Func. Anal.}
    {\bf 264}    {(2013)},
     {1438--1455}.


\bibitem{Ginibre_Periodic_semilin}
   {J. Ginibre},
   {The {Cauchy} problem for periodic semilinear {PDE} in space variables},
   \emph{S\'eminaire Bourbaki}
  {\bf 796}   {(1995)},
    {163--187}.

\bibitem{Ginibre_CP_G}
   {J. Ginibre and G. Velo},
     {On a class of nonlinear {Schr\"odinger} equations. {I}. The {Cauchy} problem, general case},
   \emph{J.  Func. Anal.}
    {\bf 32}   {(1979)},
     {1--32}.


\bibitem{Holmer_NLSE_1d}
    {J. Holmer},
      {The Initial-boundary-value problem for the 1D nonlinear {Schr\"odinger} equation on the half-line},
    \emph{Diff. Integral Equations}
 {\bf 18}   {(2005)},
     {647--668}.



\bibitem{Ion_NLS_GLB_RT3}
    {A. Ionescu},
     {Global well-posedness of the energy-critical defocusing {NLS on $\mathbb{R}\times\mathbb{T}^3$}},
   \emph{Comm.  Math. Phys.}
   {\bf 312}     {(2012)},
       {781--831}.


\bibitem{Ion_NLS_T3}
   {A. Ionescu and B. Pausader},
   {The energy-critical defocusing {NLS} on $\mathbb{T}^3$},
   \emph{Duke Math. J.}
 {\bf 161}  {(2012)},
         {1581--1611}.


\bibitem{Kamvissis_IST_HL}
    {S. Kamvissis},
    {Semicalssical nonlinear {Schr\"odinger} on the half line},
    \emph{J.  Math. Phys.}
  {\bf 44}   {(2003)},
        {5849--5868}.


\bibitem{Kato_Perturb}
   {T. Kato},
    \emph{Perturbation Theory for Linear Operators},
   {2nd edition},{Springer-Verlag}, {New York},
 {1976}.



\bibitem{Kato_NLSE}
    {T. Kato},
 {On nonlinear {Schr\"odinger} equations},
    \emph{Annales de l'Institut Henri Poincar\'e - Physique Th\'eorique}
    {\bf 46}   {(1987)},
       {113--129}.


\bibitem{Kato_NLSE_lec}
{T. Kato},
 {Nonlinear {Schr\"odinger} equations},
 \emph{{Schr\"odinger} Operators}, {Lecture Notes in Physics}, Vol. {345}  {(1989)},
{Springer}, {New York}, {pp. 218--263}.


\bibitem{Kato_NLSE_UC_Hs}
  {T. Kato},
  {On nonlinear {Schr\"odinger} equations {II} {$H^s$}-solutions and unconditional well-posedness},
    \emph{J. d'Anal. Math.}
 {\bf 67}   {(1995)},
     {281--306}.


\bibitem{Kato_E_NS}
     {T. Kato and G. Ponce},
  {Commutator estimates and the {Euler and Navier-Stokes} equations},
  \emph{Annales de l'Institut Henri Poincar\'e - Physique Th\'eorique}
  {\bf 46}   {(1987)},
   {113--129}.



\bibitem{Kenig_KdV_Gen}
    {C. Kenig, G. Ponce and L. Vega},
    {On the (generalized) {Korteweg-de Vries} equation},
    \emph{Duke Math. J.}
    {\bf 59} {(1989)}, {585--610}.

\bibitem{LM1972-2}  J. L. Lions and E. Magenes,  \emph{ Non-Homogeneous Boundary Value Problems and Applications, Vol. 2}, Springer-Verlag, Berlin-Heidelberg-New York, 1972.


\bibitem{Ran_NLSE_2d_uhp}
    {Y. Ran, S. M. Sun and B.-Y. Zhang},
    {Nonhomogeneous boundary value problems of nonlinear {Schr\"odinger} equations in a half plane},
    {preprint}.

\bibitem{Bu_NLSE_IB_IH}
    {W. Strauss and C. Bu},
    {An inhomogeneous boundary value problem for nonlinear {Schr\"odinger} equations},
    \emph{J. Diff. Equations}
    {\bf 173} {(2001)}, {79--91}.

\bibitem{Stri_Frr_wave}
    {R. Strichartz},
    {Restrictions of {Fourier} transforms to quadratic surfaces and decay of solutions of wave equations},
    \emph{Duke Math. J.}
    {\bf 44} {(1977)}, {705-714}.

\bibitem{Takaoka_NLSE_2d_Periodic}
    {H. Takaoka and N. Tzvetkov},
    {On 2D nonilinear {Schr\"odinger} equations with data on {$\R \times \T$}},
    \emph{J. Func. Anal.}
    {\bf 182} {(2001)}, {427--442}.

\bibitem{TsutsumiM_NLSE_IB_H0_2d}
    {M. Tsutsumi},
    {On smooth solutions to the initial-boundary value problem for the nonlinear {Schr\"odinger} equations in two space dimensions},
    \emph{Nonl. Anal. TMA}
    {\bf 13} {(1989)}, {1051--1056}.


\bibitem{TsutsumiM_NLSE_IB_GLB_Nd}
     {M. Tsutsumi},
   {On global solutions to the initial-boundary value problem for the nonlinear {Schr\"odinger} equations in exterior domains},
    \emph{Comm. Math. Phys.}
  {\bf 16}   {(1991)},
      {885--907}.


\bibitem{TsutsumiY_NLSE_L2}
    {Y. Tsutsumi},
    {{$L^2$}-solutions for nonlinear {Schr\"odinger} equations and nonlinear groups},
    \emph{Funkcialaj Ekvacioj}
    {\bf 30}     {(1987)},
       {115--125}.


\bibitem{TsutsumiY_NLSE_H2}
   {Y. Tsutsumi},
   {Global solutions of the nonlinear {Schr\"odinger} equation in exterior domains},
    \emph{Comm. Partial Diff. Equations}
    {\bf 8}  {(1983)},
        {1337--1374}.

\end{thebibliography}

\addcontentsline{toc}{section}{References}

\end{document}